\numberwithin{figure}{section}
\numberwithin{table}{section}
\DeclareMathOperator{\sech}{sech}
\DeclareMathOperator{\csch}{csch}
\theoremstyle{plain}
\newcommand{\norm}[1]{\left\Vert#1\right\Vert}
\newcommand{\R}{\mathbb R}
\newcommand{\Z}{\mathbb{Z}}
\newcommand{\F}{\mathcal{F}}
\newcommand{\pl}{\partial}
\newtheorem{theorem}{Theorem}[section]
\newtheorem{proposition}[theorem]{Proposition}
\newtheorem{lemma}[theorem]{Lemma}
\newtheorem{definition}[theorem]{Definition}
\newtheorem{remark}[theorem]{Remark}
\numberwithin{equation}{section}     
\numberwithin{figure}{section}
\numberwithin{table}{section}
\newcounter{asnr}
\ifnum\value{asnr}=0 \stepcounter{asnr} 
\newcounter{defnr}
\ifnum\value{defnr}=0 \stepcounter{defnr} 
\newcommand{\vertiii}[1]{{\left\vert\kern-0.25ex\left\vert\kern-0.25ex\left\vert #1 
\right\vert\kern-0.25ex\right\vert\kern-0.25ex\right\vert}}
\numberwithin{equation}{section} \allowdisplaybreaks
\title[ Convergence OF CNGS for the fractional KdV]
{Stability and Convergence analysis of a Crank-Nicolson Galerkin scheme for the fractional Korteweg-de Vries equation}
\date{}
\author[M. Dwivedi]{Mukul Dwivedi}
\address{
Department of Mathematics, 
	Indian Institute of Technology Jammu,
	Jagti, NH-44 Bypass Road, Post Office Nagrota,
	Jammu - 181221, India}
\email[]{mukul.dwivedi@iitjammu.ac.in}
\author[T. Sarkar]{Tanmay Sarkar}
\address{
	Department of Mathematics, 
	Indian Institute of Technology Jammu,
	Jagti, NH-44 Bypass Road, Post Office Nagrota,
	Jammu - 181221, India}
\email[]{tanmay.sarkar@iitjammu.ac.in}
\subjclass[2020]{Primary: 35Q53, 65M60; Secondary: 65M12.}
\keywords{Fractional Korteweg-de Vries equation, fractional Laplacian, fractional Sobolev spaces
}
\thanks{}
\begin{document}
\begin{abstract}
In this paper we study the convergence of a fully discrete Crank-Nicolson Galerkin scheme for the initial value problem associated with the fractional Korteweg-de Vries (KdV) equation, which involves the fractional Laplacian and non-linear convection terms.
Our proof relies on the \emph{Kato type} local smoothing effect to estimate the localized $H^{\alpha/2}$-norm of the approximated solution, where $\alpha \in [1,2)$. We demonstrate that the scheme converges strongly in $L^2(0,T;L^2_{loc}(\mathbb{R}))$ to a weak solution of the fractional KdV equation provided the initial data in $L^2(\mathbb{R})$. Assuming the initial data is sufficiently regular, we obtain the rate of convergence for the numerical scheme. Finally, the theoretical convergence rates are justified numerically through various numerical illustrations.
\end{abstract}

\maketitle

\section{Introduction}\label{sec1}
This work is concerned with the Cauchy problem associated to a nonlinear, non-local dispersion equation known as a fractional KdV equation
\begin{equation}\label{fkdv}
\begin{cases}
      u_t+\left(\frac{u^2}{2}\right)_x-(-\Delta)^{\alpha/2}u_x=0,  \qquad & (x,t) \in \mathbb{R}\times (0,T],\\
     u(x,0) = u_0(x), \qquad & x \in \mathbb{R},
\end{cases}
\end{equation}
where $T>0$ is fixed, $u_0$ is the prescribed initial condition and $u:\R\times [0,T]\rightarrow \R$ is the unknown.
The non-local operator $-(-\Delta)^{\alpha/2}$ in \eqref{fkdv} is the fractional Laplacian with the values $\alpha\in[1,2)$, defined  for all $\phi\in C_{c}^{\infty}(\mathbb{R}^d)$  by \cite{Hitchhiker}
 \begin{equation}\label{fracL}
     -(-\Delta)^{\alpha/2}[\phi](x) = c_\alpha \text{P.V.}\int_{\mathbb{R}} \frac{\phi(y)-\phi(x)}{|y-x|^{1+\alpha}}\,dy,
 \end{equation}
 for some constant $c_\alpha>0$ which is described in \cite{TanmayS, MonteCarlo}. Moreover, the non-local operator can also be defined through the Fourier transform as
 \begin{equation}\label{fracLF}
     \mathcal{F}[{-(-\Delta)^{\alpha/2}u](\xi)} = |\xi|^\alpha \mathcal{F}[u](\xi),\quad \alpha\in [1,2),
 \end{equation}
 where $\mathcal{F}[u]$ denotes the Fourier transform of $u$. For $\alpha=2$, \eqref{fkdv} reduces to the well-known KdV equation \cite{HOLDEN1999, KATO, Kenig, KdV,Ponce} and whenever $\alpha=1$, \eqref{fkdv} represents the Benjamin-Ono (BO) equation \cite{RajibBO, RDOS, Galtungc,Ponce, TaoBO, Vmurty} which was derived to model the weakly nonlinear internal long waves. In general, the equation \eqref{fkdv} occurs in the study of nonlinear dispersive long waves, inverse scattering method and plasma physics, see \cite{Bonaf, TanmayS, BO_ponce, Kenig, Saut} and references therein. The non-local operators such as fractional Laplacian \eqref{fracL}, have proven to be highly efficient tools in localized computations such as image segmentation, water flow in narrow channels, plasma physics, and other related applications, for more details, refer to \cite{podlubny1998fractional, Hitchhiker} and references therein.

The local and global well-posedness of \eqref{fkdv} have been studied by several authors in recent times. In particular, for $\alpha=2$, well-posedness of \eqref{fkdv} has been studied by several authors in the last three decades, for instance, see \cite{KATO, Ponce, Pilod} and finally global well-posedness in $H^{-1}(\mathbb{R})$ was established by 
Killip et al. \cite{Visan}. Moreover,  in case of $\alpha=1$, the global well-posedness of \eqref{fkdv} in \(H^s(\mathbb{R})\), \(s\geq0\) is proved in \cite{Kenig_wp_BO, TaoBO}. For further discussion on global well-posedness, one can refer to \cite{molinet2012cauchy}. In case of $\alpha\in (1,2)$, the local well-posedness has been established for $H^s(\R),~s\geq 0$ in \cite{Kenig, BO_ponce}. However, the approach of local well-posedness differs significantly from the KdV equation in which contraction principle plays a crucial role. Due to high-low frequency interaction with the nonlinearity, one needs to use the compactness arguments based on a priori estimates on the solution and smoothing effect to establish the local well-posedness of \eqref{fkdv} for $\alpha\in [1,2)$ (see \cite{BO_ponce, VeloS, Vento}). Using the frequency dependent renormalization technique, Herr et al. \cite{FKDV_WP_L2} proved the well-posedness in $L^2(\R)$ for all the values of $\alpha\in (1,2)$. 

Several numerical methods have been developed for \eqref{fkdv} in recent years. In particular, for $\alpha=2$, Sjoberg \cite{SJOBERG} developed a semi-discrete scheme and analyzed the convergence of approximated solution. Due to the requirements of finer grids and reduction in computational cost, many authors have looked for more efficient fully discrete schemes. For instance, Holden et al. \cite{FullydiscreteKDV} designed a fully discrete finite difference scheme and Dutta et al. \cite{CNKDV} proved the convergence of a Crank-Nicolson Galerkin scheme after proposing a higher order scheme in space \cite{dutta2015convergence} for the KdV equation. In a similar manner, for $\alpha=1$, \eqref{fkdv} has been investigated numerically by several authors. A convergent finite difference scheme is developed in \cite{RajibBO} and Galtung \cite{Galtungc, CRGALTUNG} designed a fully discrete Galerkin scheme for the BO equation.
However, there is a limited literature concerning the numerical framework specifically for the fractional KdV equation \eqref{fkdv} with $\alpha\in(1, 2)$. The equation \eqref{fkdv} exhibits two competing effects that contribute to the difficulties encountered in the numerical approximation process. The inclusion of the nonlinear convective term results in the emergence of infinite gradients in finite time, even for smooth initial data. 
Additionally, the presence of the non-local dispersive term produces dispersive waves that are hard to compute with high accuracy and efficiency. Consequently, due to the combined effects of the nonlinear convective term and the dispersive term, designing an accurate and efficient numerical framework for \eqref{fkdv} remains a highly intricate task. Even though there is an operator splitting scheme is studied in \cite{TanmayS}, there is a need for the convergent fully discrete scheme of \eqref{fkdv}.

In this paper, we propose a fully discrete scheme for \eqref{fkdv}. More precisely, the main ingredients of this paper are enlisted below:
\begin{enumerate}
    \item In order to develop an efficient numerical scheme, motivated by the work of \cite{CNGALTUNG, RajibBO} for the BO equation and KdV equation respectively, we look for a higher order approximation in space and second-order fully implicit time steeping scheme. We design a fully discrete Crank-Nicolson type Galerkin scheme for the fractional KdV equation \eqref{fkdv}.
    \item Our goal is to ensure the existence of a sequence of discretized solutions which converge locally to a weak solution of \eqref{fkdv}. This convergence result is established for the low regular initial data \(L^2(\mathbb{R})\). 
    Following the techniques in Kato \cite{KATO}, we make use of intrinsic local smoothing effect of the equation, i.e $u(\cdot,t)\in H^{\alpha/2}(\R)$ $\forall$ $t$. It is worth mentioning that the Kato type smoothing effect exhibited by the fractional KdV equation is stronger than that of the BO equation, but weaker than the KdV equation. This, coupled with the non-local dispersive term, justifies the inclusion of fractional Sobolev spaces, leading to more intricate estimates compared to the case of the KdV equation. 
    \item We investigate the theoretical convergence rates under certain assumptions on the initial data. The obtained rates are further justified by the various numerical illustrations. The real solutions of IVP \eqref{fkdv} posses mainly three conserved quantities:
    \begin{align*}
        C_1(u): & = \int_{\R} u(x,t)~dx, \quad C_2(u):= \int_{\R} u^2(x,t)~dx,\\
         C_3(u):&= \int_{\R} \left((\mathcal{D}^{\alpha/2}u)^2 - \frac{u^3}{3}\right)(x,t)~dx,
    \end{align*}
 where $\mathcal{D}$ is given by $\mathcal{D}:= (-\Delta)^{1/2}$.
A numerical scheme which preserves these conserved quantities are considered to be more accurate. We show that the proposed numerical scheme conserves a discrete version of these quantities.
\end{enumerate}
 
The rest of the paper is organized as follows: In Section \ref{sec2}, we establish some preliminary estimates involving non-local dispersion pertaining to a partially discretized weak formulation of equation \eqref{fkdv}. In addition, we demonstrate the Kato type local smoothing effect. In Section \ref{sec3}, we propose a fully discrete scheme. Since the scheme is implicit in nature, we need to ensure the solvability at each time step. In Section \ref{sec4}, We show the convergence of the scheme to a weak solution of \eqref{fkdv} provided the initial data belongs to $L^2(\R)$. We investigate the theoretical convergence rates in Section \ref{sec5}. Finally, we verify our theoretical findings through several numerical illustrations in Section \ref{sec6}.
\section{Preliminary estimates and local smoothing effect}\label{sec2}
In this section, we shall provide necessary ingredients of our approach. To begin with, we momentarily define a weak solution of the fractional KdV equation \eqref{fkdv} to be a function $u(x,t)\in C^1([0,\infty);H^{1+\alpha}(\mathbb{R}))$ satisfies the following integral formulation:
\begin{equation}\label{WeakF}
\langle u_t,v\rangle + \left\langle\left(\frac{u^2}{2}\right)_x,v\right\rangle + \langle \mathcal{D}^{\alpha}u,v_x\rangle = 0
\end{equation}
for all $v\in H^{1+\alpha}(\mathbb{R})$, where the notation $\langle\cdot,\cdot\rangle$ denotes the usual $L^2$-inner product. 

We plan to discretize the equation \eqref{WeakF} in time using the Crank-Nicolson method. Assuming the time step size to be $\Delta t$, we approximate $u(\cdot,t_n)$ as $u^{n}$, where $t_n = n\Delta t$ and $n$ is a non-negative integer. We introduce the notation: $u^{n+\frac{1}{2}}:= (u^{n+1} + u^n)/2$ and $t_{n+1/2} = (t_n+t_{n+1})/2$. Furthermore, given $u^0$, we define $u^n$ to be the solution of
\begin{equation}\label{CN}
\langle u^{n+1},v\rangle + \Delta t\left\langle\left(\frac{(u^{n+\frac{1}{2}})^2}{2}\right)_x,v\right\rangle + \Delta t\langle \mathcal{D}^{\alpha}u^{n+\frac{1}{2}},v_x\rangle = \langle u^n,v\rangle,
\end{equation}
for any $v\in H^{1+\alpha}(\mathbb{R})$ and $n\geq 0$. Assuming the existence of a unique solution $u^{n+1}$ for the aforementioned equation, we can choose $v=u^{n+1}+u^n$ in \eqref{CN}, leading to
\begin{equation}\label{L2Bound}
\norm{u^{n+1}}^2_{L^2(\mathbb{R})} = \norm{u^n}^2_{L^2(\mathbb{R})} = \norm{u^0}^2_{L^2(\mathbb{R})},
\end{equation}
which further implies
\begin{equation}\label{L2Bound_1}
\norm{u^{n+1/2}}^2_{L^2(\mathbb{R})} \leq  \norm{u^0}^2_{L^2(\mathbb{R})}.
\end{equation}
To derive the above estimates, we have taken into account the following lemma.
\begin{lemma}\label{lemma1}
Let $\mathcal{S}(\R)$ denote the Schwartz space. Then the Fractional Laplacian $ \mathcal{D}^{\alpha}$ with $\alpha\in[1,2)$ on $\mathbb{R}$ defined by \eqref{fracL} and \eqref{fracLF} is a linear operator with the following properties: 
 \begin{enumerate}[label=(\roman*)]
	\item \label{itm2.1}(Symmetric) Assume $f,g \in \mathcal{S}(\mathbb{R})$. Then there holds 
		\[  (\mathcal{D}^{\alpha} f, g) = (f,\mathcal{D}^{\alpha}g).\]
		(An integration by parts formula for fractional Laplacian).
\item \label{itm2.2}(Translation invariant) The Fractional Laplacian commutes with derivatives i.e., for the differentiable functions $f$ and $g$, we have
		\[ (\mathcal{D}^{\alpha}f_x,g) = -(\mathcal{D}^{\alpha}f,g_x) = ((\mathcal{D}^{\alpha}f)_x,g).\]
\item \label{itm2.3}The Fractional Laplacian satisfies
		\[  (\mathcal{D}^{\alpha}f_x,f) = 0.\]
	\item \label{itm2.4}(Semi-group property of fractional Laplacian) Assume $f \in \mathcal{S}(\mathbb{R})$ and $\alpha_1, \alpha_2 >0$, then there holds 
		\[  \mathcal{D}^{\alpha_1}\mathcal{D}^{\alpha_2} f = \mathcal{D}^{\alpha_1+\alpha_2}f.\]
	\item  \label{itm2.5}  $\|f\|^2_{H^{s}(\mathbb{R})} = \|\mathcal{D}^{s}f\|^2_{L^2(\mathbb{R})}$,
		where $s\in(0,1)$.
\end{enumerate}
\end{lemma}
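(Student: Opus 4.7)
The plan is to reduce every assertion to the Fourier-multiplier identity $\F[\D^{\alpha}u](\xi)=|\xi|^{\alpha}\F[u](\xi)$ from \eqref{fracLF}, together with Plancherel's theorem. Since $f,g\in\mathcal{S}(\R)$ and the symbol $|\xi|^{\alpha}$ is locally integrable (and only polynomially growing at infinity), all manipulations on the Fourier side are legal, and Schwartz-decay makes every arising boundary term at $\pm\infty$ vanish.

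For \ref{itm2.1}, write $(\D^{\alpha}f,g)=\int_{\R}|\xi|^{\alpha}\F[f](\xi)\,\overline{\F[g](\xi)}\,d\xi$ and redistribute the real, non-negative factor $|\xi|^{\alpha}$ onto the other slot. For \ref{itm2.2}, combine $\F[f_x](\xi)=i\xi\,\F[f](\xi)$ with \eqref{fracLF}: both $\D^{\alpha}f_x$ and $(\D^{\alpha}f)_x$ carry the same symbol $i\xi|\xi|^{\alpha}$, giving the rightmost equality, while the leftmost one follows by combining \ref{itm2.1} with ordinary integration by parts. For \ref{itm2.3}, iterate the previous two items: $(\D^{\alpha}f_x,f)=-(\D^{\alpha}f,f_x)=-(f,\D^{\alpha}f_x)=-(\D^{\alpha}f_x,f)$, which forces the inner product to be zero; equivalently, the Fourier-side integrand $i\xi|\xi|^{\alpha}|\F[f](\xi)|^{2}$ is odd (for real $f$, $|\F[f]|^{2}$ is even) and so integrates to zero.

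For \ref{itm2.4}, multiply the symbols: $|\xi|^{\alpha_{1}}|\xi|^{\alpha_{2}}=|\xi|^{\alpha_{1}+\alpha_{2}}$, hence $\F[\D^{\alpha_{1}}\D^{\alpha_{2}}f]=\F[\D^{\alpha_{1}+\alpha_{2}}f]$, and one inverts the Fourier transform. For \ref{itm2.5}, Plancherel yields $\|\D^{s}f\|_{L^{2}(\R)}^{2}=\int_{\R}|\xi|^{2s}|\F[f](\xi)|^{2}\,d\xi$, which is precisely the (homogeneous) $H^{s}$-norm convention used in the paper.

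I do not anticipate a genuine obstacle: each item is an elementary manipulation of Fourier multipliers on $\mathcal{S}(\R)$. The only point deserving a line of care is the norm convention in \ref{itm2.5}, where one should make explicit that $\|\cdot\|_{H^{s}}$ is being used in its homogeneous form $\|\D^{s}\cdot\|_{L^{2}}$ rather than the inhomogeneous form $(\|\cdot\|_{L^{2}}^{2}+\|\D^{s}\cdot\|_{L^{2}}^{2})^{1/2}$; the latter would require an extra $L^{2}$-term on the right.
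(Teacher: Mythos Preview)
Your proposal is correct and follows essentially the same route as the paper: both reduce each item to the Fourier-multiplier identity \eqref{fracLF} together with Plancherel, with \ref{itm2.3} derived as an immediate consequence of \ref{itm2.1} and \ref{itm2.2}. The only cosmetic differences are that the paper cites the distributional definition for \ref{itm2.1} and refers to \cite[Proposition~3.6]{Hitchhiker} for \ref{itm2.5}, whereas you do both directly via Plancherel; your remark on the homogeneous norm convention in \ref{itm2.5} is well taken.
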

\begin{proof}
	\ref{itm2.1} directly follows from the distributional definition \cite{10Definition} of fractional Laplacian. \ref{itm2.2} will be followed by using the Plancherel theorem, i.e.
	\begin{equation*}
	\begin{split}
	\int_\mathbb{R} \mathcal{D}^\alpha f_x(x)g(x)\,dx &= \int_{\mathbb{R}} \mathcal{F}[\mathcal{D}^\alpha f_x](\xi)\overline{\F[g](\xi)}\,d\xi\\
	& = \int_{\mathbb{R}}-i \xi|\xi|^\alpha \F[f](\xi)\overline{\F[g](\xi)}\,d\xi = \int_\mathbb{R} (\mathcal{D}^\alpha f(x))_xg(x)\,dx\\
	&=\int_{\mathbb{R}}|\xi|^\alpha \F[f](\xi)\overline{i \xi\F[g](\xi)}\,d\xi\\
	&= -\int_{\mathbb{R}} \mathcal{F}[\mathcal{D}^\alpha f](\xi)\overline{\mathcal{F}({g_x})(\xi)}\,d\xi
	= -\int_\mathbb{R} \mathcal{D}^\alpha f(x)g_x(x)\,dx.
	\end{split}
	\end{equation*}
	\ref{itm2.3} is a consequence of the properties \ref{itm2.1} and \ref{itm2.2} as follows:
	\begin{equation*}
	\langle \mathcal{D}^{\alpha}f_x,f\rangle = -\langle f,\mathcal{D}^{\alpha}f_x\rangle \text{  implies  }  \langle \mathcal{D}^{\alpha}f_x,f \rangle =0.
	\end{equation*}
	\ref{itm2.4} can be derived using the Fourier transform as follows: 
	\[ \mathcal{F}[\mathcal{D}^{\alpha_1}\mathcal{D}^{\alpha_2}f](\xi) = |\xi|^{\alpha_1}|\xi|^{\alpha_2}\mathcal{F}[f](\xi) = |\xi|^{\alpha_1+\alpha_2}\mathcal{F}[f](\xi) = \mathcal{F}[\mathcal{D}^{\alpha_1+\alpha_2}f](\xi),\]
	and applying the inverse Fourier transform, we have the required identity. Finally, \ref{itm2.5} is followed from \cite[Proposition 3.6]{Hitchhiker}.
\end{proof}
Furthermore, it can be easily observed that the fractional Laplacian commutes with differentiation, i.e. $\mathcal{D}^\alpha f_x = (\mathcal{D}^\alpha f)_x$. In addition, we mention few results associated to the Sobolev embedding and interpolation inequality which will be instrumental for further analysis.
\begin{lemma}
	Let $p\in[1,\infty)$ and $s,s'>1$ be any two real numbers. Let $\Omega \subseteq \mathbb{R}$ be any open set and $u$ be a measurable function defined on $\Omega$. If $s'\geq s$, then we have 
	\begin{equation*}
	\|u\|_{W^{s,p}}\leq C\|u\|_{W^{s',p}}
	\end{equation*}
	for some positive constant $C$ depending only on $s$ and $p$.
\end{lemma}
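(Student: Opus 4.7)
The plan is to reduce the inequality to a comparison of Gagliardo seminorms on the common underlying Lebesgue scale, and then to handle the fractional case by a standard diagonal-splitting argument.

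First I would dispose of the trivial case $s=s'$ and assume $s<s'$. Writing $s=k+\sigma$ and $s'=k'+\sigma'$ with $k,k'\in\mathbb{N}_{0}$ and $\sigma,\sigma'\in[0,1)$, recall that the full norm $\|u\|_{W^{s,p}(\Omega)}^{p}$ equals $\|u\|_{W^{k,p}(\Omega)}^{p}$ plus, when $\sigma>0$, the Gagliardo seminorms $[D^{\beta}u]_{W^{\sigma,p}(\Omega)}^{p}$ for $|\beta|=k$. The obvious case is $k'>k$: every classical derivative of order $\leq k$ is controlled termwise by $\|u\|_{W^{k',p}(\Omega)}\leq \|u\|_{W^{s',p}(\Omega)}$, so only the fractional Gagliardo piece at order $k$ needs attention, and this will be dominated by $\|D^{\beta}u\|_{W^{1,p}(\Omega)}$ for $|\beta|=k$ through the embedding for $\sigma\in(0,1)$ described below (applied to $D^{\beta}u$).

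The core technical step is therefore the single-level inequality: for a fixed integer $k$ (which in our decomposition is $\min(k,k')$) and $0\le\sigma\le\sigma'<1$, one must prove
\begin{equation*}
[f]_{W^{\sigma,p}(\Omega)} \;\leq\; C\bigl([f]_{W^{\sigma',p}(\Omega)}+\|f\|_{L^{p}(\Omega)}\bigr),\qquad f:=D^{\beta}u,\ |\beta|=k.
\end{equation*}
I would split the Gagliardo double integral into the near-diagonal region $R_{1}=\{(x,y)\in\Omega\times\Omega:|x-y|\leq 1\}$ and the far region $R_{2}=\{|x-y|>1\}$. On $R_{1}$, since the base $|x-y|$ is at most $1$ and $1+\sigma p\leq 1+\sigma'p$, one has $|x-y|^{-(1+\sigma p)}\leq |x-y|^{-(1+\sigma' p)}$, so the integrand of $[f]_{W^{\sigma,p}}^{p}$ is pointwise dominated by that of $[f]_{W^{\sigma',p}}^{p}$. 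On $R_{2}$, use $|f(x)-f(y)|^{p}\leq 2^{p-1}(|f(x)|^{p}+|f(y)|^{p})$ together with Tonelli, noting that $\int_{|x-y|>1}|x-y|^{-(1+\sigma p)}dy$ is finite, which bounds the contribution by a multiple of $\|f\|_{L^{p}(\Omega)}^{p}$.

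Assembling the two regions yields the single-level inequality, and then summing over $|\beta|=k$ together with the $k<k'$ case above (and the embedding $\|u\|_{W^{k,p}(\Omega)}\leq\|u\|_{W^{k',p}(\Omega)}$ for integer orders, which is immediate from the definition) produces the desired bound with a constant $C$ depending only on $s$, $s'$ and $p$. The main obstacle I expect is purely bookkeeping: keeping track of which seminorms appear at which integer level and ensuring that, when $k'>k$, the extra classical derivative can indeed control the Gagliardo seminorm at level $k$ through the same splitting argument, rather than requiring a true Sobolev embedding in mixed exponents. For a reader who prefers a reference, Proposition~2.1 of \cite{Hitchhiker} records exactly the single-level estimate; the full lemma follows by combining it with the trivial monotonicity of integer-order Sobolev spaces.
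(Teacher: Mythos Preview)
Your proposal is correct and in fact coincides with the paper's approach: the paper's entire proof is the one-line citation ``For the proof, one can refer to \cite[Proposition~2.1]{Hitchhiker}'', and the diagonal-splitting argument you spell out (near region $|x-y|\le 1$ versus far region $|x-y|>1$) is precisely the proof of that proposition. You even flag the same reference at the end, so there is no substantive difference---you have simply unpacked what the paper leaves as a citation, and additionally handled the bookkeeping for the integer-plus-fractional decomposition that the range $s,s'>1$ in the stated lemma requires but which Proposition~2.1 of \cite{Hitchhiker} (stated only for exponents in $(0,1)$) does not itself cover.
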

\begin{proof}
	For the proof, one can refer to \cite[Proposition 2.1]{Hitchhiker}.
\end{proof} 
\begin{lemma}\label{soboi}\cite[Theorem 6.7]{Hitchhiker}
	Let $s\in(0,1)$ and $p\in[1,\infty)$ such that $sp<n$. Then there exists a positive constant $C= C(n,p,s)$ such that the Sobolev space $W^{s,p}(\mathbb{R}^n)$ is continuously embedded in $L^q(\mathbb{R}^n)$ for any $q\in[p,p^*]$ with $p^*:=np/(n-sp)$, i.e. for any $f\in W^{s,p}(\Omega)$,
	\begin{equation}\label{embed:sobolev}
	\|f\|_{L^q(\mathbb{R}^n)} \leq C \|f\|_{W^{s,p}(\mathbb{R}^n)}.
	\end{equation}
\end{lemma}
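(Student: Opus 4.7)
The plan is to reduce to the endpoint case $p=1$ and then bootstrap to general $p$ via a power substitution; the final range $q\in[p,p^*]$ is obtained by interpolation. First I would establish the endpoint fractional Sobolev inequality
\[
\|f\|_{L^{n/(n-s)}(\mathbb{R}^n)} \leq C(n,s)\,[f]_{W^{s,1}(\mathbb{R}^n)}, \qquad 0<s<1,\ s<n,
\]
where $[\cdot]_{W^{s,1}}$ denotes the Gagliardo seminorm. The standard route is a layer-cake / coarea style argument: for $k\in\mathbb{Z}$, set $A_k=\{|f|>2^k\}$, $a_k=|A_k|$, and decompose $\mathbb{R}^n$ into the dyadic rings $D_k=A_k\setminus A_{k+1}$. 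Isoperimetric-flavored estimates for the distribution function of $f$ (obtained by carefully unwinding the double integral defining $[f]_{W^{s,1}}$ on neighboring rings) give a control of $a_k^{(n-s)/n}$ by a weighted sum in which geometric factors $2^{-(k-j)s}$ appear, and summing in $k$ with Minkowski / Young's inequality yields the claim.

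Second, to pass from $p=1$ to general $p\in(1,\infty)$ with $sp<n$, I would apply the endpoint inequality to $g=|f|^{\gamma}$ where the exponent $\gamma>1$ is chosen so that $\gamma\cdot\frac{n}{n-s}=p^*$, i.e.\ $\gamma=\frac{p(n-s)}{n-sp}$. Using the pointwise bound
\[
\bigl|\,|f(x)|^{\gamma}-|f(y)|^{\gamma}\,\bigr|\leq \gamma\bigl(|f(x)|^{\gamma-1}+|f(y)|^{\gamma-1}\bigr)\,|f(x)-f(y)|,
\]
and Hölder's inequality with conjugate exponents $p$ and $p'$ on the doubled integrand (splitting the Gagliardo kernel $|x-y|^{-n-s}$ into the pieces $|x-y|^{-(n+s)/p}\cdot|x-y|^{-(n+s)/p'}$), the seminorm $[g]_{W^{s,1}}$ is dominated by $C\,\|f\|_{L^{p^*}}^{\gamma-1}\,[f]_{W^{s,p}}$. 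Combining with the endpoint gives
\[
\|f\|_{L^{p^*}}^{\gamma}\leq C\,\|f\|_{L^{p^*}}^{\gamma-1}\,[f]_{W^{s,p}},
\]
so dividing out the $L^{p^*}$ factor (after a standard truncation/approximation to ensure finiteness) produces the desired continuous embedding into $L^{p^*}$. For the intermediate exponents $q\in(p,p^*)$, the estimate follows by writing $1/q=\theta/p+(1-\theta)/p^*$ and applying Hölder together with the trivial inclusion $W^{s,p}\hookrightarrow L^p$.

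The main obstacle is the endpoint step $p=1$, where the nonlocal Gagliardo seminorm does not admit the integration-by-parts or gradient-truncation tricks available in the classical ($s=1$) case; one has to bound the $L^{n/(n-s)}$ quasi-norm directly through the distribution function, and the dyadic/isoperimetric estimate is delicate in that it must balance contributions coming from both $|x-y|$ small (nearby level sets) and $|x-y|$ large (far level sets). Once this endpoint is secured, the $\gamma$-substitution and interpolation arguments are essentially algebraic bookkeeping.
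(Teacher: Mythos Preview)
Your outline is correct and follows the standard argument found in the cited reference \cite[Theorem~6.7]{Hitchhiker}: reduce to the endpoint $p=1$ via a dyadic level-set decomposition, then bootstrap to general $p$ by the power substitution $g=|f|^{\gamma}$ with $\gamma=p(n-s)/(n-sp)$, and finish by interpolation for intermediate $q$. Note, however, that the paper under review does not supply its own proof of this lemma at all --- it is stated with a direct citation to \cite{Hitchhiker} and used as a black box --- so there is no ``paper's proof'' to compare against beyond that reference. In short, you have reproduced (in sketch form) the argument the paper merely quotes.
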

\begin{lemma}\cite[Proposition 3.1]{Ponce}
	If $s_1\leq s \leq s_2$ with $s=\theta s_1 + (1-\theta) s_2$, $0\leq \theta \leq 1$, then for any suitable function $u$, we have
	\begin{equation}\label{embed:inter}
	\|u\|_{H^s(\mathbb{R}^n)}\leq \|u\|^{\theta}_{H^{s_1}(\mathbb{R}^n)} \|u\|^{1-\theta}_{H^{s_2}(\mathbb{R}^n)}.
	\end{equation}
\end{lemma}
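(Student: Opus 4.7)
The plan is to prove the interpolation inequality \eqref{embed:inter} via the Fourier-side characterization of the Sobolev norm together with H\"older's inequality. By Plancherel, we have $\|u\|_{H^s(\mathbb{R}^n)}^2 = \int_{\mathbb{R}^n} (1+|\xi|^2)^{s} |\mathcal{F}[u](\xi)|^2\,d\xi$, and similarly for $s_1$ and $s_2$, so the statement reduces to a pointwise factorization of the weight $(1+|\xi|^2)^{s}$ in $\xi$-space.

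The key step is to exploit the affine relation $s = \theta s_1 + (1-\theta)s_2$ to write
\begin{equation*}
(1+|\xi|^2)^{s} |\mathcal{F}[u](\xi)|^2
= \Bigl[(1+|\xi|^2)^{s_1}|\mathcal{F}[u](\xi)|^2\Bigr]^{\theta}
  \Bigl[(1+|\xi|^2)^{s_2}|\mathcal{F}[u](\xi)|^2\Bigr]^{1-\theta}.
\end{equation*}
I would then integrate over $\mathbb{R}^n$ and apply H\"older's inequality with the conjugate pair of exponents $p = 1/\theta$ and $q = 1/(1-\theta)$, which are valid whenever $\theta \in (0,1)$. This yields
\begin{equation*}
\|u\|_{H^s(\mathbb{R}^n)}^2 \leq \|u\|_{H^{s_1}(\mathbb{R}^n)}^{2\theta}\, \|u\|_{H^{s_2}(\mathbb{R}^n)}^{2(1-\theta)},
\end{equation*}
and taking square roots delivers \eqref{embed:inter}. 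The boundary cases $\theta = 0$ and $\theta = 1$ collapse to equality trivially, so H\"older is only needed in the open interval.

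There is no real obstacle here: the only subtlety is verifying that the H\"older exponents $1/\theta$ and $1/(1-\theta)$ are conjugate, which is immediate from $\theta + (1-\theta) = 1$. I would also briefly remark that ``suitable function $u$'' can be taken to mean $u \in H^{s_2}(\mathbb{R}^n)$, so that both right-hand factors are finite and the inequality is meaningful; an approximation by Schwartz functions then extends it to all such $u$ by density if one prefers a completely rigorous presentation. Since the result is standard and already attributed to \cite[Proposition 3.1]{Ponce}, a short Fourier-analytic argument along these lines suffices.
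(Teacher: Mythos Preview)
Your argument is correct and is the standard Fourier-analytic proof of this interpolation inequality. Note, however, that the paper does not supply its own proof of this lemma at all: it is stated with a citation to \cite[Proposition~3.1]{Ponce} and used as a black box, so there is no paper proof to compare against---your proposal simply fills in what the reference would contain.
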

The above estimates will be essential throughout this paper and for several other identities relevant to fractional Laplacian, one can refer to \cite{10Definition,Hitchhiker}.

In order to derive a local smoothing effect which bounds $u^n$ locally in $H^{\alpha/2}$-norm, we follow the approach originally introduced by Kato \cite{KATO} for the KdV equation. In the work \cite{CNKDV}, it was demonstrated that the solution operator of the KdV equation exhibits a smoothing effect attributed to dispersion. The smoothing effect plays a significant role in the proof of existence of solutions when the initial data belongs to $L^2(\mathbb{R})$. The technique, introduced by Kato \cite{KATO}, is based on the consequence of the commutator identity mentioned in \cite{CNGALTUNG}. We introduce more general commutator operator involving the remainder operator $S_\mu$:
\begin{equation}
[\mathcal{D}^{\mu},h] =: S_\mu, \quad \mu\in \R^{+},
\end{equation}
where $h$ is the operator of multiplication by a smooth function and the commutator bracket $[P,Q]:=PQ-QP$ applies to operators $P$ and $Q$ acting on suitable Sobolev spaces with $0\leq\mu\leq 1$, refer to \cite{VeloC, Galtungc}. Building upon the priori estimate from \cite{biccari,Galtungc,CNGALTUNG}, we have:
\begin{equation*}
\mathcal{D}^{\mu} h = h\mathcal{D}^{\mu} + S_{\mu}(h), \qquad 0\leq\mu \leq 1,
\end{equation*}
and
\begin{equation}
\vertiii{S_\mu} \leq \frac{1}{\sqrt{2\pi}}\left\|\mathcal{F}[{\mathcal{D}^{\mu} h}]\right\|_{L^1(\mathbb{R})},
\end{equation}
where $\vertiii{\cdot}$ denotes the operator norm in $L^2(\mathbb{R})$. 
Hence we have the following estimate:
\begin{equation*}
\begin{split}
\vertiii{S_{\alpha/2}} &\leq \frac{1}{\sqrt{2\pi}}\left\|\mathcal{F}[{\mathcal{D}^{\alpha/2} h}]\right\|_{L^1(\mathbb{R})} = \frac{1}{\sqrt{2\pi}}\left\||\xi|^{\alpha/2}\mathcal{F}[{h}]\right\|_{L^1(\mathbb{R})}\\
& \leq \frac{1}{\sqrt{2\pi}}\left\|(1+|\xi|)\mathcal{F}[{h}]\right\|_{L^1(\mathbb{R})} = \frac{1}{\sqrt{2\pi}}\left(\left\|\mathcal{F}[{h}]\right\|_{L^1(\mathbb{R})} +
\left\|\mathcal{F}[{h}']\right\|_{L^1(\mathbb{R})}\right).
\end{split}
\end{equation*}
Note that since $h\in C_c^\infty(\mathbb{R})$, we have $h,h'\in L^1(\mathbb{R})$ and as a consequence, $|\mathcal{F}[{h}'](\xi)|\leq \frac{C}{(1+|\xi|)}$, where $C = C(\|h'\|_{L^1(\mathbb{R})})$. We end up with the following estimate for the remainder operator:
\begin{equation}\label{Remd}
\vertiii{S_{\alpha/2}} \leq \frac{4C}{\sqrt{2\pi}}=:C_S.
\end{equation}
Motivated by Kato \cite{KATO}, we define a smooth cut-off function $\phi$ in the following way
\begin{enumerate}[label=(\roman*)]
	\item \label{item1.1}$1\leq \phi(x) \leq 2+2R,$
	\item $\phi_x(x) = 1 $ for $|x|<R$,
	\item $\phi_x(x) = 0 $ for $|x|\geq R+1$,
	\item $0\leq \phi_x(x) \leq 1,$ for $x\in\mathbb{R}$ and 
	\item \label{item1.5}$\sqrt{\phi_x}\in C_c^\infty(\R)$,
\end{enumerate}
where $R$ denotes a positive constant.
Properties $\ref{item1.1}$-$\ref{item1.5}$ can be obtained by standard mollifier methods (for details, kindly refer to \cite{Galtungc}). Taking into account the properties $\phi$, we consider $v=\phi u^{n+1/2} =: \phi w$ as an admissible test function in $H^{1+\alpha}$. Using the test function $v=\phi w$ in \eqref{CN}, we obtain
\begin{equation}\label{GSl}
\frac{1}{2}\norm{u^{n+1}\sqrt{\phi}}^2_{L^2(\mathbb{R})} + \Delta t \int_\mathbb{R} \mathcal{D}^{\alpha/2}w \mathcal{D}^{\alpha/2}(\phi w)_x\,dx +\Delta t \int_\mathbb{R}\left(\frac{w^2}{2}\right)_xw\phi\,dx = \frac{1}{2}\norm{u^{n}\sqrt{\phi}}^2_{L^2(\mathbb{R})}.
\end{equation}

\begin{proposition}\label{LSE}	Let the initial data $u^0\in L^2(\R)$ and $u^n$ be the solution of \eqref{GSl}. Then we have
	\begin{equation}\label{esti_local_smooth}
	u^{n+1/2}\in \ell^2\left([0,m\Delta t]; H^{\alpha/2}([-R,R])\right),\qquad 0\leq m\leq N,
	\end{equation}
	where $R$ denotes a positive constant.
\end{proposition}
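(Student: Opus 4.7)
The plan is to upgrade the energy identity \eqref{GSl} to a one-step inequality of the form
$$\tfrac{1}{2}\|u^{n+1}\sqrt{\phi}\|_{L^2(\R)}^2+c\,\Delta t\int_\R\phi_x(\mathcal{D}^{\alpha/2}u^{n+1/2})^2\,dx\le\tfrac{1}{2}\|u^n\sqrt{\phi}\|_{L^2(\R)}^2+C\,\Delta t\,F(\|u^0\|_{L^2(\R)}),$$
and then sum over $n=0,\ldots,m-1$. Because $\phi_x\equiv 1$ on $[-R,R]$, the resulting bound on $\sum_n\Delta t\int_\R\phi_x(\mathcal{D}^{\alpha/2}u^{n+1/2})^2\,dx$, combined with the $L^2$-conservation \eqref{L2Bound_1}, will deliver exactly the discrete $\ell^2$-in-time $H^{\alpha/2}([-R,R])$ estimate \eqref{esti_local_smooth} asserted by the proposition.

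The heart of the argument lies in the dispersive term. Setting $w=u^{n+1/2}$ and expanding $(\phi w)_x=\phi_x w+\phi w_x$, I would use the commutation of $\mathcal{D}^{\alpha/2}$ with $\partial_x$ (Lemma~\ref{lemma1}\ref{itm2.2}), the symmetry property \ref{itm2.1}, and the commutator identity $\mathcal{D}^{\alpha/2}(\phi w)=\phi\,\mathcal{D}^{\alpha/2}w+S_{\alpha/2}(w)$ to rewrite
$$\int_\R\mathcal{D}^{\alpha/2}w\,\mathcal{D}^{\alpha/2}(\phi w)_x\,dx=\tfrac{1}{2}\int_\R\phi_x(\mathcal{D}^{\alpha/2}w)^2\,dx+\mathcal{R},$$
where the positive contribution comes from the identity $2ff_x=(f^2)_x$ applied to $f=\mathcal{D}^{\alpha/2}w$ against the weight $\phi$ and an integration by parts, and $\mathcal{R}$ collects inner products involving $S_{\alpha/2}$ applied to $w$ or $w_x$. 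Invoking the operator-norm bound \eqref{Remd} on $\vertiii{S_{\alpha/2}}$ together with Cauchy--Schwarz and Young's inequality with parameter $\varepsilon>0$, one controls $|\mathcal{R}|\le\varepsilon\int_\R\phi_x(\mathcal{D}^{\alpha/2}w)^2\,dx+C_\varepsilon\|w\|_{L^2(\R)}^2$. For the convective term, integration by parts gives
$$\int_\R\left(\frac{w^2}{2}\right)_x w\phi\,dx=-\frac{1}{3}\int_\R w^3\phi_x\,dx,$$
and since $\sqrt{\phi_x}\in C_c^\infty(\R)$ by property \ref{item1.5}, the Sobolev embedding Lemma~\ref{soboi}, the interpolation inequality \eqref{embed:inter}, and Young's inequality bound the right-hand side by $\varepsilon\int_\R\phi_x(\mathcal{D}^{\alpha/2}w)^2\,dx+C_\varepsilon(R)\|w\|_{L^2(\R)}^{p}$ for an appropriate exponent $p>0$.

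Absorbing the $\varepsilon$-terms into the positive dispersive contribution on the left of \eqref{GSl} and using the upper bound $\phi\le 2+2R$ from property \ref{item1.1}, one obtains the target one-step inequality; telescoping in $n$ and invoking \eqref{L2Bound_1} closes the estimate and yields the claim. The main obstacle is to verify that the residue $\mathcal{R}$ can be absorbed by the weighted, localized seminorm $\int_\R\phi_x(\mathcal{D}^{\alpha/2}w)^2\,dx$ rather than the uncontrolled global quantity $\|\mathcal{D}^{\alpha/2}w\|_{L^2(\R)}^2$: the estimate \eqref{Remd} is merely an $L^2\to L^2$ bound, so iterated commutators with $\phi$ and $\phi_x$ must be redistributed carefully, using that these commutators are essentially supported on $\operatorname{supp}\phi_x$. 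A secondary but genuine difficulty is the borderline Sobolev embedding at $\alpha=1$, where $H^{1/2}$ fails to embed into $L^\infty$ and a sharper Gagliardo--Nirenberg inequality on $\{\phi_x>0\}$ is needed to keep the power of the local $H^{\alpha/2}$-norm strictly below $2$ so that the Young absorption step goes through.
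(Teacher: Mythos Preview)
Your overall strategy---extract the positive localised term $\int_\R\phi_x(\mathcal D^{\alpha/2}w)^2\,dx$ from the dispersive part via the commutator $S_{\alpha/2}$, bound the convective cubic term by Sobolev embedding and interpolation applied to $w\sqrt{\phi_x}$, and telescope---is exactly the paper's. The differences lie in how the two obstacles you flag are handled.

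For the commutator remainder $\mathcal R$, the paper does \emph{not} try to absorb it into the localised seminorm via an $\varepsilon$--Young argument. After writing $\mathcal D^{\alpha/2}(\phi w)=\phi\,\mathcal D^{\alpha/2}w+S_{\alpha/2}(\phi)w$ and expanding the $x$-derivative, the paper reduces the cross terms to expressions of the type $\int_\R w\,\mathcal D^{\beta}w_x\,dx$, which vanish identically by Lemma~\ref{lemma1}\ref{itm2.3}; thus the full positive contribution $\|\sqrt{\phi_x}\,\mathcal D^{\alpha/2}w\|_{L^2}^2$ survives as a \emph{lower} bound for $\mathcal B_2$, with nothing to absorb. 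In particular, your suggested route of exploiting that ``commutators are essentially supported on $\operatorname{supp}\phi_x$'' would not work as stated---$S_{\alpha/2}(\phi)$ is a genuinely nonlocal operator with no support restriction---whereas the paper's skew-symmetry argument bypasses this issue entirely.

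Your second worry, the borderline embedding at $\alpha=1$, is unfounded. The paper never invokes $H^{\alpha/2}\hookrightarrow L^\infty$. For the cubic term it uses $\|w\sqrt{\phi_x}\|_{L^4}\le C\|w\sqrt{\phi_x}\|_{H^{\alpha/4}}$ (valid in one dimension for every $\alpha\ge1$, the critical Sobolev exponent for $L^4$ being $1/4$), then the interpolation $\|\cdot\|_{H^{\alpha/4}}\le\|\cdot\|_{L^2}^{1/2}\|\cdot\|_{H^{\alpha/2}}^{1/2}$, and finally the commutator identity $\mathcal D^{\alpha/2}(w\sqrt{\phi_x})=\sqrt{\phi_x}\,\mathcal D^{\alpha/2}w+S_{\alpha/2}(\sqrt{\phi_x})w$ together with \eqref{Remd} to arrive at $\|\sqrt{\phi_x}\,\mathcal D^{\alpha/2}w\|_{L^2}+C_S\|w\|_{L^2}$. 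The localised seminorm thus appears with exponent exactly~$1$, so a single application of Young's inequality absorbs it with coefficient $1/2$, uniformly in $\alpha\in[1,2)$; no sharper Gagliardo--Nirenberg argument is needed.
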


\begin{proof}
We start with the following identity, obtained by using the integration by parts
\begin{equation*}
\int_\mathbb{R}\left(\frac{w^2}{2}\right)_xw\phi\,dx = -\frac{1}{3}\int_\mathbb{R}w^3\phi_x\,dx
\end{equation*}
which can be estimated by applying the Cauchy-Schwartz inequality
\begin{align}
\mathcal{B}_1:= \int_\mathbb{R}w^3\phi_x\,dx \leq& \left(\int_\mathbb{R}w^2\,dx\right)^{1/2}\left(\int_\mathbb{R}w^4\phi^2_x\,dx\right)^{1/2}
= \norm{w}_{L^2(\mathbb{R})}\norm{w\sqrt{\phi_x}}^2_{L^4(\mathbb{R})} \nonumber\\
\leq& C \norm{w}_{L^2(\mathbb{R})}\norm{w\sqrt{\phi_x}}^2_{H^{\alpha/4}(\mathbb{R})}
\leq C \norm{w}_{L^2(\mathbb{R})}\norm{w\sqrt{\phi_x}}_{L^2(\mathbb{R})}\norm{w\sqrt{\phi_x}}_{H^{\alpha/2}(\mathbb{R})} \nonumber\\
\leq& \frac{1}{2} \norm{w\sqrt{\phi_x}}^2_{H^{\alpha/2}(\mathbb{R})} + \frac{C^2}{2} \norm{w}^2_{L^2(\mathbb{R})}\norm{w\sqrt{\phi_x}}^2_{L^2(\mathbb{R})}\nonumber\\
\leq& \frac{1}{2} \|\mathcal{D}^{\alpha/2}(w\sqrt{\phi_x})\|^2_{L^2(\mathbb{R})}+\frac{1}{2}\norm{w\sqrt{\phi_x}}^2_{L^2(\mathbb{R})}+ \frac{C^2}{2} \norm{w}^2_{L^2(\mathbb{R})}\norm{w\sqrt{\phi_x}}^2_{L^2(\mathbb{R})}\nonumber\\
\leq& \frac{1}{2} \|\sqrt{\phi_x}\mathcal{D}^{\alpha/2}w\|^2_{L^2(\mathbb{R})}+\frac{1}{2}\norm{S_{\alpha/2}(\sqrt{\phi_x})w}^2_{L^2(\mathbb{R})} \nonumber \\ 
&\qquad+\frac{1}{2}\norm{w\sqrt{\phi_x}}^2_{L^2(\mathbb{R})}+ \frac{C^2}{2} \norm{w}^2_{L^2(\mathbb{R})}\norm{w\sqrt{\phi_x}}^2_{L^2(\mathbb{R})}\nonumber\\
\leq& \frac{1}{2} \|\sqrt{\phi_x}\mathcal{D}^{\alpha/2}w\|^2_{L^2(\mathbb{R})}+\frac{1}{2}C_S\norm{w}^2_{L^2(\mathbb{R})}+\frac{1}{2}(1+C^2\norm{w}^2_{L^2(\mathbb{R})})\norm{w}^2_{L^2(\mathbb{R})} \label{B_1 esti},
\end{align}
where we have taken into account \eqref{embed:sobolev} and \eqref{embed:inter}. Furthermore, we have the estimate by using the properties of Lemma \ref{lemma1}, $\phi$ and \eqref{Remd} as follows
\begin{align}
\mathcal{B}_2:= & \int_\mathbb{R} \mathcal{D}^{\alpha/2}w \mathcal{D}^{\alpha/2}(\phi w)_x\,dx = \int_\mathbb{R} \mathcal{D}^{\alpha/2}w(\mathcal{D}^{\alpha/2} (\phi w))_x\,dx \nonumber\\
=& \int_\mathbb{R} \mathcal{D}^{\alpha/2}w \big(\phi \mathcal{D}^{\alpha/2}w+S_{\alpha/2}(\phi)w \big)_x\,dx
=\int_\mathbb{R} \mathcal{D}^{\alpha/2}w \Big(\phi_x \mathcal{D}^{\alpha/2}w+\phi \mathcal{D}^{\alpha/2}w_x+(S_{\alpha/2}(\phi)w)_x \Big)\,dx \nonumber\\
=& \int_\mathbb{R} \mathcal{D}^{\alpha/2}w(\phi_x \mathcal{D}^{\alpha/2}w)\,dx+\int_\mathbb{R}\mathcal{D}^{\alpha/2}w(\phi \mathcal{D}^{\alpha/2}w_x)\,dx-\int_\mathbb{R}\mathcal{D}^{\alpha/2}w_xS_{\alpha/2}(\phi)w\,dx \nonumber\\
\geq& \int_\mathbb{R} \mathcal{D}^{\alpha/2}w(\phi_x \mathcal{D}^{\alpha/2}w)\,dx+\int_\mathbb{R}w \mathcal{D}^{\alpha}w_x\,dx-C_S\int_\mathbb{R}w\mathcal{D}^{\alpha/2}w_x \,dx
= \|\sqrt{\phi_x}\mathcal{D}^{\alpha/2}w\|^2 \label{B_2_esti}.
\end{align}
Subsequently, inserting the estimates for $\mathcal{B}_1$ and $\mathcal{B}_2$ in \eqref{GSl}, we obtain
	\begin{equation*}
	\begin{split}
	\frac{1}{2}\norm{u^{n+1}\sqrt{\phi}}^2_{L^2(\mathbb{R})} + \Delta t \|\sqrt{\phi_x}\mathcal{D}^{\alpha/2}w\|^2 
	& \leq \frac{1}{2}\norm{u^{n}\sqrt{\phi}}^2_{L^2(\mathbb{R})} +\frac{\Delta t}{6} \|\sqrt{\phi_x}\mathcal{D}^{\alpha/2}w\|^2_{L^2(\mathbb{R})}\\
	&+\frac{\Delta t}{6}C_S\norm{w}^2_{L^2(\mathbb{R})}+\frac{\Delta t}{6}(1+C^2\norm{w}^2_{L^2(\mathbb{R})})\norm{w}^2_{L^2(\mathbb{R})}
	\end{split}
	\end{equation*}
	which further becomes
	\begin{equation}\label{temp: local_smooth_1}
	\begin{split}
	\frac{1}{2}\norm{u^{n+1}\sqrt{\phi}}^2_{L^2(\mathbb{R})}  + \frac{5\Delta t}{6} \int_{\mathbb{R}}&\phi_x|\mathcal{D}^{\alpha/2}u^{n+1/2}|^2\,dx 
	\leq \frac{1}{2}\norm{u^{n}\sqrt{\phi}}^2_{L^2(\mathbb{R})} + C\left(\norm{u^0}_{L^2(\mathbb{R})}\right)\Delta t,
	\end{split}
	\end{equation}
where the final estimate in \eqref{temp: local_smooth_1} is due to the boundedness of the $L^2$-norm of $w$ (cf. \eqref{L2Bound_1}). After dropping the positive second term on the left hand side in \eqref{temp: local_smooth_1} and subsequently, summing over $n=0$ to $n=m-1$ and observing that this is a telescoping sum, we end up with
\begin{equation}\label{temp: local_smooth_2}
	\|u^m\sqrt{\phi}\|^2_{L^2(\mathbb{R})}\leq \|u^0\sqrt{\phi}\|^2_{L^2(\mathbb{R})} + C\left(\norm{u^0}_{L^2(\mathbb{R})}\right)m\Delta t.
\end{equation}
Again in \eqref{temp: local_smooth_1}, first taking sum over from $n=0$ to $n=m$ and then dropping the term
$\frac{1}{2}\|u^{k+1}\sqrt{\phi}\|^2_{L^2(\mathbb{R})}$ from the resulting estimate yields 
	\begin{equation}\label{temp: local_smooth_3}
	\Delta t \sum_{n=0}^m\int_{-R}^{R}\phi_x|\mathcal{D}^{\alpha/2}u^{n+1/2}|^2\,dx
	\leq \frac{6}{5}\left(\frac{1}{2}\norm{u^{n}\sqrt{\phi}}^2_{L^2(\mathbb{R})} + C\left(\norm{u^0}_{L^2(\mathbb{R})}\right)(m+1)\Delta t\right).
	\end{equation}
	Combining the above estimates \eqref{temp: local_smooth_2} and \eqref{temp: local_smooth_3}, we obtain \eqref{esti_local_smooth}. Hence the result follows.
\end{proof}

\begin{remark}
 The Kato type local smoothing effects in Proposition \ref{LSE} illustrates that the temporal discretization of the sequence space mentioned in \eqref{esti_local_smooth} serves as a discrete analogous representation of $L^2([0,T);H^{\alpha/2}([-R,R]))$. We would like to point out that the solution obtained from the Crank-Nicolson temporal discretized equation demonstrate a similar local smoothing effect observed in the BO equation \cite{CNGALTUNG} and the KdV equation \cite{dutta2015convergence}.
\end{remark}
\begin{remark}
 The local smoothing effect will play a crucial role in demonstrating the convergence of the discretized solution of \eqref{fkdv}. The idea behind our approach is to consider the test function of the form $\phi v$, where $v$ belongs to a suitable finite element space. This approach was implemented for KdV equation in \cite{dutta2015convergence} and for BO equation in \cite{Galtungc}. The choice of test function has an advantage to obtain $H^{\alpha/2}$-bound.
 However, the $L^2$-estimate in \eqref{L2Bound} no longer follows directly by choosing an appropriate test function.
\end{remark}
Our objective is to derive the $L^2$-estimate for the approximate solution of \eqref{fkdv} while using the test function of the type $\phi v$. In order to use the local smoothing effect, we introduce a suitable CFL condition which will be instrumental to find the $L^2$-bounds.
 \section{Discrete formulation}\label{sec3}
In this section, we propose a Galerkin scheme for spatial discretization along with the Crank-Nicolson scheme for temporal discretization. At each discrete time step, our aim is to define a sequence of functions approximating the exact solution of \eqref{fkdv} using the weak formulation. Furthermore, an iteration scheme will be devised to solve the implicit equation for each time step. As a consequence, we also need to demonstrate the solvability of the resulting equation. We start by introducing the following notations.
\subsection{Notation}
To establish the finite element space, we partition the spatial domain into equally-sized elements in the form of intervals. For each $j\in\mathbb{Z}$, let the grid points be $x_j = j\Delta x$, where $\Delta x$ represents the spatial step length. Consequently, we denote the spatial grid cells $I_j = [x_{j-1},x_j]$. Similarly, for the discretization of time, we fix a time horizon $T>0$ and set $t_n = n\Delta t$ for $n\in \{0,1,2,.......,N\}$, where $\Delta t$ is the temporal step length and $\left(N+\frac{1}{2}\right)\Delta t =T.$
Furthermore, we introduce the notation $t_{n+1/2} = (t_n+t_{n+1})/2$.
 
 We define the weighted $L^2$-inner product with the weight function $\phi$ defined in the previous section
 \begin{equation*}
     \langle u,v\rangle_\phi := \langle u,v\phi\rangle,
 \end{equation*}
 and the associated weighted norm $\|u\|_{2,\phi} = \sqrt{\langle u,u\phi\rangle}$.
\subsection{Galerkin Scheme}
We propose a Galerkin scheme for \eqref{fkdv} using the weak formulation \eqref{WeakF}. In particular, we seek an approximation $u_{\Delta x}$ of \eqref{fkdv} belongs to the finite element space
\begin{equation*}
	S_{\Delta x} = \{v\in H^{1+\alpha}(\mathbb{R}) ~| ~v\in \mathbb{P}_r(I_j),~j\in \Z\}
\end{equation*}
for all $t\in [0,T]$, where $r\geq2$, and $\mathbb{P}_r(I_j)$ denotes the space of polynomials on $I_j$ with degree at most $r$.

We define $\mathcal{P}$ as the $L^2$-orthogonal projection onto the space $S_{\Delta x}$ and set $u^0 = \mathcal{P}u_0$. We define the sequence $\{u^n\}$ such that $u^{n+1}\in S_{\Delta x}$ satisfies
\begin{equation}\label{CNscheme}
     \langle u^{n+1},\phi v\rangle - \Delta t \left\langle\frac{(u^{n+1/2})^2}{2},(\phi v)_x\right\rangle + \Delta t\left\langle \mathcal{D}^{\alpha/2} u^{n+1/2} ,\mathcal{D}^{\alpha/2} (\phi v)_x\right\rangle = \langle u^n,\phi v\rangle,
 \end{equation}
for all $v\in S_{\Delta x}$ and $n\in \{0,1,...,N\}$. Due to the implicit nature of the scheme \eqref{CNscheme} and in order to find $u^{n+1}$ from the nonlinear equation \eqref{CNscheme}, we need to consider the solvability at each time step. 
Since $\mathcal{P}$ is the $L^2$-projection operator, there holds $\norm{u^0}_{L^2(\mathbb{R})}\leq \norm{u_0}_{L^2(\mathbb{R})}$.
\subsection{Solvability for each time step}
In order to demonstrate the existence of a solution $u^n$ for each time step, we refer to the approach in \cite{CNKDV} and \cite{Galtungc} for KdV equation and BO equation respectively. Let us define the iterative scheme as follows: for every $v\in S_{\Delta x}$, 
\begin{equation}\label{solv}
\begin{cases}
    &\langle w^{\ell+1},\phi v\rangle - \displaystyle\frac{\Delta t}{2}\left\langle\left(\frac{w^\ell+u^n}{2}\right)^2,(\phi v)_x\right\rangle+\Delta t\left\langle \mathcal{D}^{\alpha/2}\left( \frac{w^{\ell+1}+u^n}{2}\right), \mathcal{D}^{\alpha/2}(\phi v)_x\right\rangle = \langle u^n, \phi v\rangle,\\
    &w^0 = u^n.
    \end{cases}
\end{equation}
The above iterative scheme can be considered as a Galerkin scheme of a linear problem involving the bilinear form in $w^{\ell+1}$ and $u^n$. The scheme \eqref{solv} can be rewritten as 
\begin{equation}\label{solv2}
    \left\langle \left(1-\frac{\Delta t}{2}\mathcal{D}^\alpha_x \right)w^{\ell+1},\phi v\right\rangle  = \displaystyle\frac{\Delta t}{2}\left\langle\left(\frac{w^\ell+u^n}{2}\right)^2,(\phi v)_x\right\rangle+\frac{\Delta t}{2}\left\langle \mathcal{D}_x^{\alpha}u^n, \phi v \right\rangle + \langle u^n, \phi v\rangle.
\end{equation}
Let $d_{ij} = \left\langle \mathcal{D}^\alpha_x p_j,p_i \right\rangle = - \left\langle \mathcal{D}^\alpha_x p_i,p_j \right\rangle = - d_{ji}$, where $\{p_j\}$ form an orthogonal basis for $S_{\Delta x}$ and $\mathcal{D}^\alpha_x p_j = \mathcal{D}^\alpha (p_j)_x$ and we are choosing $v = \frac{p_i}{\phi}$ in \eqref{solv2}. Then
the matrix on the left hand side of \eqref{solv2}, say $I-\frac{\Delta t}{2}D$, is non-singular for sufficiently small $\Delta t$, where $D=(d_{ij})$ is skew-symmetric. Hence the existence of $w^{l+1}$ in \eqref{solv} is ensured. Afterwards, the solvability of the implicit scheme \eqref{CNscheme} is guaranteed by the following lemma.
\begin{lemma}\label{Lemma:solvability}
Let us consider the iterative scheme \eqref{solv} and assume that the following CFL condition holds:
\begin{equation}\label{cfl}
	\lambda\leq\frac{L}{2\sqrt{C_2}K\norm{u^n}_{2,\phi}},
\end{equation}
where the constant $L$ is chosen such that $0<L<1$ and $K$ is defined by
\begin{equation*}
    K = \frac{5-L}{1-L}>5.
\end{equation*}
 The constant $C_2$ in \eqref{cfl} is independent of $u^n$ and $\Delta x$, and $\lambda$ is given by $$\lambda = \frac{\Delta t}{\Delta x^{3/2}}.$$ Then there exists a function $u^{n+1}$ that solves \eqref{CNscheme}, and $\lim_{\ell\xrightarrow[]{}\infty}w^\ell= u^{n+1}$. Furthermore, the following estimate holds
\begin{equation}
    \norm{u^{n+1}}_{2,\phi} \leq K\norm{u^n}_{2,\phi}.
\end{equation}
\end{lemma}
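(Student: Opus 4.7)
The plan is to establish the lemma in three stages: a uniform a priori bound on the iterates, a contraction estimate ensuring convergence, and a passage to the limit in \eqref{solv}.

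First, I would prove by induction on $\ell$ that $\|w^\ell\|_{2,\phi} \leq K\|u^n\|_{2,\phi}$; the base case $\ell=0$ is immediate since $w^0 = u^n$. Given the bound at level $\ell$, I would test \eqref{solv} against $v = w^{\ell+1}$ (admissible since $S_{\Delta x}\subset H^{1+\alpha}(\mathbb{R})$). The dispersive term is handled exactly as $\mathcal{B}_2$ in the proof of Proposition \ref{LSE}: decomposing $\mathcal{D}^{\alpha/2}(\phi w^{\ell+1}) = \phi\mathcal{D}^{\alpha/2}w^{\ell+1} + S_{\alpha/2}(\phi)w^{\ell+1}$ via Lemma \ref{lemma1} and invoking the commutator estimate \eqref{Remd}, one extracts a non-negative Kato-type contribution that may be dropped, together with lower-order terms absorbable by $\|w^{\ell+1}\|_{2,\phi}^2$. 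The principal work is the convective term
\[
\frac{\Delta t}{2}\left\langle\left(\frac{w^\ell+u^n}{2}\right)^2,(\phi w^{\ell+1})_x\right\rangle,
\]
which I would estimate after splitting $(\phi w^{\ell+1})_x = \phi_x w^{\ell+1}+\phi(w^{\ell+1})_x$. The first piece is bounded directly by $\|w^{\ell+1}\|_{2,\phi}$ since $0\leq\phi_x\leq 1$, while for the second piece I would invoke a finite element inverse inequality, whose combined effect produces a factor of $\Delta x^{-3/2}$ (the origin of the scaling $\lambda = \Delta t/\Delta x^{3/2}$ and the constant $C_2$). Using the induction hypothesis $\|w^\ell\|_{2,\phi}\leq K\|u^n\|_{2,\phi}$ and the CFL condition \eqref{cfl}, the coefficients must then rearrange so that the algebraic identity $K = (5-L)/(1-L)$ yields exactly $\|w^{\ell+1}\|_{2,\phi}\leq K\|u^n\|_{2,\phi}$, closing the induction.

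Second, I would establish contractivity of the map $w^\ell\mapsto w^{\ell+1}$. Subtracting the equations for consecutive iterates and using the factorization
\[
\left(\tfrac{w^\ell+u^n}{2}\right)^2-\left(\tfrac{w^{\ell-1}+u^n}{2}\right)^2 = \tfrac{1}{4}(w^\ell-w^{\ell-1})(w^\ell+w^{\ell-1}+2u^n),
\]
I would test against $v = w^{\ell+1}-w^\ell$ and re-run the inverse-inequality/commutator analysis from Step 1. The uniform bound just established converts the right-hand side into $\gamma\|w^\ell-w^{\ell-1}\|_{2,\phi}\|w^{\ell+1}-w^\ell\|_{2,\phi}$ with a constant $\gamma$ proportional to $\lambda K\|u^n\|_{2,\phi}$; the CFL condition \eqref{cfl} forces $\gamma<1$. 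Since $S_{\Delta x}$ is complete in $\|\cdot\|_{2,\phi}$, this produces a limit $u^{n+1}\in S_{\Delta x}$ with $w^\ell\to u^{n+1}$.

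Finally, passing $\ell\to\infty$ in \eqref{solv} is routine: on the locally finite-dimensional element space the $\|\cdot\|_{2,\phi}$-limit coincides with convergence in every norm appearing in \eqref{solv}, so each bilinear and quadratic term converges and the limit satisfies \eqref{CNscheme}. The bound $\|u^{n+1}\|_{2,\phi}\leq K\|u^n\|_{2,\phi}$ is then inherited from Step 1 by lower semicontinuity. I expect the main obstacle to lie in Step 1, namely in bookkeeping the Cauchy--Schwarz weights on the convective term together with the commutator remainder $C_S$ carefully enough that the CFL scaling $\lambda = \Delta t/\Delta x^{3/2}$ enters exactly once with the coefficient that produces the prescribed $K = (5-L)/(1-L)$.
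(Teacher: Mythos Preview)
There is a genuine gap in your Step~1. When you test \eqref{solv} against $v=w^{\ell+1}$, the dispersive contribution is
\[
\Delta t\left\langle \mathcal{D}^{\alpha/2}\!\left(\tfrac{w^{\ell+1}+u^n}{2}\right),\mathcal{D}^{\alpha/2}(\phi w^{\ell+1})_x\right\rangle,
\]
which is \emph{not} of the symmetric form $\langle\mathcal{D}^{\alpha/2}w,\mathcal{D}^{\alpha/2}(\phi w)_x\rangle$ treated in $\mathcal{B}_2$. The diagonal piece in $w^{\ell+1}$ behaves as you say, but the cross term $\tfrac{\Delta t}{2}\langle\mathcal{D}^{\alpha/2}u^n,\mathcal{D}^{\alpha/2}(\phi w^{\ell+1})_x\rangle$ carries $1+\alpha/2$ derivatives that must land on $u^n$ or $w^{\ell+1}$. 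An inverse inequality then produces a factor of order $\Delta t\,\Delta x^{-(1+\alpha)}=\lambda\,\Delta x^{1/2-\alpha}$, which diverges as $\Delta x\to0$ for every $\alpha\in[1,2)$ and is therefore not controlled by \eqref{cfl}. This term is not a commutator remainder and cannot be absorbed by $\|w^{\ell+1}\|_{2,\phi}^2$; consequently your direct induction for the uniform bound does not close, and Step~2 (which relies on that bound) collapses with it.

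The paper sidesteps this by reversing your Steps~1 and~2 and running them simultaneously. Subtracting consecutive iterates and testing with $v=w^{\ell+1}-w^\ell$ makes the $u^n$ piece of the dispersive term cancel, so the difference equation \emph{does} have the clean Kato structure and yields the recursion \eqref{wll} with coefficient $\sqrt{C_2}\lambda\max\{\|w^\ell\|_{2,\phi},\|w^{\ell-1}\|_{2,\phi},\|u^n\|_{2,\phi}\}$. A separate bound $\|w^1\|_{2,\phi}\leq 3\|u^n\|_{2,\phi}$ is obtained by testing the $\ell=0$ equation with $v=(u^n+w^1)/2$, which again symmetrises the dispersive term. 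The uniform bound is then never proved directly: one shows by a joint induction that \eqref{1} and \eqref{2} hold, and $\|w^{m+1}\|_{2,\phi}\leq K\|u^n\|_{2,\phi}$ comes from telescoping the contraction,
\[
\|w^{m+1}\|_{2,\phi}\leq\sum_{\ell=0}^{m}\|w^{\ell+1}-w^\ell\|_{2,\phi}+\|u^n\|_{2,\phi}\leq\frac{4}{1-L}\|u^n\|_{2,\phi}+\|u^n\|_{2,\phi}=K\|u^n\|_{2,\phi},
\]
which is exactly where the constant $K=(5-L)/(1-L)$ arises.
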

\begin{proof}
We introduce the notation
 \begin{equation*}
	\mathcal{G}(u^n, \phi v) := \langle u^n, \phi v\rangle +\frac{\Delta t}{8}\left\langle(u^n)^2, (\phi v)_x\right\rangle- \frac{\Delta t}{2}\left\langle \mathcal{D}^{\alpha/2} u^n, \mathcal{D}^{\alpha/2}(\phi v)_x\right\rangle,
\end{equation*}
and consequently, the iterative scheme \eqref{solv} can be rewritten as 
 \begin{equation*}
     \langle w^{\ell+1},\phi v\rangle + \frac{\Delta t}{4}\left\langle(w^\ell u^n)_x,\phi v\right\rangle+\frac{\Delta t}{4}\left\langle w^\ell w^\ell_x,\phi v\right\rangle+\frac{\Delta t}{2}\left\langle \mathcal{D}^{\alpha/2} w^{\ell+1}, \mathcal{D}^{\alpha/2} (\phi v)_x\right\rangle = \mathcal{G}(u^n, \phi v),
 \end{equation*}
 for all $v\in S_{\Delta x}$. Since the parameter $\ell$ is not involved in the right hand side, we obtain
 \begin{equation*}
   \begin{split}
     \langle w^{\ell+1}-w^\ell,\phi v\rangle + \frac{\Delta t}{4}\left\langle \big(u^n(w^\ell - w^{\ell-1})\big)_x,\phi v\right\rangle &+\frac{\Delta t}{4}\left\langle w^\ell w^\ell_x -w^{\ell-1} w^{\ell-1}_x ,\phi v\right\rangle \\
     &+\frac{\Delta t}{2}\left\langle \mathcal{D}^{\alpha/2} (w^{\ell+1}-w^{\ell}), \mathcal{D}^{\alpha/2}(\phi v)_x\right\rangle = 0.
    \end{split}
 \end{equation*}
We choose $v=w^{\ell+1}-w^\ell =:w$ in the above equation to obtain
 \begin{equation}\label{temp:sol_0}
   \begin{split}
     \langle w,\phi w\rangle +\frac{\Delta t}{2} & \left\langle \mathcal{D}^{\alpha/2} w, \mathcal{D}^{\alpha/2}(\phi w)_x\right\rangle \\& = \underbrace{-\frac{\Delta t}{4}\left\langle(u^n(w^\ell - w^{\ell-1}))_x,\phi w\right\rangle}_{\mathcal{A}_1} \underbrace{-\frac{\Delta t}{4}\left\langle w^\ell w^\ell_x -w^{\ell-1} w^{\ell-1}_x ,\phi w\right\rangle}_{\mathcal{A}_2}.
    \end{split}
 \end{equation}
 Taking into account the estimate of the fractional Laplacian in Proposition \ref{LSE} and using the fact that $\phi\geq 1$, we have
 \begin{equation}\label{temp:sol_1}
     \left\langle \mathcal{D}^{\alpha/2} w, \mathcal{D}^{\alpha/2}(\phi w)_x\right\rangle \geq \|\sqrt{\phi_x}\mathcal{D}^{\alpha/2}w\|^2_{L^2(\R)} \geq 0.
 \end{equation}
 Afterwards, we estimate $\mathcal{A}_2$ using the Young's inequality
 \begin{equation*}
     \begin{split}
         \mathcal{A}_2 &= \frac{1}{4}\int_\mathbb{R} (-\Delta t)(w^\ell w^\ell_x -w^{\ell-1} w^{\ell-1}_x )\phi w \,dx\\
         &\leq \frac{\Delta t^2}{8}\int_\mathbb{R} ((w^\ell-w^{\ell-1}) w^\ell_x + w^{\ell-1} (w^\ell_x-w^{\ell-1}_x ))^2\phi \, dx + \frac{1}{8}\int_\mathbb{R} w^2\phi \,dx\\
         &\leq \frac{\Delta t^2}{4}\left(\|w^\ell_x\|^2_{L^\infty(\mathbb{R})}\|w^\ell-w^{\ell-1}\|^2_{2,\phi} + \|w^\ell_x-w^{\ell-1}_x \|^2_{L^\infty(\mathbb{R})}\|w^{\ell-1}\|^2_{2,\phi}\right)+ \frac{1}{8}\norm{w}^2_{2,\phi}\\
         &\leq \frac{C_2\Delta t^2}{4\Delta x^3}\left(\norm{w^\ell}^2_{L^2(\mathbb{R})}\|w^\ell-w^{\ell-1}\|^2_{2,\phi} + \|w^\ell-w^{\ell-1}\|^2_{L^2(\mathbb{R})}\|w^{\ell-1}\|^2_{2,\phi}\right)+ \frac{1}{8}\norm{w}^2_{2,\phi},
     \end{split}
 \end{equation*}
 where we have used the following inverse inequalities \cite[p. 142]{Ciarlet} in the last estimate
 \begin{align}\label{inv_inequality}
 	\norm{w^{\ell}_x}_{L^{\infty}(\R)} \leq \frac{C_1^{1/2}}{(\Delta x)^{1/2}} \norm{w^{\ell}_x}_{L^2(\R)}
 	\leq \frac{C_2^{1/2}}{(\Delta x)^{3/2}} \norm{w^{\ell}}_{L^2(\R)},
 \end{align}
 where $C_1$ and $C_2$ are independent of $w^{\ell}$ and $\Delta x$.
 Using the definition of $\lambda$, we end up with
 \begin{equation}\label{temp:sol_2}
     \mathcal{A}_2 \leq \frac{1}{8}\norm{w}^2_{2,\phi} +\frac{C_2\lambda^2}{2}
     \max\{ \norm{w^\ell}^2_{2,\phi},\|w^{\ell-1}\|^2_{2,\phi}\}\|w^\ell-w^{\ell-1}\|^2_{2,\phi}.
 \end{equation}
 In a similar way, we also get the estimate of $\mathcal{A}_1$
  \begin{equation*}
     \mathcal{A}_1 \leq \frac{\Delta t^2}{8} \int_\mathbb{R} \big((u^n(w^\ell-w^{\ell-1}))_x \big)^2 \phi\,dx + \int_\mathbb{R} w^2\phi \,dx
 \end{equation*}
 which in turn becomes
 \begin{equation}\label{temp:sol_3}
     \mathcal{A}_1 \leq \frac{1}{8}\norm{w}^2_{2,\phi} +\frac{C_2\lambda^2}{2}\norm{u^n}^2_{2,\phi}\|w^\ell-w^{\ell-1}\|^2_{2,\phi}.
 \end{equation}
 Combining the estimates \eqref{temp:sol_1}, \eqref{temp:sol_2} and \eqref{temp:sol_3} together in \eqref{temp:sol_0}, we have the following bound for $\ell\geq 1$,
 \begin{equation}\label{wll}
     \norm{w^{\ell+1}-w^\ell}^2_{2,\phi} \leq  \frac{2}{3}C_2\lambda^2\max\{ \norm{w^\ell}^2_{2,\phi},\|w^{\ell-1}\|^2_{2,\phi},\norm{u^n}^2_{2,\phi}\}\|w^\ell-w^{\ell-1}\|^2_{2,\phi}.
 \end{equation}
 For the bound of $w^1$, setting $\ell = 0$ in \eqref{solv}, we get
 \begin{equation*}
    \begin{split}
     \langle w^{1} - u^n,\phi v\rangle +\Delta t\left\langle \mathcal{D}^{\alpha}\left( \frac{w^{1}+u^n}{2}\right), (\phi v)_x\right\rangle = \frac{\Delta t}{2}\left\langle\left(u^n\right)^2,(\phi v)_x\right\rangle 
     &= -\Delta t\left\langle u^n u^n_x, \phi v \right\rangle.
     \end{split}
 \end{equation*}
 Choosing $v=\frac{u^n+w^1}{2}$ yields
\begin{equation*}
     \frac{1}{2}\int_{\mathbb{R}} \left((w^{1})^2 - (u^n)^2\right)\phi \,dx +\Delta t\int_{\mathbb{R}} \mathcal{D}^{\alpha}\left( \frac{w^{1}+u^n}{2}\right)\left(\phi \frac{u^n+w^1}{2}\right)_x \,dx
     = -\Delta t\int_{\mathbb{R}} u^n u^n_x \phi \frac{u^n+w^1}{2}\,dx .
 \end{equation*}
 Again, estimating the term involving fractional Laplacian as before, using the Young's inequality and inverse inequality leads to 
 \begin{equation}\label{w1}
     \|w^1\|^2_{2,\phi} \leq 4(1+C_2\lambda^2\norm{u^n}^2_{2,\phi})\norm{u^n}^2_{2,\phi}.
 \end{equation}
 Then we claim that the following holds
 \begin{eqnarray}\label{ids}
          \label{1}\norm{w^{\ell+1}-w^\ell}_{2,\phi}&\leq&L\|w^{\ell}-w^{\ell-1}\|_{2,\phi},\label{a}, \quad \ell\geq 1,\\
         \label{2}\norm{w^\ell}_{2,\phi} &\leq&K\norm{u^n}_{2,\phi}, \quad \ell\geq 1,\\
         \label{3}\|w^1\|_{2,\phi} &\leq& 3 \norm{u^n}_{2,\phi}.
 \end{eqnarray}
 We prove the claim by an induction argument. We use the CFL condition \eqref{cfl}
 and \eqref{w1} to get
 \begin{equation*}
     \begin{split}
         \|w^1\|_{2,\phi} &\leq \left(2+2\sqrt{C_2}\lambda\norm{u^n}_{2,\phi}\right)\norm{u^n}_{2,\phi}\\
         &\leq \left(2+\frac{L}{K}\right)\norm{u^n}_{2,\phi}\leq 3 \norm{u^n}_{2,\phi} \leq K \norm{u^n}_{2,\phi},
     \end{split}
 \end{equation*}
 and hence \eqref{2} holds for $\ell =1$. Setting $\ell =1 $ in \eqref{wll} and using \eqref{cfl}, we obtain
 \begin{equation*}
     \begin{split}
     \|w^{2}-w^1\|_{2,\phi} &\leq  \sqrt{\frac{2}{3}C_2}\lambda\max\{ \|w^1\|_{2,\phi},\norm{u^n}_{2,\phi}\}\|w^1-u^n\|_{2,\phi}\\
     & \leq  \left(3\sqrt{\frac{2}{3}C_2}\lambda \norm{u^n}_{2,\phi}\right)\|w^1-u^n\|_{2,\phi}\\
     & \leq \frac{3L}{2K}\|w^1-u^n\|_{2,\phi}\leq L \|w^1-u^n\|_{2,\phi}.
     \end{split}
 \end{equation*}
  Hence \eqref{1} holds for $\ell =1$. Afterwards, we assume that \eqref{1} and \eqref{2}  hold for $\ell =1,2,3,...,m$. Then
 \begin{equation*}
 \begin{split}
     \|w^{m+1}\|_{2,\phi} &\leq \sum_{\ell=0}^m \norm{w^{\ell+1}-w^\ell}_{2,\phi} + \|w^0\|_{2,\phi}\leq \|w^1-w^0\|_{2,\phi}\sum_{\ell=0}^mL^\ell + \|w^0\|_{2,\phi}\\
     &\leq 4\norm{u^n}_{2,\phi}\frac{1}{1-L} + \norm{u^n}_{2,\phi} = \frac{5-L}{1-L}\norm{u^n}_{2,\phi} =K\norm{u^n}_{2,\phi}.
  \end{split}   
 \end{equation*}
 Thus \eqref{2} holds for all $\ell$. The above estimate together with \eqref{wll} and \eqref{cfl} leads to
 \begin{equation*}
     \begin{split}
          \|w^{m+2}-w^{m+1}\|_{2,\phi} &\leq  \sqrt{\frac{2}{3}C_2}\lambda\max\{ \|w^{m+1}\|_{2,\phi},\|w^{m}\|_{2,\phi},\norm{u^n}_{2,\phi}\}\|w^{m+1}-w^{m}\|_{2,\phi}\\
          &\leq  \sqrt{\frac{2}{3}C_2} \lambda K\norm{u^n}_{2,\phi}\|w^{m+1}-w^{m}\|_{2,\phi} \leq L \|w^{m+1}-w^{m}\|_{2,\phi}.
     \end{split}
 \end{equation*}
 This shows that \eqref{1} holds for all $\ell$ as well. Since we have $0<L<1$, the sequence $\{w^\ell\}$ is a Cauchy sequence and it converges to $u^{n+1}$. Hence the proof follows.
\end{proof}
\section{Convergence of the scheme}\label{sec4}
In this section, we will prove the convergence of the proposed Crank-Nicolson Galerkin scheme \eqref{CNscheme}.  As we have mentioned before, the inherent Kato type local smoothing effect of the fractional KdV equation \eqref{fkdv} helps us to obtain the $H^{\alpha/2}_{\text{loc}}(\mathbb{R})$-estimate of the approximate solution induced by the scheme \eqref{CNscheme}.

\begin{lemma}\label{Lemma:H_alpha_bnd}
    Let $\lambda,$ $K$ and $L$ be defined as in Lemma \ref{Lemma:solvability}  and $u^n$ be the solution of the scheme \eqref{CNscheme}. Assume the time step $\Delta t$ satisfies 
    \begin{equation}\label{cfl2}
        \lambda\leq\frac{L}{2\sqrt{C_2}K\sqrt{Y}},
    \end{equation}
    where $Y=Y\big(\norm{u_0}_{L^2(\mathbb{R})} \big)$. 
    Then there exist time $T>0$ and a constant $C$, both depending only on $\norm{u_0}_{L^2(\mathbb{R})}$ such that for all $n$ satisfying 
    $n\Delta t\leq T$, the following estimate holds
    \begin{equation}\label{Stab}
        \norm{u^n}_{L^2(\mathbb{R})}\leq C\left(\norm{u_0}_{L^2(\mathbb{R})}\right).
    \end{equation}
    In addition, the approximation $u^n$ satisfies the following $H^{\alpha/2}$-estimate
    \begin{equation}\label{Dalp}
        \Delta t\sum_{\left(n+\frac{1}{2}\right)\Delta t\leq T}\left\|\mathcal{D}^{\alpha/2}u^{n+1/2}\right\|^2_{L^2([-R,R])}\leq C\left(\norm{u_0}_{L^2(\mathbb{R})}\right).
    \end{equation}
\end{lemma}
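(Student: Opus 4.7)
The plan is to bootstrap the Kato-type energy estimate of Proposition~\ref{LSE} together with the step-by-step growth control of Lemma~\ref{Lemma:solvability}, replacing the a priori $L^2$-bound $\|u^{n+1/2}\|_{L^2}\leq\|u^0\|_{L^2}$ (no longer available for~\eqref{CNscheme}) with a bound valid on a positive time interval. Since $u^{n+1/2}\in S_{\Delta x}$, testing~\eqref{CNscheme} with $v=u^{n+1/2}$ makes the effective test function $\phi u^{n+1/2}$, and the estimates of $\mathcal{B}_1$ and $\mathcal{B}_2$ from the proof of Proposition~\ref{LSE} transfer verbatim, yielding
\begin{equation*}
\tfrac{1}{2}\|u^{n+1}\|^2_{2,\phi}+\tfrac{5\Delta t}{6}\int_{\R}\phi_x|\mathcal{D}^{\alpha/2}u^{n+1/2}|^2\,dx\leq \tfrac{1}{2}\|u^n\|^2_{2,\phi}+P\bigl(\|u^{n+1/2}\|_{L^2}\bigr)\Delta t,
\end{equation*}
where $P$ is the cubic polynomial inherited from the $\mathcal{B}_1$ estimate.

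First I would fix $Y:=4(2+2R)\|u_0\|_{L^2}^2$, which exceeds $\|u^0\|_{2,\phi}^2$ since $u^0=\mathcal{P}u_0$ satisfies $\|u^0\|_{L^2}\leq\|u_0\|_{L^2}$ and $\phi\leq 2+2R$. Under the CFL~\eqref{cfl2} with this $Y$, the inductive hypothesis $\|u^k\|_{2,\phi}^2\leq Y$ for every $k\leq n$ activates the CFL~\eqref{cfl} of Lemma~\ref{Lemma:solvability} at step $n$, which delivers $u^{n+1}$ obeying $\|u^{n+1}\|_{2,\phi}\leq K\|u^n\|_{2,\phi}$. Consequently $\|u^{n+1/2}\|_{L^2}\leq\|u^{n+1/2}\|_{2,\phi}\leq \tfrac{K+1}{2}\sqrt{Y}$ (the first inequality uses $\phi\geq 1$), and the energy inequality reduces to
\begin{equation*}
\|u^{n+1}\|^2_{2,\phi}\leq \|u^n\|^2_{2,\phi}+\widetilde{C}(Y)\Delta t
\end{equation*}
for some $\widetilde{C}(Y)$ polynomial in $Y$. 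Telescoping gives $\|u^n\|^2_{2,\phi}\leq\|u^0\|^2_{2,\phi}+\widetilde{C}(Y)\,n\Delta t$, so the induction closes for $n\Delta t\leq T$ with $T:=(Y-\|u^0\|^2_{2,\phi})/\widetilde{C}(Y)>0$; both $Y$ and $T$ depend only on $\|u_0\|_{L^2}$ (and on the fixed $R$).

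Once the uniform weighted bound is in place, \eqref{Stab} follows from $\|u^n\|^2_{L^2(\R)}\leq\|u^n\|^2_{2,\phi}\leq Y$. For \eqref{Dalp} I would telescope the same energy inequality across all $n$ with $(n+\tfrac12)\Delta t\leq T$, discard the nonnegative boundary term $\tfrac{1}{2}\|u^{m+1}\|^2_{2,\phi}$ on the left, and use $\phi_x\equiv 1$ on $[-R,R]$ to conclude $\int_{\R}\phi_x|\mathcal{D}^{\alpha/2}u^{n+1/2}|^2\,dx\geq\|\mathcal{D}^{\alpha/2}u^{n+1/2}\|^2_{L^2([-R,R])}$. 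The main technical obstacle is the coupled choice of $Y$ and $T$: the CFL~\eqref{cfl2} forces $\lambda\lesssim Y^{-1/2}$, while the bootstrap requires $T\lesssim (Y-\|u^0\|^2_{2,\phi})/\widetilde{C}(Y)$ with $\widetilde{C}$ polynomial in $Y$, so one must verify that taking $Y$ a fixed multiple of $\|u^0\|^2_{2,\phi}$ produces a strictly positive $T$ and a nontrivial admissible $\lambda$ that depend only on $\|u_0\|_{L^2}$.
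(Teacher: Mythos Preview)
Your proposal is correct and follows the same overall strategy as the paper: derive the Kato-type energy inequality by testing \eqref{CNscheme} with $v=u^{n+1/2}$ (so that the $\mathcal{B}_1,\mathcal{B}_2$ estimates apply verbatim), then close an induction on $\|u^n\|_{2,\phi}^2$ by invoking Lemma~\ref{Lemma:solvability} at each step via \eqref{cfl2}. The only substantive difference lies in how the induction is propagated in time. The paper sets $a_n=\|u^n\|_{2,\phi}^2$, observes $a_{n+1}\le a_n+\Delta t\,f(a_{n+1/2})$ with $f(a)=C(a+a^2)$, and compares $a_n$ to the solution $y$ of the ODE $y'=f\bigl(\tfrac{K^2+1}{2}y\bigr)$, $y(0)=a_0$; the convexity of $y$ then gives $a_{m+1}\le y(t_m)+\Delta t\,f(\tfrac{K^2+1}{2}y(t_m))\le y(t_{m+1})$, and $T$ is taken as half the blow-up time of $y$. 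You instead fix $Y$ as an explicit multiple of $\|u_0\|_{L^2}^2$, freeze the nonlinear right-hand side at $\widetilde{C}(Y)$, and telescope linearly to get $T=(Y-\|u^0\|_{2,\phi}^2)/\widetilde{C}(Y)$. Your route is more elementary (no ODE comparison, no convexity argument) and makes the dependence of $Y$ and $T$ on the data completely explicit; the paper's ODE comparison is slightly sharper in principle since it tracks the nonlinear growth rather than bounding it uniformly, potentially yielding a longer $T$. Your closing remark about the ``coupled choice of $Y$ and $T$'' is overcautious: once $Y$ is fixed first (as you do), $T$ and the admissible $\lambda$ are determined and positive, so there is no genuine obstacle.
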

\begin{proof}
 We observe that  \eqref{GSl} holds by choosing $v= u^{n+1/2}$ in  \eqref{CNscheme} and consequently, using the estimates in \eqref{B_1 esti} and \eqref{B_2_esti}, we deduce
    \begin{equation*}
        \begin{split}
            \int_{\mathbb{R}}&\left(u^{n+1}\right)^2\phi\,dx + 2\Delta t\int_{\mathbb{R}}\left|\mathcal{D}^{\alpha/2}u^{n+1/2}\right|^2\phi_x\,dx\\
            &\leq\int_{\mathbb{R}}\left(u^{n}\right)^2\phi\,dx + \frac{2\Delta t}{3}\int_{\mathbb{R}}\left|\mathcal{D}^{\alpha/2}u^{n+1/2}\right|^2\phi_x\,dx+\frac{2\Delta t}{3} C_s \int_{\mathbb{R}}\left(u^{n+1/2}\right)^2\phi\,dx \\
            &\qquad\qquad+ \frac{\Delta t}{3} \int_{\mathbb{R}}\left(u^{n+1/2}\right)^2\phi\,dx
            + \frac{\Delta t}{3} C^2\left(\int_{\mathbb{R}}\left(u^{n+1/2}\right)^2\phi\,dx \right)^2
        \end{split}
    \end{equation*}
    which further turns into
     \begin{equation}\label{eq25}
        \begin{split}
            \int_{\mathbb{R}}&\left(u^{n+1}\right)^2\phi\,dx + \frac{4}{3}\Delta t\int_{\mathbb{R}}\left|\mathcal{D}^{\alpha/2}u^{n+1/2}\right|^2\phi_x\,dx\\
            &\leq\int_{\mathbb{R}}\left(u^{n}\right)^2\phi\,dx + C \Delta t \left[\int_{\mathbb{R}}\left(u^{n+1/2}\right)^2\phi\,dx +\left(\int_{\mathbb{R}}\left(u^{n+1/2}\right)^2\phi\,dx \right)^2\right].
        \end{split}
    \end{equation}
 By dropping the non-negative term involving the fractional derivative in \eqref{eq25} and denoting $a_n:=\int_{\mathbb{R}}\left(u^n\right)^2\phi\,dx$ implies 
    \begin{equation}\label{an}
        a_{n+1} \leq a_n + \Delta t f(a_{n+\frac{1}{2}}),
    \end{equation}
  where the function $f$ is given by
    \begin{equation*}
        f(a) = C (a+a^2).
    \end{equation*}
    It can be easily seen that $a_{n+\frac{1}{2}}\leq (a_n+a_{n+1})/2$ and hence $\{a_n\}$ satisfies the differential inequality
    \begin{equation*}
        \frac{da}{dt}\leq f(a).
    \end{equation*}
    We claim that 
    \begin{align}\label{claim_1}
    	a_n\leq y(t_n) \quad \text{for every } n\geq 0,
    \end{align}
    where $y$ satisfies the following initial value problem
    \begin{equation}\label{temp:ode}
        \begin{cases}
            \frac{dy}{dt} = f\left(\frac{K^2+1}{2}y\right),\qquad t>0,\\
            y(0) = a_0,
        \end{cases}
    \end{equation}
 in which $K$ occurs from the Lemma \ref{Lemma:solvability}. Thanks to the locally Lipschitz contintinuity of $f$,  the solution of \eqref{temp:ode} is unique and moreover, it is strictly increasing and convex. However, it blows up in finite time, say at $t = T_\infty$ and hence we choose $T=T_{\infty}/2$. We argue via induction to prove \eqref{claim_1}. We assume that \eqref{claim_1} holds for $n\in\{0,1,2,...,m\}$. As $0\leq a_m\leq y(T)$, \eqref{cfl2} implies that \eqref{cfl} holds, and thus Lemma \ref{Lemma:solvability} gives $a_{m+1}\leq K^2a_m$. As a consequence, using the convexity of $f$, we obtain
 \begin{equation*}
 	\begin{split}
 		a_{m+1}&\leq a_m+\Delta tf\left(\frac{K^2+1}{2}a_m\right) \leq y(t_m) +\Delta tf\left(\frac{K^2+1}{2}y(t_m)\right) 
 		\leq y(t_{m+1}).
 	\end{split}
 \end{equation*}
 Hence the claim \eqref{claim_1} follows. 
 Since $\phi\geq 1$, we obtain the $L^2$-stability estimate \eqref{Stab}.
 Afterwards, summing \eqref{eq25} over $n$ yields the estimate
    \begin{equation*}
        \Delta t\sum_{\left(n+\frac{1}{2}\right)\Delta t\leq T}\left\|\mathcal{D}^{\alpha/2}u^{n+1/2}\right\|^2_{L^2([-R,R])}\leq C\left(\norm{u_0}_{L^2(\mathbb{R})}\right).
    \end{equation*}
    This completes the proof.
\end{proof}

\subsection{Bounds on temporal derivatives} 
In order to carry out the convergence analysis, we require certain temporal derivative bounds on the approximated solution of \eqref{CNscheme}. For this, we need to introduce a suitable projection operator. By adopting the approach from the monograph of Ciarlet \cite{Ciarlet}, we can ensure the existence of the projection map $P$ on $S_{\Delta x}$. More precisely, we have 
\begin{lemma}\label{Lemma:projection}
    Let $\psi\in C_c^\infty(-R,R)$ and $\phi$ be defined by properties $(i)-(iv)$ in Section \ref{sec2}. Then there exists a projection operator $P:C_c^\infty(-R,R)\xrightarrow[]{} S_{\Delta x}\cap C_c(-R,R)$ such that 
    \begin{equation}\label{projprop}
        \int_{\mathbb{R}}uP(\psi)\phi\,dx  = \int_{\mathbb{R}}u\psi\phi\,dx, \qquad u\in S_{\Delta x}.
    \end{equation}
    In addition, $P$ satisfies the bounds 
    \begin{eqnarray}\label{Proj}
        \begin{cases}
            \label{P1}\|P(\psi)\|_{L^2(\mathbb{R})}\leq C\|\psi\|_{L^2(\mathbb{R})},\\
            \label{P2}\|P(\psi)\|_{H^1(\mathbb{R})}\leq C\|\psi\|_{H^1(\mathbb{R})},\\
            \label{P3}\|P(\psi)\|_{H^2(\mathbb{R})}\leq C\|\psi\|_{H^2(\mathbb{R})},
        \end{cases}
    \end{eqnarray}
    where the constant $C$ is independent of $\Delta x$.\\
    Moreover, there holds
    \begin{equation}\label{fracP}
        \|P(\psi)\|_{H^{\alpha/2}(\mathbb{R})}\leq C\|\psi\|_{H^{\alpha/2}(\mathbb{R})}.
    \end{equation}
\end{lemma}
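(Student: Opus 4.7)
The plan is to define $P$ as the weighted $L^2$-projection, with respect to the inner product $\langle \cdot, \cdot \rangle_\phi$, onto the finite-dimensional subspace $S_{\Delta x} \cap C_c(-R,R)$. Since $1 \leq \phi \leq 2+2R$ by property (i) of Section \ref{sec2}, the weighted inner product is equivalent to the standard $L^2$ inner product on this subspace, so existence and uniqueness of $P$ are immediate by linear algebra (Lax--Milgram on a finite-dimensional space). Because both $P(\psi)$ and $\psi$ are supported in $(-R,R)$, the orthogonality in \eqref{projprop} for a general $u\in S_{\Delta x}$ reduces to orthogonality against the restriction $u|_{[-R,R]}$, which is exactly what the weighted projection delivers by construction. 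I would follow Ciarlet's monograph for the underlying finite element approximation machinery.

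For the $L^2$ bound in \eqref{Proj}, substituting $u=P(\psi)\in S_{\Delta x}$ in \eqref{projprop} and applying Cauchy--Schwarz gives $\|P(\psi)\|_{2,\phi}^2 \leq \|P(\psi)\|_{2,\phi}\|\psi\|_{2,\phi}$, hence $\|P(\psi)\|_{L^2}\leq C\|\psi\|_{L^2}$ by the norm equivalence. For the $H^1$ and $H^2$ bounds, I would introduce a stable Scott--Zhang (or Cl\'ement) quasi-interpolant $I_h\psi\in S_{\Delta x}\cap C_c(-R,R)$ satisfying the standard estimates $\|\psi - I_h\psi\|_{L^2}\leq C(\Delta x)^k\|\psi\|_{H^k}$ and $\|I_h\psi\|_{H^k}\leq C\|\psi\|_{H^k}$ for $k=1,2$. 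Since $P$ fixes elements of $S_{\Delta x}\cap C_c(-R,R)$, write $P(\psi)=I_h\psi+P(\psi-I_h\psi)$ and combine the inverse inequality $\|v\|_{H^k}\leq C(\Delta x)^{-k}\|v\|_{L^2}$ for $v\in S_{\Delta x}$ (cf.\ \eqref{inv_inequality}) with the $L^2$ stability of $P$:
\begin{equation*}
\|P(\psi-I_h\psi)\|_{H^k}\leq C(\Delta x)^{-k}\|P(\psi-I_h\psi)\|_{L^2}\leq C(\Delta x)^{-k}\|\psi-I_h\psi\|_{L^2}\leq C\|\psi\|_{H^k},
\end{equation*}
and combine with $\|I_h\psi\|_{H^k}\leq C\|\psi\|_{H^k}$ via the triangle inequality.

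For the fractional bound \eqref{fracP}, a direct application of the interpolation inequality \eqref{embed:inter} to the function $P(\psi)$ only yields $\|P(\psi)\|_{H^{\alpha/2}}\leq C\|\psi\|_{L^2}^{1-\alpha/2}\|\psi\|_{H^1}^{\alpha/2}$, which is not compatible with the desired right-hand side $C\|\psi\|_{H^{\alpha/2}}$ (since $H^1\hookrightarrow H^{\alpha/2}$ is the wrong direction). The cleaner route is to upgrade to operator interpolation on the Sobolev scale: the bounds \eqref{P1} and \eqref{P2} say $P$ is uniformly bounded in $\Delta x$ on $H^0=L^2$ and on $H^1$, and since $\{H^s(\mathbb{R})\}_{s\in[0,1]}$ forms a complex (equivalently real) interpolation scale, $P$ extends to a bounded operator on $H^{\alpha/2}$ for $\alpha/2\in[1/2,1)$ with operator norm controlled by the geometric mean of the $L^2$ and $H^1$ operator norms. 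The main technical obstacle is ensuring that the quasi-interpolant $I_h$ preserves compact support in $(-R,R)$ while retaining full $L^2$, $H^1$, and $H^2$ stability; once this is in place, the fractional bound follows by operator interpolation rather than by the pointwise interpolation inequality \eqref{embed:inter} already cited in the paper.
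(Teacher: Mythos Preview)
The paper does not give a proof of this lemma at all; immediately after the statement it writes ``For the proof of \eqref{Proj}--\eqref{fracP}, one can refer to \cite{Galtungc, StabOnProj}.'' Your sketch is therefore more detailed than what the paper offers, and it is in line with what those references contain: the construction of $P$ as a weighted $L^2$-projection, the $H^k$-stability via a stable quasi-interpolant plus inverse inequality (this is the Ciarlet-type argument the paper alludes to), and in particular the fractional bound \eqref{fracP} via \emph{operator} interpolation of the scale $\{H^s\}$ between the endpoint bounds \eqref{P1} and \eqref{P2}, which is exactly the content of Steinbach \cite{StabOnProj}. Your remark that the pointwise inequality \eqref{embed:inter} alone is insufficient and that one must interpolate the operator is correct and is the right way to read the citation.

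There is one genuine soft spot in your reduction for \eqref{projprop}. Projecting in $\langle\cdot,\cdot\rangle_\phi$ onto $S_{\Delta x}\cap C_c(-R,R)$ gives orthogonality of $P(\psi)-\psi$ against that subspace, but the lemma asks for orthogonality against every $u\in S_{\Delta x}$. Your sentence ``reduces to orthogonality against the restriction $u|_{[-R,R]}$'' hides the issue: $u|_{[-R,R]}$ is generally not an element of $S_{\Delta x}\cap C_c(-R,R)$, because basis functions whose support straddles $\pm R$ lie outside that subspace yet still contribute to $\int u(P(\psi)-\psi)\phi\,dx$. The standard fix (and what Galtung does) is to use that $\psi\in C_c^\infty(-R,R)$ has $\operatorname{supp}\psi\subset[-R+\delta,R-\delta]$; choosing the mesh so that $\pm R$ are nodes, one shows that on $\operatorname{supp}(P(\psi)-\psi)$ every $u\in S_{\Delta x}$ coincides with some $v\in S_{\Delta x}\cap C_c(-R,R)$, which closes the gap. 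You should make this step explicit rather than absorb it into ``by construction.''
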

For the proof of \eqref{Proj}-\eqref{fracP}, one can refer to \cite{Galtungc, StabOnProj}.
From the definition of the dual norms in $H^{-s}(\mathbb{R})$ and $H^{-s}([-R,R])$  for all $s\in\mathbb{R}^+$, we have the estimates 
\begin{equation}\label{siq1}
	\int_{\mathbb{R}}  fg\,dx \leq \|f\|_{H^{-s}(\mathbb{R})}\|g\|_{H^{s}(\mathbb{R})} \quad\text{for all } f\in H^{-s}(\mathbb{R}), g\in H^{s}(\mathbb{R}),
\end{equation}
 and for $f\in H^{-s}([-R,R]), g\in H^{s}([-R,R])$, there holds
\begin{equation}\label{siq2}
	\int_{-R}^{R} fg\,dx \leq \|f\|_{H^{-s}([-R,R])}\|g\|_{H^{s}([-R,R])}.
\end{equation}
With the help of Lemma  \ref{Lemma:projection}, we have the estimates on the temporal derivatives stated as follows:
\begin{lemma}
    Let $\{u^n\}$ be the solution of the scheme \eqref{CNscheme}. Suppose the hypothesis of Lemma \ref{Lemma:H_alpha_bnd} holds. Then we have the estimate
    \begin{equation}\label{TempDB}
        \|D^+_tu^n\phi\|_{H^{-2}([-R,R])}\leq C \left(\norm{u_0}_{L^2(\mathbb{R})},R\right) \Big(1+\| u_x^{n+1/2}\|_{L^2([-R,R])} \Big),
    \end{equation}
    where $D_t^+$ is the forward time difference operator 
    \begin{equation*}
        D_t^+ u^n= \frac{u^{n+1}-u^n}{\Delta t}.
    \end{equation*}
\end{lemma}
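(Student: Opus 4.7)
The plan is to dualise against an arbitrary $\psi\in C_c^\infty(-R,R)$ with $\|\psi\|_{H^2(-R,R)}\le 1$, to bound $|\langle D_t^+ u^n\phi,\psi\rangle|$ uniformly in such $\psi$, and then to take the supremum to recover the $H^{-2}$-norm. Since $D_t^+ u^n\in S_{\Delta x}$, the projection identity \eqref{projprop} of Lemma~\ref{Lemma:projection} lets me replace $\phi\psi$ by $\phi P(\psi)$ in the pairing, after which $P(\psi)\in S_{\Delta x}$ is an admissible test function in the scheme \eqref{CNscheme}. Dividing the scheme by $\Delta t$ with $v=P(\psi)$ then gives
$$\langle D_t^+ u^n\phi,\psi\rangle \;=\; \left\langle\tfrac{(u^{n+1/2})^2}{2},(\phi P(\psi))_x\right\rangle \;-\; \left\langle \mathcal{D}^{\alpha/2}u^{n+1/2},\mathcal{D}^{\alpha/2}(\phi P(\psi))_x\right\rangle,$$
so the task reduces to estimating each piece by $C(\|u_0\|_{L^2},R)\big(1+\|u_x^{n+1/2}\|_{L^2([-R,R])}\big)\|\psi\|_{H^2}$.

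For the convective term I would integrate by parts to rewrite it as $-\int u^{n+1/2}u^{n+1/2}_x\,\phi P(\psi)\,dx$, then apply Cauchy--Schwarz and the $L^2$-bound \eqref{Stab}. The factor $\|\phi P(\psi)\|_{L^\infty}$ is controlled by $C(R)\|\psi\|_{H^2}$ via the one-dimensional embedding $H^1\hookrightarrow L^\infty$, the projection bound $\|P(\psi)\|_{H^1}\le C\|\psi\|_{H^1}$ from Lemma~\ref{Lemma:projection}, and the boundedness of $\phi$ on the support $[-R,R]$ of $P(\psi)$. Since the integrand is supported in $[-R,R]$, this produces exactly the $\|u_x^{n+1/2}\|_{L^2([-R,R])}\|\psi\|_{H^2}$ contribution appearing in \eqref{TempDB}.

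For the dispersive term I would first use properties \ref{itm2.1}, \ref{itm2.2}, and \ref{itm2.4} of Lemma~\ref{lemma1} to pull all derivatives onto the test function, so that
$$\left\langle \mathcal{D}^{\alpha/2}u^{n+1/2},\mathcal{D}^{\alpha/2}(\phi P(\psi))_x\right\rangle = \left\langle u^{n+1/2}, \mathcal{D}^\alpha(\phi P(\psi))_x\right\rangle.$$
I would then split $(\phi P(\psi))_x=\phi_x P(\psi)+\phi P(\psi)_x$ and iterate the commutator identity $\mathcal{D}^{\alpha/2}h=h\mathcal{D}^{\alpha/2}+S_{\alpha/2}(h)$ (twice, so as to express $\mathcal{D}^\alpha$-against-$\phi$ in terms of $\phi\mathcal{D}^\alpha$ together with $S_{\alpha/2}$-corrections that only invoke $\mathcal{D}^{\alpha/2}$). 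Combined with the operator-norm bound \eqref{Remd}, Cauchy--Schwarz against $\|u^{n+1/2}\|_{L^2}$ (bounded by \eqref{Stab}), and the $H^s$-stability of $P$ for $s\in\{0,1,2\}$ from Lemma~\ref{Lemma:projection}, the resulting pieces are dominated by $C(R,\|u_0\|_{L^2})\|\psi\|_{H^2}$, producing the additive constant in \eqref{TempDB}.

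The main difficulty lies precisely in the dispersive estimate just sketched: for $\alpha\in(1,2)$ the naive bound $\|\mathcal{D}^\alpha(\phi P(\psi))_x\|_{L^2}\lesssim \|\phi P(\psi)\|_{H^{\alpha+1}}$ would require more than $H^2$-regularity of the test function, which is not available, so one cannot simply dump all derivatives onto $P(\psi)$. The commutator $S_{\alpha/2}$ supplied by \eqref{Remd} must therefore be used carefully to transfer the excess $\alpha-1$ derivatives onto the smooth weight $\phi$ rather than onto the rough test function, leaving only $H^{\alpha/2+1}$- and lower-order norms of $P(\psi)$ behind, all of which are dominated by $\|P(\psi)\|_{H^2}\le C\|\psi\|_{H^2}$. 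Once this bookkeeping is completed, the supremum over admissible $\psi$ yields \eqref{TempDB}.
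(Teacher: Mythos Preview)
Your overall framework is correct and matches the paper: dualise against $\psi\in C_c^\infty(-R,R)$, invoke the projection identity \eqref{projprop} to replace $\phi\psi$ by $\phi P(\psi)$, insert $v=P(\psi)$ into the scheme, and estimate the two resulting terms. However, the distribution of work between the convective and dispersive terms is swapped relative to the paper, and your treatment of the dispersive term contains a genuine gap.

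For the convective term, the paper does \emph{not} integrate by parts: it keeps $\tfrac12(u^{n+1/2})^2$ intact and bounds $\|(\phi P(\psi))_x\|_{L^\infty}$ via the Sobolev embedding and the $H^2$-stability of $P$, yielding the additive constant in \eqref{TempDB}. Your route (integrate by parts, then Cauchy--Schwarz against $u_x^{n+1/2}$) also works and instead produces the $\|u_x^{n+1/2}\|_{L^2([-R,R])}$ contribution. Either is fine.

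The gap is in the dispersive term. You propose to pair $u^{n+1/2}$ against $\mathcal{D}^\alpha(\phi P(\psi))_x$ and then use the commutator $S_{\alpha/2}$ to ``transfer the excess $\alpha-1$ derivatives onto $\phi$''. But the commutator identity only says $\mathcal{D}^{\alpha/2}(\phi g)=\phi\,\mathcal{D}^{\alpha/2}g+S_{\alpha/2}(\phi)g$; iterating it twice still leaves the principal term $\phi\,\mathcal{D}^\alpha g_x$ carrying the full $1+\alpha$ derivatives on $g=P(\psi)$. To bound $\langle u^{n+1/2},\phi\,\mathcal{D}^\alpha P(\psi)_x\rangle$ by Cauchy--Schwarz against $\|u^{n+1/2}\|_{L^2}$ you would need $\|P(\psi)\|_{H^{1+\alpha}}$, which exceeds $H^2$ whenever $\alpha>1$. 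The $L^2$-boundedness of $S_{\alpha/2}$ in \eqref{Remd} controls only the remainder pieces, not this principal part, so the bookkeeping you allude to cannot close with the tools available.

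The paper sidesteps this entirely: instead of pushing all derivatives onto the test side, it integrates by parts once to move $\partial_x$ onto $u^{n+1/2}$, obtaining $-\langle u_x^{n+1/2},\mathcal{D}^\alpha(\phi P(\psi))\rangle$. Now only $\|\mathcal{D}^\alpha(\phi P(\psi))\|_{L^2}\lesssim\|P(\psi)\|_{H^\alpha}$ is required, and since $\alpha<2$ this is dominated by $\|P(\psi)\|_{H^2}\le C\|\psi\|_{H^2}$ via Lemma~\ref{Lemma:projection}. No commutators are needed, and this term supplies the $\|u_x^{n+1/2}\|_{L^2([-R,R])}$ factor in \eqref{TempDB}.
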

\begin{proof}
    The scheme \eqref{CNscheme} can be rewritten as
    \begin{equation}\label{recn}
      \left\langle D_t^+ u^n , \phi v \right\rangle  = \left\langle \frac{(u^{n+1/2})^2}{2},(\phi v)_x\right\rangle - \left\langle \mathcal{D}^{\alpha/2} u^{n+1/2} , \mathcal{D}^{\alpha/2}(\phi v)_x\right\rangle, \quad v\in S_{\Delta x}.
    \end{equation}
   We choose $v=P(\psi)$ in \eqref{recn}, where $\psi\in C_c^\infty(-R,R)$ and $P$ is the projection operator described in the Lemma \ref{Lemma:projection}, to obtain
    \begin{equation}\label{temp:t_der_1}
      \left\langle D_t^+ u^n , \phi P(\psi) \right\rangle  = \left\langle \frac{(u^{n+1/2})^2}{2},(\phi P(\psi))_x\right\rangle - \left\langle \mathcal{D}^{\alpha/2} u^{n+1/2} , \mathcal{D}^{\alpha/2}(\phi P(\psi))_x\right\rangle.
    \end{equation}
   Using the estimates in \eqref{Proj}, we have
    \begin{equation*}
        \begin{split}
            \int_\mathbb{R} \left(u^{n+1/2}\right)^2 & (\phi P(\psi))_x\,dx\\
            & \leq \Big(\|P(\psi)\|_{L^\infty([-R,R])} + \|P(\psi)_x\|_{L^\infty([-R,R])}(2+2R) \Big)\int_{-R}^R \left(u^{n+1/2}\right)^2 \,dx\\
            & \leq \Big(\|P(\psi)\|_{H^1([-R,R])} + \|P(\psi)_x\|_{H^1([-R,R])}(2+2R)\Big) \left\|u^{n+1/2}\right\|^2_{L^2(\mathbb{R})} \\
            &\leq  C(\norm{u_0}_{L^2(\mathbb{R})}\|,R)\|\psi\|_{H^2([-R,R])},
        \end{split}
    \end{equation*}
where we have used the Sobolev inequality and the bound \eqref{Stab}.
We also deduce the following estimate 
    \begin{equation*}
        \begin{split}
            - \left\langle \mathcal{D}^{\alpha/2} u^{n+1/2} , \mathcal{D}^{\alpha/2} (\phi P(\psi))\right\rangle & 
            =  \left\langle u_x^{n+1/2} , \mathcal{D}^{\alpha} (\phi P(\psi))_x\right\rangle \\
            &\leq \| u_x^{n+1/2}\|_{L^2([-R,R])}\| \mathcal{D}^{\alpha}(\phi P(\psi))\|_{L^2([-R,R])}\\  
            & \leq C( \| u_0\|_{L^2(\mathbb{R})},R) \| u_x^{n+1/2}\|_{L^2([-R,R])} \|  P(\psi)\|_{H^{\alpha}([-R,R])}\\  
             & \leq C( \| u_0\|_{L^2(\mathbb{R})},R) \| u_x^{n+1/2}\|_{L^2([-R,R])}  \| \psi\|_{H^{2}([-R,R])},\\
        \end{split}
    \end{equation*}
  where we have used \eqref{siq1}, and estimates \eqref{fracP} and \eqref{Dalp}.
 Combining the above estimates together in \eqref{temp:t_der_1}, we end up with 
    \begin{align*}
    \left| \int_{-R}^{R} D_t^+ u^n \phi P(\psi)\,dx \right| & =  \left|\int_{-R}^{R} D_t^+ u^n \phi \psi\,dx \right|  \\
    & \leq C\left( \| u_0\|_{L^2(\mathbb{R})},R\right) \Big(1+\| u_x^{n+1/2}\|_{L^2([-R,R])} \Big) \|\psi\|_{H^2([-R,R])},
    \end{align*}
   and the estimate \eqref{TempDB} follows.
\end{proof}
\subsection{Convergence to a weak solution.}
We start by defining the weak solution of the Cauchy problem \eqref{fkdv}.
\begin{definition}\label{defn:weak_soln}
    Let $Q>0$ be any given number and $u_0\in L^2(\mathbb{R})$. Then $u\in L^2(0,T;H^{\alpha/2}(-Q,Q))$ is said to be a weak solution of \eqref{fkdv} in the region $(-Q,Q)\times[0,T)$ if 
    \begin{equation}\label{weaks}
        \int_0^T \int_{\mathbb{R}}\left(\varphi_t u + \varphi_x\frac{u^2}{2} - \mathcal{D}^{\alpha/2} \varphi_x \mathcal{D}^{\alpha/2}u\right)\,dx\,dt + \int_{\mathbb{R}}\varphi(x,0)u_0(x)\,dx = 0,
    \end{equation}
    for all $\varphi\in C_c^\infty((-Q,Q)\times[0,T)) $.
\end{definition}
Next we define the approximate solution $u_{\Delta x}\in S_{\Delta x}$ by the interpolation formula
\begin{equation}\label{appsol}
u_{\Delta x}(x,t) = 
    \begin{cases}
        u^{n-1/2}(x)+(t-t_{n-1/2})D^+_t u^{n-1/2}(x), \qquad &t\in [t_{n-1/2},t_{n+1/2}),n\geq 1 ,\\
        u^0(x) + 2t\frac{u^{1/2}(x)-u^0(x)}{\Delta t}, \qquad &t\in[0,t_{1/2}).
    \end{cases}
\end{equation}
Hereby, our aim is to demonstrate the convergence of $u_{\Delta x}(x,t)$ to a weak solution of \eqref{fkdv}.
More precisely, we have the following result.
 \begin{theorem}
     Let $u_0 \in {L^2(\mathbb{R})}$ and $\{u^n\}_{n\in\mathbb{N}}$ be a sequence of function defined by the scheme \eqref{CNscheme}. Further, assume that $\Delta t = \mathcal{O}(\Delta x^2)$. Then there exist a time $T>0$ and a contant $C$, depending only on $R$ and $\norm{u_0}_{L^2(\mathbb{R})}$ such that 
      \begin{align}
         \norm{u_{\Delta x}}_{L^\infty(0,T;L^2([-R,R]))}&\leq C(\norm{u_0}_{L^2(\mathbb{R})},R), \label{b1}\\
        \norm{u_{\Delta x}}_{L^2(0,T;H^{\alpha/2}([-R,R]))}&\leq C(\norm{u_0}_{L^2(\mathbb{R})},R), \label{b2}\\
         \norm{\partial_t (u_{\Delta x} \phi)}_{L^2(0,T;H^{-2}([-R,R]))} &\leq C(\norm{u_0}_{L^2(\mathbb{R})},R), \label{b3}
      \end{align}
     where $u_{\Delta x}$ is described by \eqref{appsol}. Moreover, there exist a sequence $\{\Delta x_j\}_{j=1}^\infty$ and 
     as $\Delta x_j \xrightarrow[j\xrightarrow{}\infty]{} 0$
     \begin{equation}\label{congg1}
         u_{\Delta x_j}\xrightarrow[]{} u \hspace{0.1cm} strongly \hspace{0.1cm} in \hspace{0.1cm} L^2(0,T;L^2([-R,R])),
     \end{equation}
     where $u \in L^2(0,T;L^2([-R,R]))$ is a weak solution of the Cauchy problem \eqref{fkdv} in the sense of Definition \ref{defn:weak_soln} for $Q=R-1$.
 \end{theorem}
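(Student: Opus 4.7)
The plan has three main steps: establish the three uniform bounds \eqref{b1}--\eqref{b3}, apply Aubin-Lions-type compactness to extract a strongly convergent subsequence, and then pass to the limit in the scheme to recover \eqref{weaks}.

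First, \eqref{b1} follows from the uniform $L^2$-stability \eqref{Stab} since the interpolant \eqref{appsol} is a convex combination of the half-step values $u^{n\pm 1/2}$; \eqref{b2} follows from the Kato type smoothing estimate \eqref{Dalp} because the $L^2_t$-norm of the piecewise-linear interpolant is dominated by the Riemann sum $\Delta t\sum_n\|u^{n+1/2}\|^2_{H^{\alpha/2}([-R,R])}$; and \eqref{b3} follows from the observation that on each time slab $\partial_t u_{\Delta x}$ equals the forward difference $D_t^+ u^{n-1/2}$, so
\begin{equation*}
\|\partial_t(u_{\Delta x}\phi)\|^2_{L^2(0,T;H^{-2}([-R,R]))}=\Delta t\sum_n\|D_t^+ u^{n-1/2}\phi\|^2_{H^{-2}([-R,R])},
\end{equation*}
whose right-hand side is controlled by \eqref{TempDB} combined with \eqref{b2}, the CFL coupling $\Delta t=\mathcal{O}(\Delta x^2)$ being used to absorb any inverse-inequality loss when converting fractional to first-order control.

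For compactness, the family $\{u_{\Delta x}\phi\}$ is uniformly bounded in $L^2(0,T;H^{\alpha/2}([-R,R]))$ with time derivative uniformly bounded in $L^2(0,T;H^{-2}([-R,R]))$. Because $[-R,R]$ is bounded and $\alpha/2>0$, the chain $H^{\alpha/2}([-R,R])\hookrightarrow\hookrightarrow L^2([-R,R])\hookrightarrow H^{-2}([-R,R])$ holds with compact first inclusion, so the Aubin-Lions-Simon lemma supplies a subsequence $u_{\Delta x_j}\phi\to u\phi$ strongly in $L^2(0,T;L^2([-R,R]))$. Since $1\leq\phi\leq 2+2R$ on $[-R,R]$, dividing gives the convergence \eqref{congg1}; weak compactness then identifies $u\in L^2(0,T;H^{\alpha/2}([-R,R]))$ and $\mathcal{D}^{\alpha/2}u_{\Delta x_j}\rightharpoonup\mathcal{D}^{\alpha/2}u$ in $L^2(0,T;L^2([-R,R]))$.

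To pass to the limit, fix $\varphi\in C_c^\infty((-Q,Q)\times[0,T))$ with $Q=R-1$, so that $\mathrm{supp}(\varphi)\subset[-R+1,R-1]$ where $\phi$ is smooth. Set $\psi_n(x):=\varphi(x,t_{n+1/2})/\phi(x)\in C_c^\infty(-R,R)$ and test \eqref{CNscheme} against $v^n:=P(\psi_n)\in S_{\Delta x}$ from Lemma \ref{Lemma:projection}. The defining property \eqref{projprop} converts $\langle u^{n+1},\phi v^n\rangle$ into $\langle u^{n+1},\varphi(\cdot,t_{n+1/2})\rangle$, and a discrete summation by parts in time shifts the forward difference from $u^n$ onto $\varphi$, producing the initial-data term $\int_{\R}u_0\varphi(\cdot,0)\,dx$. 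In the limit along the subsequence of Step 2, the temporal term converges to $\int_0^T\!\int_{\R} u\,\varphi_t\,dx\,dt$ by strong convergence and smoothness of $\varphi$; the nonlinear term converges through the factorization $u_{\Delta x_j}^2-u^2=(u_{\Delta x_j}-u)(u_{\Delta x_j}+u)$, using strong $L^2(0,T;L^2)$ convergence together with the uniform bound \eqref{b1} via Cauchy-Schwarz; and the fractional dispersion term converges by the weak-strong pairing of $\mathcal{D}^{\alpha/2}u_{\Delta x_j}\rightharpoonup\mathcal{D}^{\alpha/2}u$ with $\mathcal{D}^{\alpha/2}(\phi v^n)_x\to\mathcal{D}^{\alpha/2}\varphi_x$, the latter resting on the projection stability \eqref{fracP}.

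The principal technical obstacle is the fractional dispersion term: the scheme delivers $\mathcal{D}^{\alpha/2}(\phi v^n)_x$, whereas the weak formulation \eqref{weaks} only features $\mathcal{D}^{\alpha/2}\varphi_x$. Bridging the two requires the commutator splitting $\mathcal{D}^{\alpha/2}(\phi v^n)=\phi\,\mathcal{D}^{\alpha/2}v^n+S_{\alpha/2}(\phi)v^n$ from Section \ref{sec2} and the operator-norm bound \eqref{Remd}, so that the commutator contribution vanishes as $\Delta x\to 0$. A secondary subtlety is that the nonlinearity and dispersion in \eqref{CNscheme} are evaluated at the half-step $u^{n+1/2}$, while the strong convergence in Step 2 is phrased for the piecewise-linear interpolant $u_{\Delta x}$; one must verify that $\|u^{n+1/2}-u_{\Delta x}(\cdot,t_{n+1/2})\|_{L^2}=\mathcal{O}(\Delta t)$, which is immediate from the definition \eqref{appsol} together with the $L^2$-control of the forward differences $D_t^+ u^n$.
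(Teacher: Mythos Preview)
Your treatment of \eqref{b1}--\eqref{b3} and the Aubin--Lions step matches the paper and is correct. The passage to the limit, however, takes a different route from the paper and has a genuine gap in the dispersion term. The paper does \emph{not} use the weighted projection of Lemma~\ref{Lemma:projection} for this step; it uses the standard $L^2$-projection $\mathcal{P}v$ with the approximation property \eqref{projinq}, writes the time integral as a sum over slabs, and decomposes the discrepancy between the continuous weak form and the scheme into ten explicit error terms $\mathcal{I}_i^{\Delta x},\mathcal{E}_i^{\Delta x}$, each bounded via \eqref{projinq}, inverse inequalities, and \eqref{TempDB}. No commutator enters that argument.

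In your route, the commutator splitting is a red herring: in the paper it is used only for the local smoothing estimate (Proposition~\ref{LSE}) and it does nothing toward reducing $\mathcal{D}^{\alpha/2}(\phi v^n)_x$ to $\mathcal{D}^{\alpha/2}\varphi_x$. What you actually need is $P(\psi_n)\to\psi_n$ in $H^{1+\alpha/2}$ so that $\mathcal{D}^{\alpha/2}(\phi\,\cdot\,)_x$ converges in $L^2$; the bound \eqref{fracP} you cite is only $H^{\alpha/2}$-stability, one full derivative short, and Lemma~\ref{Lemma:projection} provides no approximation estimate for $P$ whatsoever (only stability). One would need the $H^2$-stability in \eqref{Proj} together with density of $S_{\Delta x}$, or else switch to $\mathcal{P}$ and invoke \eqref{projinq}, which is precisely what the paper does. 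A secondary point: your ``subtlety'' about $u^{n+1/2}$ versus $u_{\Delta x}(\cdot,t_{n+1/2})$ is mis-diagnosed, since these coincide exactly by the definition \eqref{appsol}; the genuine discrepancy (handled in the paper by $\mathcal{E}_3^{\Delta x}$ and $\mathcal{E}_5^{\Delta x}$) is between the time integral of $(u_{\Delta x})^2$ over a slab and $\Delta t\,(u^{n\pm 1/2})^2$, which differ because squaring the piecewise-linear interpolant is not piecewise linear.
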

 \begin{proof}
    Let us consider $T=(N+1/2)\Delta t $ for some $N\in \mathbb{N}$. Then $u_{\Delta x}(x,t)$ can be represented by
     \begin{equation*}
         u_{\Delta x}(x,t) = (1-\alpha_n(t))u^{n-1/2}(x) + \alpha_n(t)u^{n+1/2}(x), \quad t\in [t_{n-1/2},t_{n+1/2}),
     \end{equation*}
     where $\alpha_n\in [0,1)$ is given by $\alpha_n = (t - t_{n-1/2})/\Delta t$.
     
   For $t\in [t_{n-1/2}, t_{n+1/2}), n= 1,2...,N,$ using \eqref{Stab}, we have
     \begin{equation*}
         \norm{u_{\Delta x}}_{L^2(\mathbb{R})} \leq \norm{u^{n-1/2}}_{L^2(\mathbb{R})} + \norm{u^{n+1/2}}_{L^2(\mathbb{R})}\leq C\left(\norm{u_0}_{L^2(\mathbb{R})}\right).
     \end{equation*}
Again, for $t\in[0,t_{1/2})$, we have
     \begin{equation*}
         \norm{u_{\Delta x}}_{L^2(\mathbb{R})} \leq |1-(2t)/\Delta t | \norm{u^0}_{L^2(\mathbb{R})} + |2t/\Delta t| \|u^
         {1/2}\|_{L^2(\mathbb{R})}\leq C\left(\norm{u_0}_{L^2(\mathbb{R})}\right).
     \end{equation*}
     This proves \eqref{b1}.
     Next we have
     \begin{align*}
         \int_0^T & \norm{\mathcal{D}^{\alpha/2}u_{\Delta x}}^2_{L^2([-R,R])}\,dt \\
           &  \leq 2\norm{\mathcal{D}^{\alpha/2}u^0}^2_{L^2([-R,R])}\int_0^{t_{1/2}} \left(1-\frac{2t}{\Delta t}\right)^2\,dt
               +2\norm{\mathcal{D}^{\alpha/2}u^0}^2_{L^2([-R,R])}\int_0^{t_{1/2}} \left(\frac{2t}{\Delta t}\right)^2\,dt\\
               &\quad +2\sum_{n=1}^N\norm{\mathcal{D}^{\alpha/2}u^{n-1/2}}^2_{L^2([-R,R])}\int_{t_{n-1/2}}^{t_{n+1/2}} \left(1-\alpha_n(t)\right)^2\,dt\\
               &\quad +2\sum_{n=1}^N\norm{\mathcal{D}^{\alpha/2}u^{n+1/2}}^2_{L^2([-R,R])}\int_{t_{n-1/2}}^{t_{n+1/2}} \left(\alpha_n(t)\right)^2\,dt\\
               & \leq  C \Delta t  \norm{u^0}^2_{H^1([-R,R])} + 2\Delta t \sum_{n=1}^N\norm{\mathcal{D}^{\alpha/2}u^{n+1/2}}^2_{L^2([-R,R])}\\
             &  = C \Delta t  \norm{u^0}^2_{L^2([-R,R])} + C\Delta t \|u^0_x\|^2_{L^2([-R,R])}+ 2\Delta t \sum_{n=1}^N\norm{\mathcal{D}^{\alpha/2}u^{n+1/2}}^2_{L^2([-R,R])},
     \end{align*}
     where we have taken into account the Sobolev inequality $\norm{u^0}_{H^{\alpha/2}([-R,R])}\leq C \norm{u^0}_{H^{1}([-R,R])}$.
     By the help of \eqref{Dalp} and the inverse inequality \eqref{inv_inequality} along with the assumption $\Delta t\leq C\Delta x^2$, provide the estimate \eqref{b2}.
   
   We observe that
     \begin{align*}
        \partial_t u_{\Delta x} &= 
         \begin{cases}
            D_t^+u^{n-1/2} = \frac{D_t^+u^{n}+D_t^+u^{n-1}}{2}, \qquad &(x,t)\in \mathbb{R} \times [t_{n-1/2},t_{n+1/2}),\\
             \frac{u^{1/2}-u^0}{\Delta t/2} = D^t_+u^0, \qquad &(x,t)\in \mathbb{R} \times [0,t_{1/2}).
         \end{cases}
          \end{align*}
 Thus, using the bounds \eqref{TempDB}, \eqref{inv_inequality} and \eqref{Stab}, we get
 \begin{align*}
    \int_0^T \norm{\partial_t(u_{\Delta x} \phi)}^2_{H^{-2}(\mathbb{R})} & \leq 
    \int_0^{t_{1/2}} \norm{D^t_+u^0\phi}^2_{H^{-2}(\mathbb{R})}\,dt \\ & \qquad + \frac{1}{2}
     \sum_{n=1}^{N}\int_{t_{n-1/2}}^{t_{n+1/2}} \norm{D_t^+u^{n}\phi}^2_{H^{-2}(\mathbb{R})}+\norm{D_t^+u^{n-1}\phi}^2_{H^{-2}(\mathbb{R})}\,dt\nonumber\\
      &\leq 
    C\int_0^{t_{1/2}} \norm{u^{1/2}_x}^2_{L^2([-R,R])}\,dt \\ & \qquad + \frac{C}{2}
     \sum_{n=1}^{N}\int_{t_{n-1/2}}^{t_{n+1/2}} \norm{u^{n+1/2}_x}^2_{L^2([-R,R])}+\norm{u^{n-1/2}_x}^2_{L^2([-R,R])}\,dt\nonumber\\
     &\leq 
    C \frac{\Delta t}{2\Delta x^2}\norm{u^{1/2}}^2_{L^2([-R,R])} \\ & \qquad + C\frac{\Delta t}{2\Delta x^2}
     \sum_{n=1}^{N}\norm{u^{n+1/2}}^2_{L^2([-R,R])}+\norm{u^{n-1/2}}^2_{L^2([-R,R])}\nonumber\\
   &\leq C \left(\norm{u_0}_{L^2(\mathbb{R})},R\right).
 \end{align*}
 Considering the properties of $\phi$ and using the estimates \eqref{b1} and \eqref{b2},  we obtain
 \begin{align}
     \label{c1}\|\phi u_{\Delta x}\|_{L^\infty(0,T;L^2([-R,R]))}&\leq C\left(\norm{u_0}_{L^2(\mathbb{R})},R\right),\\
     \label{c2}\|\phi u_{\Delta x}\|_{L^2(0,T;H^{\alpha/2}([-R,R]))}&\leq C\left(\norm{u_0}_{L^2(\mathbb{R})},R\right).
 \end{align}
 Based on the bounds \eqref{c1}, \eqref{c2} and \eqref{b3}, we apply the Aubin-Simon compactness lemma (cf. \cite[Lemma 4.4]{HOLDEN1999})  to the set $\{\phi u_{\Delta x}\}$. Then there exists a sequence $\{\Delta x_j\}_{j\in\mathbb{N}}$ such that $\Delta x_j \xrightarrow[]{} 0$ as $j$ tends to infinity, and a function $\Tilde{u}$ such that
 \begin{equation}\label{congg}
         \phi u_{\Delta x_j}\longrightarrow \Tilde{u} \hspace{0.1cm} \text{strongly in }  L^2(0,T;L^2([-R,R])),
     \end{equation}
     and subsequently, \eqref{congg1} holds. The strong convergence ensures the limit can be passed through the nonlinear term.
     
     Our next aim is to show that $u$ is, in fact, a weak solution to \eqref{fkdv}. The standard $L^2$-projection of a function $\psi \in C^{k+1}(\R)$ onto the finite element space $S_{\Delta x}$ for some $k\in \mathbb{N}_0$, denoted by $\mathcal{P}$, satisfies
     \begin{equation*}
         \int_{\mathbb{R}} (\mathcal{P}\psi(x)-\psi(x))v(x)\,dx = 0 ,\quad \forall v\in S_{\Delta x}.
     \end{equation*}
    Furthermore, the projection operator $\mathcal{P}$ satisfies (see Ciarlet \cite{Ciarlet})
     \begin{equation}\label{projinq}
         \|\psi(x)-\mathcal{P}\psi(x)\|_{H^k(\mathbb{R})}\leq C\Delta x\|\psi\|_{H^{k+1}(\mathbb{R})},
     \end{equation}
     where the constant $C$ is independent of $\Delta x$. 
    
     For $v\in S_{\Delta x}$ and $n\geq 1$, taking into account \eqref{recn}, we have
     \begin{align*}
      \left\langle D_t^+ u^n , \phi v \right\rangle  -\left\langle \frac{(u^{n+1/2})^2}{2},(\phi v)_x\right\rangle + \left\langle \mathcal{D}^{\alpha/2} u^{n+1/2} , \mathcal{D}^{\alpha/2}(\phi v)_x\right\rangle &= 0,\\
      \left\langle D_t^+ u^{n-1} , \phi v \right\rangle  -\left\langle \frac{(u^{n-1/2})^2}{2},(\phi v)_x\right\rangle + \left\langle \mathcal{D}^{\alpha/2} u^{n-1/2} , \mathcal{D}^{\alpha/2}(\phi v)_x\right\rangle &= 0.
    \end{align*}
    By taking average of the above two relations gives
    \begin{align}
        \mathcal{G}_n(\phi v):=  \left\langle D_t^+ u^{n-1/2} , \phi v \right\rangle  &-\left\langle \frac{(u^{n+1/2})^2+(u^{n-1/2})^2}{2},(\phi v)_x\right\rangle \nonumber\\ 
        & \qquad+ \left\langle \mathcal{D}^{\alpha/2} \frac{u^{n+1/2}+ u^{n-1/2}}{2}, \mathcal{D}^{\alpha/2}(\phi v)_x\right\rangle = 0.
    \end{align}
   Our aim is to demonstrate that
    \begin{equation}\label{Orede1}
        \int_0^T\int_\mathbb{R} \left(\partial_t u_{\Delta x}\phi v - \frac{(u_{\Delta x})^2}{2}(\phi v)_x -(\mathcal{D}^{\alpha/2} u_{\Delta x})\mathcal{D}^{\alpha/2}(\phi v)_x\right)\,dx\,dt = \mathcal{O}(\Delta x),
    \end{equation}
    for any test function $v\in C_c^\infty((-R+1,R-1)\times [0,T))$, where $\phi$ is specified in Section \ref{sec2}. We proceed as follows.
    \begin{align*}
        \int_0^T&\int_\mathbb{R} \left(\partial_t u_{\Delta x}\phi v - \frac{(u_{\Delta x})^2}{2}(\phi v)_x 
        -(\mathcal{D}^{\alpha/2} u_{\Delta x})\mathcal{D}^{\alpha/2}(\phi v)_x\right)\,dx\,dt\\
         = &\int_0^{t_{1/2}}\int_\mathbb{R} \left(\partial_tu_{\Delta x}\phi v - \frac{(u_{\Delta x})^2}{2}(\phi v)_x -(u_{\Delta x})\mathcal{D}^{\alpha}(\phi v)_x\right)\,dx\,dt\\
         &+ \sum_{n=1}^N \int_{t_{n-1/2}}^{t_{n+1/2}}\int_\mathbb{R} \left(\partial_tu_{\Delta x}\phi v - \frac{(u_{\Delta x})^2}{2}(\phi v)_x -(u_{\Delta x})\mathcal{D}^{\alpha}(\phi v)_x\right)\,dx\,dt  \\
         =:& \mathcal{I}_{\Delta x} + \mathcal{E}_{\Delta x}.
    \end{align*}
   We choose the test function as  $v^{\Delta x} := \mathcal{P}v$ and observe that  $ \mathcal{I}_{\Delta x} $ and $\mathcal{E}_{\Delta x}$ can be rewritten as
   \begin{align*}
       \mathcal{I}_{\Delta x} =& \int_0^{t_{1/2}}\underbrace{\int_\mathbb{R} \left((D^+_tu^0)\phi v^{\Delta x} - \frac{(u^{1/2})^2}{2}(\phi v^{\Delta x})_x -(\mathcal{D}^{\alpha/2} u^{1/2})\mathcal{D}^{\alpha/2}(\phi v^{\Delta x})_x\right)\,dx}_{=0}\,dt\\
       &+ \underbrace{\int_0^{t_{1/2}}\int_\mathbb{R}(D^+_tu^0)\phi(v- v^{\Delta x}) \,dx\,dt}_{\mathcal{I}_1^{\Delta x}}
       - \underbrace{ \int_0^{t_{1/2}}\int_\mathbb{R} \frac{(u^{1/2})^2}{2}(\phi (v-v^{\Delta x}))_x\,dx\,dt}_{\mathcal{I}_2^{\Delta x}}\\
       &- \underbrace{ \int_0^{t_{1/2}}\int_\mathbb{R} \left[\frac{(u_{\Delta x})^2}{2}-\frac{(u^{1/2})^2}{2}\right](\phi v)_x\,dx\,dt}_{\mathcal{I}_3^{\Delta x}}
        - \underbrace{ \int_0^{t_{1/2}}\int_\mathbb{R} \left(u^{1/2}\right)\mathcal{D}^{\alpha}(\phi (v-v^{\Delta x}))_x\,dx\,dt}_{\mathcal{I}_4^{\Delta x}}\\
        &- \underbrace{\int_0^{t_{1/2}}\int_\mathbb{R} \left(u_{\Delta x}-u^{1/2}\right)\mathcal{D}^{\alpha}(\phi v)_x\,dx\,dt}_{\mathcal{I}_5^{\Delta x}},\\
   \end{align*}
   and
   \begin{align*}
       \mathcal{E}_{\Delta x} = & \sum_{n=1}^N \int_{t_{n-1/2}}^{t_{n+1/2}}\underbrace{\mathcal{G}_n(\phi v^{\Delta x})}_{=0}\,dt + \underbrace{\sum_{n=1}^N \int_{t_{n-1/2}}^{t_{n+1/2}}\int_\mathbb{R} \left(D^+_tu^{n-1/2}\right)\phi(v- v^{\Delta x})\,dx\,dt}_{\mathcal{E}_1^{\Delta x}} \\
       &-\underbrace{\sum_{n=1}^N \int_{t_{n-1/2}}^{t_{n+1/2}}\int_\mathbb{R} \frac{1}{2}\frac{(u^{n+1/2})^2+(u^{n-1/2})^2}{2}(\phi(v- v^{\Delta x}))_x\,dx\,dt}_{\mathcal{E}_2^{\Delta x}} \\
       &- \underbrace{\sum_{n=1}^N \int_{t_{n-1/2}}^{t_{n+1/2}}\int_\mathbb{R} \frac{1}{2}\left[(u_{\Delta x})^2 -\frac{(u^{n+1/2})^2+(u^{n-1/2})^2}{2}\right](\phi v)_x\,dx\,dt}_{\mathcal{E}_3^{\Delta x}} \\
       &- \underbrace{\sum_{n=1}^N \int_{t_{n-1/2}}^{t_{n+1/2}}\int_\mathbb{R} \left(\frac{u^{n+1/2}+u^{n-1/2}}{2}\right)\mathcal{D}^{\alpha}(\phi(v- v^{\Delta x}))_x\,dx\,dt}_{\mathcal{E}_4^{\Delta x}} \\
       &- \underbrace{\sum_{n=1}^N \int_{t_{n-1/2}}^{t_{n+1/2}}\int_\mathbb{R} \left(u_{\Delta x}-\frac{u^{n+1/2}+u^{n-1/2}}{2}\right)\mathcal{D}^{\alpha}(\phi v)_x\,dx\,dt}_{\mathcal{E}_5^{\Delta x}}.
   \end{align*}
   We estimate the terms $\mathcal{I}_i^{\Delta x}$ and $\mathcal{E}_i^{\Delta x},~i=1,2,3,4,5$. From \eqref{siq1}, \eqref{projinq} and \eqref{b3}, we obtain
   \begin{align*}
       \mathcal{I}_1^{\Delta x} + \mathcal{E}_1^{\Delta x} &=  \int_0^T\int_{-R}^R \pl_t u_{\Delta x}\phi (v - v^{\Delta x})\,dx\,dt\\
       &\leq  \int_0^T \|\partial_t u_{\Delta x}\phi \|_{H^{-2}([-R,R])}\|v - v^{\Delta x}\|_{H^{2}([-R+1,R-1])}\,dt\\
       &\leq C\Delta x  \left(\norm{u_0}_{L^2(\mathbb{R})},R\right)\norm{v}_{L^2(0,T;H^{3}([-R+1,R-1]))} \xrightarrow[]{\Delta x\xrightarrow[]{}0}0.
   \end{align*}
   From \eqref{Stab} and \eqref{projinq}, we get
   \begin{align*}
       \mathcal{I}_2^{\Delta x} + \mathcal{E}_2^{\Delta x} \leq&  \frac{1}{2}\int_0^{t_{1/2}}\int_{-R+1}^{R-1} \left|u^{1/2}\right|^2|(\phi (v-v^{\Delta x}))_x|\,dx\,dt\\
       &+ \frac{1}{4}\sum_{n=1}^N \int_{t_{n-1/2}}^{t_{n+1/2}}\int_{-R+1}^{R-1} \left(\left|u^{n+1/2}\right|^2+\left|u^{n-1/2}\right|^2\right)|(\phi(v- v^{\Delta x}))_x|\,dx\,dt\\
       \leq & C(R)\int_0^{t_{1/2}} \|v-v^{\Delta x}\|_{H^2([-R+1,R-1])} \|u^{1/2}\|^2_{L^2([-R,R])}\,dt\\
       &+ C(R)\sum_{n=1}^N \int_{t_{n-1/2}}^{t_{n+1/2}}\|v-v^{\Delta x}\|_{H^2([-R+1,R-1])} \norm{u^{n+1/2}}^2_{L^2([-R,R])}\,dt\\
       &+C(R)\sum_{n=1}^N \int_{t_{n-1/2}}^{t_{n+1/2}}\|v-v^{\Delta x}\|_{H^2([-R+1,R-1])} \norm{u^{n-1/2}}^2_{L^2([-R,R])}\,dt\\
       \leq & C\Delta x  \left(\norm{u_0}_{L^2(\mathbb{R})},R\right)\norm{v}_{L^\infty(0,T;H^3([-R+1,R-1]))} \xrightarrow[]{\Delta x\xrightarrow[]{}0}0.
   \end{align*}
   The next terms rewritten as 
   \begin{align*}
       \mathcal{I}_3^{\Delta x} + \mathcal{E}_3^{\Delta x} = & \frac{\Delta t}{4} \int_0^{t_{1/2}}\int_{-R+1}^{R-1} \left(u^{1/2}+u^0\right)(D^+_tu^0)(\phi v)_x\,dx\,dt \\
       &- \int_0^{t_{1/2}}\int_{-R+1}^{R-1} \left(u^0t(D^+_tu^0)(\phi v)_x +\frac{1}{2}t^2(D^+_tu^0)^2(\phi v)_x \right)\,dx\,dt \\
       &+\frac{\Delta t}{4} \sum_{n=1}^N \int_{t_{n-1/2}}^{t_{n+1/2}}\int_{-R+1}^{R-1} \left(u^{n+1/2}+u^{n-1/2}\right)\left(D_t^+u^{n-1/2}\right)(\phi v)_x\,dx\,dt\\
       &-\sum_{n=1}^N \int_{t_{n-1/2}}^{t_{n+1/2}}\int_{-R+1}^{R-1} u^{n-1/2}(t-t_{n-1/2})\left(D_t^+u^{n-1/2}\right)(\phi v)_x\,dx\,dt\\
       &-\frac{1}{2}\sum_{n=1}^N \int_{t_{n-1/2}}^{t_{n+1/2}}\int_{-R+1}^{R-1} (t-t_{n-1/2})^2\left(D_t^+u^{n-1/2}\right)^2(\phi v)_x\,dx\,dt
   \end{align*}
   Using \eqref{siq1}, \eqref{TempDB},\eqref{Stab} and Lemma 4.1 in \cite{CNGALTUNG} we have the estimates
   \begin{align*}
       \int_0^{t_{1/2}}&\int_{-R+1}^{R-1} u^0(D^+_tu^0)(\phi v)_x \,dx\,dt\\
       &+ \sum_{n=1}^N \int_{t_{n-1/2}}^{t_{n+1/2}}\int_{-R+1}^{R-1} u^{n-1/2}\left(D_t^+u^{n-1/2}\right)(\phi v)_x\,dx\,dt\\
       \leq & \int_0^{t_{1/2}} \norm{u^0}_{L^\infty([-R+1,R-1])}\|D^+_tu^0\phi\|_{H^{-2}([-R+1,R-1])}\norm{(\phi v)_x}_{H^{2}([-R+1,R-1])}\,dt\\
       &+ \sum_{n=1}^N \int_{t_{n-1/2}}^{t_{n+1/2}} \norm{u^{n-1/2}}_{L^\infty([-R,R])}\|D^+_tu^{n-1/2}\phi\|_{H^{-2}([-R+1,R-1])}\norm{(\phi v)_x}_{H^{2}([-R+1,R-1])}\,dt\\
       \leq & C \left(\norm{u_0}_{L^2(\mathbb{R})},R\right)\int_0^{t_{1/2}}\frac{C}{\sqrt{\Delta x}} \norm{u^0}_{L^2(\mathbb{R})}\norm{\phi v}_{H^{3}([-R+1,R-1])}\,dt\\
       &+C \left(\norm{u_0}_{L^2(\mathbb{R})},R\right)\sum_{n=1}^N \int_{t_{n-1/2}}^{t_{n+1/2}}\frac{C}{\sqrt{\Delta x}} \norm{u^{n-1/2}}_{L^2(\mathbb{R})}\norm{\phi v}_{H^{3}([-R+1,R-1])}\,dt\\
       \leq & \frac{1}{\sqrt{\Delta x}}C \left(\norm{u_0}_{L^2(\mathbb{R})},R\right)\norm{v}_{L^\infty(0,T;H^3([-R+1,R-1]))}.
   \end{align*}
   Similarly we can estimate
   \begin{align*}
       \int_0^{t_{1/2}}&\int_{-R+1}^{R-1} u^{1/2}(D^+_tu^0)(\phi v)_x\,dx\,dt\\
       &+ \sum_{n=1}^N \int_{t_{n-1/2}}^{t_{n+1/2}}\int_{-R+1}^{R-1} u^{n+1/2}\left(D_t^+u^{n-1/2}\right)(\phi v)_x\,dx\,dt\\
       \leq & \frac{1}{\sqrt{\Delta x}}C \left(\norm{u_0}_{L^2(\mathbb{R})},R\right)\norm{v}_{L^\infty(0,T;H^3([-R+1,R-1]))}.
   \end{align*}
   Furthermore we have 
   \begin{align*}
       \Big | \int_0^{t_{1/2}}& \int_{-R+1}^{R-1} u^0t(D^+_t u^0)(\phi v)_x\,dx\,dt \Big | \\&
       \qquad+ \left| \int_{t_{n-1/2}}^{t_{n+1/2}}\int_{-R+1}^{R-1} u^{n-1/2}(t-t_{n-1/2})\left(D_t^+u^{n-1/2}\right)(\phi v)_x\,dx\,dt\right|\\
       \leq& \frac{\Delta t}{2}\int_0^{t_{1/2}}\norm{u^0}_{L^\infty([-R+1,R-1])}\int_{-R+1}^{R-1} |D^+_tu^0| |(\phi v)_x|\,dx\,dt\\
       & \qquad + \Delta t\int_{t_{n-1/2}}^{t_{n+1/2}}\norm{u^{n-1/2}}_{L^\infty([-R+1,R-1])}\int_{-R+1}^{R-1} \left|D_t^+u^{n-1/2}\right|
      |(\phi v)_x|\,dx\,dt
   \end{align*}
   and
   \begin{align*}
       \Big |\int_0^{t_{1/2}}&\int_{-R+1}^{R-1} t^2(D^+_tu^0)^2(\phi v)_x\,dx\,dt \Big| \\
       +& \left|\int_{t_{n-1/2}}^{t_{n+1/2}}\int_{-R+1}^{R-1} (t-t_{n-1/2})^2\left(D_t^+u^{n-1/2}\right)^2(\phi v)_x\,dx\,dt\right|\\
       \leq& \frac{\Delta t}{2}\int_0^{t_{1/2}}\|u^{1/2}-u^0\|_{L^\infty([-R+1,R-1])}\int_{-R+1}^{R-1} |D^+_tu^0||(\phi v)_x|\,dx\,dt\\
       +& \Delta t\int_{t_{n-1/2}}^{t_{n+1/2}}\|u^{n+1/2}+u^{n-1/2}\|_{L^\infty([-R+1,R-1])}\int_{-R+1}^{R-1} \left|D_t^+u^{n-1/2}\right||(\phi v)_x|,dx\,dt.
   \end{align*}
   Now we can estimate these terms like the preceding terms, and as $\Delta t \leq C\Delta x^2$ we obtain
   \begin{equation*}
       \mathcal{I}_3^{\Delta x} + \mathcal{E}_3^{\Delta x} \xrightarrow[]{\Delta x\xrightarrow[]{}0}0.
   \end{equation*}
   Using \eqref{Dalp}, \eqref{Stab}, \eqref{b2} and \eqref{projinq} we obtain
   \begin{align*}
       \mathcal{I}_4^{\Delta x} + \mathcal{E}_4^{\Delta x} \leq  & \int_0^{t_{1/2}} \norm{u^{1/2}}_{L^2(\mathbb{R})}\norm{\phi(v- v^{\Delta x})}_{H^{1+\alpha}([-R+1,R-1])}\,dt\\
       +& \frac{1}{2}\sum_{n=1}^N \int_{t_{n-1/2}}^{t_{n+1/2}}\norm{u^{n+1/2}}_{L^2(\mathbb{R})}\|\phi(v- v^{\Delta x})\|_{H^{1+\alpha}([-R+1,R-1])}\,dt\\
       +& \frac{1}{2}\sum_{n=1}^N \int_{t_{n-1/2}}^{t_{n+1/2}}\norm{u^{n-1/2}}_{L^2(\mathbb{R})}\norm{\phi(v- v^{\Delta x})}_{H^{1+\alpha}([-R+1,R-1])}\,dt\\
       \leq & C \left(\norm{u_0}_{L^2(\mathbb{R})},R\right)\int_0^T \|v - v^{\Delta x}\|_{H^{3}([-R+1,R-1])}\,dt\\
       =& \Delta x C \left(\norm{u_0}_{L^2(\mathbb{R})},R\right)\norm{v}_{L^\infty(0,T;H^{4}([-R+1,R-1])} \xrightarrow[]{\Delta x\xrightarrow[]{}0}0.
   \end{align*}
   Finally
   \begin{align*}
       \mathcal{I}_5^{\Delta x} + \mathcal{E}_5^{\Delta x} = & \int_0^{t_{1/2}}\int_\mathbb{R}\Delta t\left(-\frac{1}{2}+\frac{t}{\Delta t}\right)(D^+_tu^0)(\mathcal{D}^{\alpha}(\phi v)_x)\,dx\,dt\\
       +& \sum_{n=1}^N \int_{t_{n-1/2}}^{t_{n+1/2}}\int_\mathbb{R}\Delta t\left(-\frac{1}{2}+\frac{t-t_{n-1/2}}{\Delta t}\right)(D^+_tu^{n-1/2})(\mathcal{D}^{\alpha}(\phi v)_x)\,dx\,dt\\
       \leq&\frac{\Delta t}{2} \int_0^{T}\int_\mathbb{R}|\partial_tu_{\Delta x}\phi||\mathcal{D}^\alpha(\phi v)_x|\,dx\,dt\\
       \leq&\frac{\Delta t}{2} \int_0^{T}\norm{\partial_tu_{\Delta x}\phi}_{H^{-2}(\R)}\|\mathcal{D}^\alpha(\phi v)_x\|_{H^{2}(\R)}\,dt\\
        \leq& \Delta t C \left(\norm{u_0}_{L^2(\mathbb{R})},R\right)\norm{\phi v}_{L^\infty(0,T;H^{3+\alpha}([-R+1,R-1]))} \xrightarrow[]{\Delta t\xrightarrow[]{}0}0.
   \end{align*}
 Combining the above estimates together to conclude that \eqref{Orede1} holds. Also, observe that by passing $\Delta x \xrightarrow{} 0$ we obtain
   \begin{equation}\label{Orede0}
         \int_0^T\int_\mathbb{R} \left(\partial_tu_{\Delta x}\phi v - \frac{(u_{\Delta x})^2}{2}(\phi v)_x -(\mathcal{D}^{\alpha/2} u_{\Delta x})\mathcal{D}^{\alpha/2}(\phi v)_x\right)\,dx\,dt =  0,
    \end{equation}
     for any test function $v\in C_c^\infty([-R+1,R-1]\times [0,T))$. Now choose $v=\varphi/\phi$ in \eqref{Orede0} with $\varphi\in C_c^\infty([-R+1,R-1]\times [0,T))$ and integrate by parts to conclude that \eqref{weaks} holds, that is
      \begin{equation*}
        \int_0^T \int_{\mathbb{R}}\left(\varphi_t u + \varphi_x\frac{u^2}{2} -(\mathcal{D}^{\alpha/2} \varphi_x)\mathcal{D}^{\alpha/2}u\right)\,dx\,dt + \int_{\mathbb{R}}\varphi(x,0)u_0(x)\,dx = 0.
    \end{equation*}
    This concludes the proof.
 \end{proof}
 \section{Convergence rate of the scheme}\label{sec5}
 Under the assumption that the initially data is sufficiently smooth, we determine the convergence rates for the devised scheme \eqref{CNscheme} of the associated initial value problem \eqref{fkdv}. More precisely, we prove the following result:
 \begin{theorem}\label{Thm5.1}
Assume that $u_0\in H^{\max{\{r+2,5\}}}$. Let $u$ be the solution of the IVP \eqref{fkdv} associated to the initial data $u_0$ and $u^n$ be the approximated solution obtained from the devised scheme \eqref{CNscheme} at time $t=t^n$. Then there holds
     \begin{equation}
         \norm{u^n - u(t_n)}_{L^2(\R)} = \mathcal{O} (\Delta x^{r-1} + \Delta t^2) , \qquad n = 0,1,\dots,N,
     \end{equation}
 where $r$ is the degree of the polynomial in the finite element space $S_{\Delta x}$.
 \end{theorem}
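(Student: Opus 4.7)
The plan is to split the global error via projection onto $S_{\Delta x}$ and derive an energy-type estimate for the residual. Let $\mathcal{P}$ denote the $L^2$-orthogonal projection onto $S_{\Delta x}$, and set $\theta^n := u^n - \mathcal{P}u(t_n)$, $\eta^n := u(t_n) - \mathcal{P}u(t_n)$, so that $u^n - u(t_n) = \theta^n - \eta^n$. Standard approximation theory yields $\|\eta^n\|_{H^j(\mathbb{R})} \leq C\Delta x^{r+1-j}\|u(t_n)\|_{H^{r+1}(\mathbb{R})}$ for $0 \leq j \leq r+1$. Well-posedness of \eqref{fkdv} with $u_0 \in H^{\max\{r+2,5\}}(\mathbb{R})$ provides $u \in C([0,T]; H^{\max\{r+2,5\}}(\mathbb{R}))$; the $H^{r+2}$ threshold buys one spare spatial derivative to absorb derivatives transferred onto $\eta^{n+1/2}$ in the nonlinear analysis, while the $H^5$ threshold ensures $u_{tt}$ and $u_{ttt}$ are in $L^2$ uniformly in $t$ -- this propagates through $u_t = -uu_x + \mathcal{D}^{\alpha}u_x$, which costs up to $1+\alpha < 3$ spatial derivatives per time derivative, and is what makes the Crank-Nicolson truncation at $t_{n+1/2}$ genuinely of order $\mathcal{O}(\Delta t^2)$ in $L^2$.

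Next, I would subtract \eqref{CNscheme} from \eqref{WeakF} evaluated at $t_{n+1/2}$ (with appropriate Taylor expansion) to obtain an error equation for $\theta^n$ of the schematic form
\begin{equation*}
\langle D_t^+\theta^n, v\rangle + \langle \mathcal{D}^{\alpha/2}\theta^{n+1/2}, \mathcal{D}^{\alpha/2}v_x\rangle - \tfrac{1}{2}\langle((u^{n+1/2})^2 - (u(t_{n+1/2}))^2)_x, v\rangle = \langle \tau^n, v\rangle,
\end{equation*}
valid for every $v \in S_{\Delta x}$, where $\tau^n$ gathers the Crank-Nicolson truncation and the projection consistency residual with $\|\tau^n\|_{L^2} = \mathcal{O}(\Delta x^{r+1} + \Delta t^2)$. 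Choosing $v = 2\theta^{n+1/2}$ and invoking Lemma \ref{lemma1}\ref{itm2.3} eliminates the dispersive contribution, leaving $\|\theta^{n+1}\|_{L^2}^2 - \|\theta^n\|_{L^2}^2$ on the left. The nonlinear discrepancy factorizes as $(u^{n+1/2} + u(t_{n+1/2}))(\theta^{n+1/2} - \eta^{n+1/2})$; the $\theta^{n+1/2}$-piece is controlled by integration by parts together with $L^\infty$-bounds on $u$ (via Sobolev embedding) and the a priori $L^2$-stability of $u^n$ from Lemma \ref{Lemma:H_alpha_bnd}. The assembled inequality
\begin{equation*}
\|\theta^{n+1}\|_{L^2(\mathbb{R})}^2 \leq (1 + C\Delta t)\|\theta^n\|_{L^2(\mathbb{R})}^2 + C\Delta t(\Delta x^{r-1} + \Delta t^2)^2,
\end{equation*}
combined with the discrete Gronwall lemma and $\theta^0 = 0$, furnishes $\|\theta^n\|_{L^2} = \mathcal{O}(\Delta x^{r-1} + \Delta t^2)$, and the $\eta^n$ estimate then closes the argument.

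The principal obstacle is the $\eta^{n+1/2}$-piece of the nonlinear term: a naive bound of $\eta^{n+1/2}_x$ tested against $\theta^{n+1/2}$ yields at best $\mathcal{O}(\Delta x^r)$, so one integrates by parts to move the derivative onto a finite-element factor and then invokes the inverse inequality \eqref{inv_inequality}, which exchanges one derivative for one lost power of $\Delta x$; this is precisely what degrades the nominal projection rate $\Delta x^{r+1}$ down to $\Delta x^{r-1}$ and forces the CFL-type assumption implicit in $\Delta t = \mathcal{O}(\Delta x^2)$. A secondary subtlety concerns the fractional consistency term: one must control the commutator $[\mathcal{D}^{\alpha/2}, \mathcal{P}]$ when comparing $\mathcal{D}^{\alpha/2}u$ with $\mathcal{D}^{\alpha/2}\mathcal{P}u$, for which the remainder bound \eqref{Remd} or an analogous commutator estimate will be used, in the spirit of the smoothing analysis of Section \ref{sec2}.
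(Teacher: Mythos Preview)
Your outline has a structural mismatch with the scheme and misidentifies where the $\Delta x^{r-1}$ loss actually comes from.

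\textbf{The weight $\phi$ is not optional.} The scheme \eqref{CNscheme} tests against $\phi v$, $v\in S_{\Delta x}$, not against $v$. Consequently the error equation must be written in the $\phi$-weighted inner product, and the natural decomposition uses the \emph{weighted} projection $P$ of Lemma~\ref{Lemma:projection} (so that $\langle Pu(t)-u(t),\phi v\rangle=0$ for $v\in S_{\Delta x}$), not the plain $L^2$-projection $\mathcal P$. With $\chi^n:=u^n-Pu(t_n)$ and test function $v=\chi^{n+1/2}$, the dispersive self-interaction is
\[
\big\langle \mathcal D^{\alpha/2}\chi^{n+1/2},\,\mathcal D^{\alpha/2}(\phi\chi^{n+1/2})_x\big\rangle,
\]
which does \emph{not} vanish by Lemma~\ref{lemma1}\ref{itm2.3}. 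Instead, exactly the commutator argument of \eqref{B_2_esti} shows it is bounded below by $\|\sqrt{\phi_x}\,\mathcal D^{\alpha/2}\chi^{n+1/2}\|_{L^2}^2\ge 0$, so it can be dropped on the left. Your schematic unweighted error equation and the appeal to Lemma~\ref{lemma1}\ref{itm2.3} therefore do not apply to this scheme.

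\textbf{The rate loss is dispersive, not nonlinear.} In the weighted setting the projection residual in the dispersive term reads
\[
\big\langle \mathcal D^{\alpha}\rho^{n+1/2},(\phi\chi^{n+1/2})_x\big\rangle
= -\big\langle \mathcal D^{\alpha}\rho_x^{n+1/2},\phi\chi^{n+1/2}\big\rangle,
\]
and is bounded by $\|\rho^{n+1/2}\|_{H^{3}}\|\chi^{n+1/2}\|_{2,\phi}$. The approximation estimate \eqref{POlyap} with $m=3$, $s=r+1$ then gives $\|\rho^{n+1/2}\|_{H^3}\le C\Delta x^{\,r-1}|u|_{H^{r+2}}$, which is precisely the bottleneck producing $\mathcal O(\Delta x^{r-1})$ (and is also why the hypothesis $u_0\in H^{r+2}$ is needed). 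The nonlinear $\rho$-contribution only requires $\|\rho_x^{n+1/2}\|_{L^2}=\mathcal O(\Delta x^{r+1})$ and is subdominant; no inverse inequality is used anywhere in the rate proof. Your attribution of the loss to an inverse-inequality step on the nonlinear term, and the supposed reliance on $\Delta t=\mathcal O(\Delta x^2)$ at that point, is incorrect. Likewise, the relevant commutator is $[\mathcal D^{\alpha/2},\phi]$ (controlled via the remainder bound \eqref{Remd} and entering through the estimates \eqref{B_1 esti}--\eqref{B_2_esti}), not $[\mathcal D^{\alpha/2},\mathcal P]$.
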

 In order to demonstrate the convergence rate in Theorem \ref{Thm5.1}, we require the following estimate \cite[p.98]{NumAppPDE}.
 \begin{lemma}
     Let $P$ be the projection defined by \eqref{projprop} on $S_{\Delta x}\cap C_c(-R,R)$. Assume that $u\in H^{l+1}(\R)$. If $s:=\min\{l,r+1\}$, then
     \begin{equation}\label{POlyap}
     |Pu-u|_{H^m(\mathbb{R}^n)} \leq C \Delta x^{s+1-m}|u|_{H^{s+1}(\mathbb{R}^n)}, \qquad m=0,1,2,\dots,l,
     \end{equation}
 where the constant $C$ is independent of $\Delta x$.
 \end{lemma}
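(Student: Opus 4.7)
The plan is to reduce the claim to the classical Bramble-Hilbert approximation theory on finite element spaces, exploiting the fact that the weight $\phi$ defining $P$ is uniformly equivalent to $1$. Since $1\le\phi\le 2+2R$ by the properties $\ref{item1.1}$--$\ref{item1.5}$ from Section~\ref{sec2}, the inner product $\langle\cdot,\cdot\rangle_\phi$ induces a norm equivalent to the ordinary $L^2(\mathbb R)$-norm with constants depending only on $R$. Consequently the weighted projection $P$ inherits the best-approximation property in $L^2(\mathbb R)$ up to a constant $C(R)$.

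The first step is to construct, on each element $I_j=[x_{j-1},x_j]$, a local polynomial quasi-interpolant $\Pi_j u\in\mathbb P_r(I_j)$ (for instance an averaged Taylor polynomial of order $r$), pull it back to the reference cell $\hat I=[0,1]$ through the affine map $x=x_{j-1}+\Delta x\,\hat x$, apply the Bramble-Hilbert lemma on $\hat I$, and rescale to obtain
\begin{equation*}
|u-\Pi_j u|_{H^m(I_j)} \le C\,\Delta x^{s+1-m}\,|u|_{H^{s+1}(I_j)},\qquad s=\min\{l,r+1\}.
\end{equation*}
Gluing the local pieces into a global $\Pi u\in S_{\Delta x}$ (matching values and derivatives at the interior nodes is possible because $S_{\Delta x}\subset H^{1+\alpha}(\mathbb R)$ already enforces $C^1$ continuity, and higher compatibility needed for larger $m$ is supplied by a Hermite-type construction using $r\ge 2$) and summing squares over $j\in\mathbb Z$ yields the global bound $\|u-\Pi u\|_{H^m(\mathbb R)}\le C\,\Delta x^{s+1-m}|u|_{H^{s+1}(\mathbb R)}$.

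For $m=0$ the conclusion is immediate: by best-approximation and the equivalence of the weighted and unweighted $L^2$-norms,
\begin{equation*}
\|Pu-u\|_{L^2(\mathbb R)}\le C(R)\,\|u-\Pi u\|_{L^2(\mathbb R)}\le C\,\Delta x^{s+1}\,|u|_{H^{s+1}(\mathbb R)}.
\end{equation*}
For $1\le m\le l$, I split $Pu-u=(Pu-\Pi u)+(\Pi u-u)$; since $Pu-\Pi u\in S_{\Delta x}$, the quasi-uniform inverse inequality $|v|_{H^m(\mathbb R)}\le C\,\Delta x^{-m}\|v\|_{L^2(\mathbb R)}$ for $v\in S_{\Delta x}$ combined with the triangle inequality and the already-established $L^2$ bound gives
\begin{equation*}
|Pu-\Pi u|_{H^m(\mathbb R)}\le C\,\Delta x^{-m}\bigl(\|Pu-u\|_{L^2}+\|u-\Pi u\|_{L^2}\bigr)\le C\,\Delta x^{s+1-m}|u|_{H^{s+1}(\mathbb R)},
\end{equation*}
and the local interpolation bound on $|u-\Pi u|_{H^m(\mathbb R)}$ closes the argument through one further triangle inequality.

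The principal technical hurdle is justifying the inverse inequality (equivalently, the $H^m$-stability of $P$) for all $m$ up to $l$, since Lemma~\ref{Lemma:projection} only records stability for $m=0,1,2$ and $m=\alpha/2$, and the global space is merely $H^{1+\alpha}$-smooth rather than $C^\infty$-smooth. This is resolved by noting that the inverse inequality is a purely local, element-wise scaling statement that holds on any quasi-uniform finite element space, and the mesh here is uniform of spacing $\Delta x$; the required global smoothness of the interpolant is arranged through the Hermite-type gluing, which is available whenever $r\ge 2$. Once these ingredients are in place the remaining manipulations are classical finite-element approximation theory as in \cite{Ciarlet} and \cite{NumAppPDE}.
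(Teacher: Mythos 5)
The paper does not actually prove this lemma---it is stated and immediately attributed to \cite[p.98]{NumAppPDE}---so there is no in-paper argument to compare against; what you supply is the standard finite-element derivation (local Bramble--Hilbert estimate for a quasi-interpolant, best approximation of the weighted projection in $L^2$, then an inverse inequality to climb to higher seminorms), and in outline this is the right and essentially only route. Two remarks on the soundness of the details. First, the best-approximation step needs the comparison function to lie in the \emph{range} of $P$, namely $S_{\Delta x}\cap C_c(-R,R)$, so your $\Pi u$ must be arranged to vanish near $\pm R$; since the domain of $P$ is $C_c^\infty(-R,R)$ this is harmless, but it should be said. Second, for $m\geq 3$ neither the global seminorm $|Pu-\Pi u|_{H^m(\R)}$ nor the inverse inequality you invoke is meaningful as stated, because $S_{\Delta x}$ is only $C^1$ across the nodes (piecewise polynomials glued in $H^{1+\alpha}$ with $\alpha<2$), so its elements are not in $H^m(\R)$ for $m\geq 3$. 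Your ``Hermite-type gluing'' secures $C^1$ continuity of $\Pi u$, i.e.\ membership in $S_{\Delta x}$, but it does not and cannot produce the $C^{m-1}$ compatibility needed for the global $H^m$ seminorm to be finite. The estimate \eqref{POlyap} for $m\geq 2$ or $3$ has to be read with broken (element-wise) seminorms, which is also how it is used later in the paper; you should make that interpretation explicit rather than claim the global inverse inequality.

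There is also a genuine quantitative gap between what your argument delivers and what the statement claims. An averaged Taylor polynomial (or any quasi-interpolant) with values in $\mathbb{P}_r$ reproduces only polynomials of degree $r$, so Bramble--Hilbert on the reference cell followed by scaling gives
\begin{equation*}
|u-\Pi_j u|_{H^m(I_j)} \leq C\,\Delta x^{\,s+1-m}\,|u|_{H^{s+1}(I_j)} \quad\text{with } s=\min\{l,r\},
\end{equation*}
not $s=\min\{l,r+1\}$ as you assert. The extra order cannot be recovered by any step of your proof (best approximation and inverse inequalities only preserve the order of the interpolant), and indeed $\norm{Pu-u}_{L^2(\R)}=\mathcal{O}(\Delta x^{\,r+2})$ is impossible in general for degree-$r$ piecewise polynomials. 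So as written your scaling step overclaims by one power of $\Delta x$; the statement you are proving appears to carry the same slip (the standard reference gives the rate $h^{\,k-m}$ with $k=\min\{l+1,r+1\}$), but since your proof is meant to establish \eqref{POlyap} exactly as stated, you should either correct the exponent to $s=\min\{l,r\}$ or identify the additional structure that would justify $r+1$---your current argument supplies neither.
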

 \begin{proof}[Proof of Theorem \ref{Thm5.1}]
 To begin with, we decompose the error as follows:
 \begin{equation*}
     u^n-u(t_n) = (u^n - Pu(t_n)) + (Pu(t_n) - u(t_n)):= \chi^n + \rho^n.
 \end{equation*}
Furthermore, for simplicity, let us denote $w^n := Pu(t_n)$. 
Testing the equation \eqref{fkdv} with $\phi v$, where $v\in H^{1+\alpha}(\R)$, and performing the integration by parts, we get 
 \begin{equation}\label{weaktime}
     \langle \pl_t u(t) , \phi v \rangle - \frac{1}{2}\langle u(t)^2, (\phi v)_x\rangle + \langle \mathcal{D}^{\alpha} u(t), (\phi v)_x\rangle = 0, \qquad t\in(0,T].
 \end{equation}
Using \eqref{CNscheme}, \eqref{weaktime} and \eqref{projprop}, we have  
\begin{align*}
    \left\langle\frac{\chi^{n+1}-\chi^n}{\Delta t},\phi v\right\rangle =& \left\langle\frac{u^{n+1}-u^n}{\Delta t}, \phi v\right\rangle - \left\langle\frac{\omega^{n+1}-\omega^n}{\Delta t}, \phi v\right\rangle \nonumber\\
    =&\left\langle\frac{u^{n+1}-u^n}{\Delta t}, \phi v\right\rangle - \langle \pl_t u(t_{n+1/2}),\phi v\rangle 
    +\left\langle \underbrace{\pl_t u(t_{n+1/2})-\frac{u^{n+1}-u^n}{\Delta t}}_{\xi^{n+1/2}}, \phi v\right\rangle\nonumber\\
    =& \frac{1}{2}\left\langle(u^{n+1/2})^2-u(t_{n+1/2})^2,(\phi v)_x\right\rangle \nonumber\\
    & \qquad -\left\langle \mathcal{D}^{\alpha}(u^{n+1/2} - u(t_{n+1/2})),(\phi v)_x\right\rangle + \left\langle \xi^{n+1/2},\phi v\right\rangle,
\end{align*}
for all $v\in S_{\Delta x}$. In particular, we choose $v=\chi^{n+1/2}$ to obtain
\begin{align}\label{temp:rate_0}
    \frac{1}{2}\norm{\chi^{n+1}}^2_{2,\phi} =& \frac{1}{2}\norm{\chi^{n}}^2_{2,\phi} + \Delta t\Big[\frac{1}{2}\left\langle(u^{n+1/2})^2-u(t_{n+1/2})^2,(\phi \chi^{n+1/2})_x\right\rangle \nonumber\\
    & \qquad -\left\langle \mathcal{D}^{\alpha}(u^{n+1/2} - u(t_{n+1/2})),(\phi \chi^{n+1/2})_x\right\rangle + \left\langle \xi^{n+1/2},\phi \chi^{n+1/2} \right\rangle \Big].
\end{align}
Moreover, we decompose the error at the mid-point $t_{n+1/2}$ as 
\begin{align}\label{temp:rate_1}
    u^{n+1/2} - u(t_{n+1/2}) & = \chi^{n+1/2}+\rho^{n+1/2} + \underbrace{\frac{u(t_{n+1})+u(t_{n})}{2} - u(t_{n+1/2})}_{\sigma^{n+1/2}}\\
    & = \chi^{n+1/2} + w^{n+1/2} - u(t_{n+1/2}),\nonumber
\end{align}
and consequently, we have
\begin{equation}\label{temp:rate_2}
    \begin{split}
        (u^{n+1/2})^2-u(t_{n+1/2})^2 =& (\chi^{n+1/2})^2 + 2\chi^{n+1/2}\omega^{n+1/2} + (\omega^{n+1/2})^2- u(t_{n+1/2})^2\\
        =&  (\chi^{n+1/2})^2 + 2\chi^{n+1/2} w^{n+1/2} + (\omega^{n+1/2}+u(t_{n+1/2}))(\rho^{n+1/2} + \sigma^{n+1/2}). 
    \end{split}
\end{equation}
Since $\chi^{n+1/2}\in S_{\Delta x}$ and using the integration by parts repeatedly, we get the following identities:
\begin{equation}\label{temp:rate_3}
    \begin{split}
        \left\langle(\chi^{n+1/2})^2,(\phi \chi^{n+1/2})_x\right\rangle & = \frac{2}{3}\left\langle(\chi^{n+1/2})^3,\phi_x\right\rangle,\\
        2\left\langle \chi^{n+1/2}\omega^{n+1/2},(\phi \chi^{n+1/2})_x\right\rangle &= - \left\langle(\chi^{n+1/2})^2,\phi\omega_x^{n+1/2}\right\rangle+
        \left\langle(\chi^{n+1/2})^2,\phi_x\omega^{n+1/2}\right\rangle.
    \end{split}
\end{equation}
Using the identities \eqref{temp:rate_3} and taking into account of \eqref{temp:rate_1} and \eqref{temp:rate_2}, the equation \eqref{temp:rate_0} reduces to
\begin{align}\label{temp:rate_4}
        \frac{1}{2}\norm{\chi^{n+1}}^2_{2,\phi}& = \frac{1}{2}\norm{\chi^{n}}^2_{2,\phi} + \Delta t\Big[\frac{1}{3}\left\langle(\chi^{n+1/2})^3,\phi_x\right\rangle- 
    \frac{1}{2}\left\langle(\chi^{n+1/2})^2,\phi\omega_x^{n+1/2}\right\rangle \nonumber\\
&+\left\langle(\chi^{n+1/2})^2,\phi_x\omega^{n+1/2}\right\rangle+
        \frac{1}{2}\left\langle (\omega^{n+1/2}+u(t_{n+1/2}))(\rho^{n+1/2} + \sigma^{n+1/2}),(\phi \chi^{n+1/2})_x\right\rangle\nonumber\\
        & -\left\langle \mathcal{D}^\alpha(\chi^{n+1/2}+\rho^{n+1/2}+\sigma^{n+1/2}),(\phi \chi^{n+1/2})_x\right\rangle + \left\langle \xi^{n+1/2},\phi \chi^{n+1/2} \right\rangle \Big].
\end{align}
Let $f\in H^s(\mathbb{R})$ and $0<a<1$ be such that $s\geq 2a$. Then by the Fourier transform (for more details refer to \cite{Grubb}), we have
\begin{equation*}
    \norm{\mathcal{D}^{2a}f}^2_{L^2(\mathbb{R})} \leq C\norm{\mathcal{D}^{2a}f}^2_{H^{s-2a}(\mathbb{R})} \leq C\norm{f}^2_{H^{s}(\mathbb{R})}.
\end{equation*}
Hence by the estimates \eqref{B_1 esti}, \eqref{B_2_esti} and using the Cauchy-Schwarz inequality, the equation \eqref{temp:rate_4} turns into
\begin{equation*}
    \begin{split}
        \frac{1}{2}\norm{\chi^{n+1}}^2_{2,\phi} + \Delta t&\norm{\sqrt{\phi_x} \mathcal{D}^{\alpha/2}\chi^{n+1/2}}^2_{L^2(\mathbb{R})} \leq\frac{1}{2}\norm{\chi^{n}}^2_{2,\phi} + \frac{\Delta t}{6}\norm{\sqrt{\phi_x} \mathcal{D}^{\alpha/2}\chi^{n+1/2}}^2_{L^2(\mathbb{R})}\\&+\frac{\Delta t}{6}\Bigg[C_S\norm{\chi^{n+1/2}}^2_{L^2(\mathbb{R})}+C\left(1+\norm{u^{n+1/2}-\omega^{n+1/2}}^2_{L^2(\mathbb{R})}\right)\norm{\chi^{n+1/2}}^2_{L^2(\mathbb{R})}\\&+\norm{\chi^{n+1/2}}^2_{2,\phi}\norm{\omega_x^{n+1/2}}_{L^\infty}+\norm{\chi^{n+1/2}}^2_{2,\phi}\norm{\omega^{n+1/2}}_{L^\infty(\R)}\\&
        +\|\omega^{n+1/2}+u(t_{n+1/2})\|_{L^\infty}\left(\norm{\rho_x^{n+1/2}}_{L^2(\R)}+ \norm{\sigma_x^{n+1/2}}_{L^2(\R)}\right)\norm{\chi^{n+1/2}}_{2,\phi}\\& +\|\omega_x^{n+1/2}+u_x(t_{n+1/2})\|_{L^\infty}\left(\norm{\rho^{n+1/2}}_{L^2(\R)}+\norm{\sigma^{n+1/2}}_{L^2(\R)}\right)\norm{\chi^{n+1/2}}_{2,\phi}\\&
        +\norm{\mathcal{D}^{\alpha}\rho_x^{n+1/2}}_{L^2(\mathbb{R})}\norm{\chi^{n+1/2}}_{L^2(\R)} + \norm{\mathcal{D}^{\alpha}\sigma_x^{n+1/2}}_{L^2(\mathbb{R})} \norm{\chi^{n+1/2}}_{2,\phi} \\&+ C_R\norm{\xi^{n+1/2}}_{L^2(\mathbb{R})}\norm{\chi^{n+1/2}}_{2,
        \phi} \Bigg],
    \end{split}
\end{equation*} 
 where $\norm{\sqrt{\phi}}_{L^{\infty}(\R)}\leq C_R$. Furthermore, using the Sobolev inequality $\norm{\omega}_{L^{\infty}(\mathbb{R})} \leq \norm{\omega}_{H^1(\mathbb{R})}$ and triangle inequality we deduce
 \begin{equation*}
    \begin{split}
        \frac{1}{2}\norm{\chi^{n+1}}&^2_{2,\phi} + \frac{5\Delta t}{6}\norm{\sqrt{\phi_x} \mathcal{D}^{\alpha/2}\chi^{n+1/2}}^2_{L^2(\mathbb{R})} \leq\frac{1}{2}\norm{\chi^{n}}^2_{2,\phi} \\
        +&C\frac{\Delta t}{6}\Bigg[ \norm{\chi^{n+1/2}}^2_{L^2(\mathbb{R})}+\left(1+\norm{u^{n+1/2}}^2_{L^2(\mathbb{R})}+\norm{\omega^{n+1/2}}^2_{L^2(\mathbb{R})}\right)\norm{\chi^{n+1/2}}^2_{L^2(\mathbb{R})}\\+&    \norm{\chi^{n+1/2}}^2_{2,\phi}\norm{\omega^{n+1/2}}_{H^2}+\norm{\chi^{n+1/2}}^2_{2,\phi}\norm{\omega^{n+1/2}}_{H^1(\R)}\\+& \left(\norm{\omega^{n+1/2}}_{H^1(\R)}+\norm{u(t_{n+1/2})}_{H^1(\R)}\right)\left(\norm{\rho_x^{n+1/2}}_{L^2(\R)}+ \norm{\sigma_x^{n+1/2}}_{L^2(\R)}\right)\norm{\chi^{n+1/2}}_{2,\phi}\\ +&\left(\norm{\omega_x^{n+1/2}}_{H^1(\R)}+\norm{u_x(t_{n+1/2})}_{H^1(\R)}\right)\left(\norm{\rho^{n+1/2}}_{L^2(\R)}+ \norm{\sigma^{n+1/2}}_{L^2(\R)}\right)\norm{\chi^{n+1/2}}_{2,\phi}\\+&\norm{\rho^{n+1/2}}_{H^{3}(\mathbb{R})}\norm{\chi^{n+1/2}}_{2,\phi}+\norm{\sigma^{n+1/2}}_{H^{3}(\mathbb{R})} \norm{\chi^{n+1/2}}_{2,\phi} \\+& \norm{\xi^{n+1/2}}_{L^2(\mathbb{R})}\norm{\chi^{n+1/2}}_{2,
        \phi} \Bigg].
    \end{split}
\end{equation*} 
Using stability estimate \eqref{Stab} from the Lemma \ref{Lemma:H_alpha_bnd} along with the Cauchy's inequality, properties of $\phi$ and also dropping the second term on the left hand side, we obtain
\begin{align*}
    \norm{\chi^{n+1}}^2_{2,\phi} \leq& \norm{\chi^{n}}^2_{2,\phi} +\Delta t C(u_0,R)  \Big[ \norm{\chi^{n+1}}^2_{2,\phi} + \norm{\chi^{n}}^2_{2,\phi} + \norm{\rho^{n+1/2}}^2_{L^2(\R)} + |\rho^{n+1/2}|^2_{H^1(\R)} \\
    & \qquad+ |\rho^{n+1/2}|_{H^{2}(\R)}^2  + |\rho^{n+1/2}|_{H^{3}(\R)}^2 + \norm{\sigma^{n+1/2}}_{H^{3}(\R)}^2 + \norm{\xi^{n+1/2}}^2_{L^2(\R)} \Big],
    \end{align*}
which further implies 
 \begin{equation}\label{temp:rate_5}
(1-\Delta t C(u_0,R))\norm{\chi^{n+1}}_{2,\phi}^2 \leq (1+\Delta t C(u,R))   \norm{\chi^{n}}_{2,\phi}^2  + \Delta t C(u,R)S_n,
\end{equation}
where $S_n$ is represented by
\begin{equation*}
\begin{split}
    S_n := \norm{\rho^{n+1/2}}^2_{L^2(\R)} &+ |\rho^{n+1/2}|^2_{H^1(\R)}+ |\rho^{n+1/2}|^2_{H^2(\R)}\\& +|\rho^{n+1/2}|_{H^{3}(\R)}^2 + \norm{\sigma^{n+1/2}}_{H^{3}(\R)}^2 + \norm{\xi^{n+1/2}}^2_{L^2(\R)}.
    \end{split}
\end{equation*}
We choose $\Delta t$ in such a way that $(1-\Delta t C(u,R))\geq 1/2$. Afterwards, using the Taylor's formula with integral remainder, we have the following estimates
\begin{align}
    |\sigma^{n+1/2}|^2_{H^k(\R)} & \leq C\Delta t^3 \int_{t_n}^{t_{n+1}} | u_{tt}(s)|_{H^k(\R)}^2 \,ds,
   \label{estsemi}\\
    \norm{\xi^{n+1/2}}^2_{L^2(\R)} & \leq C\Delta t^3 \int_{t_n}^{t_{n+1}} \norm{u_{ttt}(s)}_{L^2(\R)}^2 \,ds.\label{estL2}
\end{align}
Using the estimates \eqref{POlyap}, \eqref{estsemi} and \eqref{estL2}, we obtain
\begin{equation*}
\begin{split}   
    S_n & \leq 2C\Delta x^{2(r+2)} \left(|u(t_n)|^2_{H^{r+2}(\R)} + |u(t_{n+1})|^2_{H^{r+2}(\R)}\right)\\& \quad + 2C\Delta x^{2(r+2)} \left(|u(t_n)|^2_{H^{r+2}(\R)} +|u(t_{n+1})|^2_{H^{r+2}(\R)}\right) \\&
    \quad + 2C\Delta x^{2r} \left(|u(t_n)|^2_{H^{r+2}(\R)} + |u(t_{n+1})|^2_{H^{r+2}(\R)}\right) \\&\quad+ 2C\Delta x^{2(r-1)}\left(|u(t_n)|^2_{H^{r+2}(\R)} +|u(t_{n+1})|^2_{H^{r+2}(\R)}\right) \\&
    \quad+  C\Delta t^3 \int_{t_n}^{t_{n+1}} \norm{u_{tt}(s)}_{H^{3}(\R)}^2 \,ds
    +C\Delta t^3 \int_{t_n}^{t_{n+1}} \norm{u_{ttt}(s)}_{L^2(\R)}^2 \,ds,\\ 
    &\leq C\Delta x^{2(r-1)} \sup_{0\leq t\leq T} |u(t)|^2_{H^{r+2}(\R)}\\ &\quad+ C\Delta t^3 \left(\int_{t_n}^{t_{n+1}} \norm{u_{tt}(s)}_{H^{3}(\R)}^2 \,ds+\int_{t_n}^{t_{n+1}} \norm{u_{ttt}(s)}_{L^2(\R)}^2\,ds\right).
\end{split} 
\end{equation*}
As a consequence, \eqref{temp:rate_5} becomes
\begin{equation*}
        \begin{split}
            \norm{\chi^n}^2_{2,\phi} &\leq \left(\frac{1+C\Delta t}{1-C\Delta t}\right)^n \|\chi^0\|_{2,\phi}^2 + \Delta t C \sum_{j=0}^{n-1}\left(\frac{1+C\Delta t}{1-C\Delta t}\right)^{n-j}S_j\\
            & \leq e^{4CT}\|\chi^0\|_{2,\phi}^2 + \Delta t e^{4CT}\sum_{j=0}^{n-1}S_j\\
            &\leq TC(u,R,T)\Delta x^{2(r-1)} + C(T) \Delta t^4 \left(\int_{0}^{T} \norm{u_{tt}(s)}_{H^{3}(\R)}^2 \,ds+\int_{0}^{T} \norm{u_{ttt}(s)}_{L^2(\R)}^2 \,ds\right)\\
            & = C(u,R,T) (\Delta x^{2(r-1)} + \Delta t^4).
        \end{split}
\end{equation*}
In order to ensure the bounded-ness of the above norms, we have taken into account $u_0\in H^s(\mathbb{R})$, where $s = \max\{r+2,5\}$.
We observe that using the estimate \eqref{POlyap}, for $n=1,2,\dots,N$
\begin{align*}
    \|u^n - u(t_n)\|_{L^2(\R)} & \leq \norm{\chi^n}_{L^2(\R)} + \|\rho^n\|_{L^2(\R)}\\
    &\leq \norm{\chi^n}_{2,\phi}+ \|\rho^n\|_{L^2(\R)}
    \leq C(u,R,T)(\Delta x^{r-1}+\Delta t^2).
\end{align*}
Hence the result follows.
\end{proof}
 \section{Numerical experiments}\label{sec6}
In our analysis, we provide a series of numerical illustrations of the fully discrete scheme \eqref{CNscheme} associated to \eqref{fkdv}. The conventional approaches typically involve applying a numerical scheme to the periodic version of the problem, considering a sufficiently large domain where the reference solutions tend to zero outside of it, for instance, kindly refer to \cite{ RajibBO, MonteCarlo, Vmurty}.
However, in particular, our study in this paper focuses on the convergence of the approximated solution on the real line. To address this, we consider a discretized domain that is large enough for the reference solutions (exact or higher-grid solutions) to be nearly zero outside of it. By incorporating a periodic boundary condition that enforces the approximation to have the same value at both ends of the domain, we are able to effectively handle our system of equations.

To summarize, we present a numerical demonstration according to the following procedure. Inspired by \cite{Galtungc,CNKDV}, we define the finite element space $S_{\Delta x}$. For this, let us consider the functions $f$ and $g$ as
 \begin{align*}
     f(y) =
     \begin{cases}
        1+y^2(2|y|-3), &|y|\leq 1,\\
        0,  &|y|>1,
     \end{cases} 
     \qquad g(y) = 
     \begin{cases}
        y(1-|y|)^2, & |y|\leq 1,\\
        0, &|y|>1.
     \end{cases}
 \end{align*}
 For $j\in \mathbb{Z}$, we consider $x_j=j\Delta x$ and define the basis functions as
 \begin{align*}
     v_{2j}(x) = f\left(\frac{x-x_j}{\Delta x}\right),\qquad v_{2j+1}(x) = g\left(\frac{x-x_j}{\Delta x}\right).
 \end{align*}
Then we consider $S_{\Delta x} = \text{span}(\{v_{2j},v_{2j+1}\}: j=-M,-M+1,\dots,M)$ which is a $4M+2$ dimensional subspace of $H^{1+\alpha}(\mathbb{R})$ for $\alpha\in[1,2)$. 
 
Let us denote $N:=2M$ as the number of elements considered for our numerical experiments. As mentioned in \cite{CNKDV} for KdV equation and \cite{Galtungc} for BO equation, setting $\Delta t = \mathcal{O}(\Delta x)$ is sufficient to obtain the numerical results instead of $\Delta t = \mathcal{O}(\Delta x^2)$. To determine the subsequent iteration $u^{n+1}$ in equation \eqref{solv}, we impose a termination criterion asserting that $\norm{w^{\ell+1}-w^\ell}_{L^2(\R)}\leq0.002\Delta x\norm{u^n}_{L^2(\R)}$.

For the computation of the inner product $\langle \mathcal{D}^{\alpha}(v_j), (v_i)\rangle$ for each individual element, we use an eight-point \emph{Gauss-Legendre} (GL) quadrature rule. This rule was applied to the principal value integral defining $\mathcal{D}^{\alpha}(v_j)$ and is evaluated at seven GL points. We have measured the relative $L^2$-error, which is defined by 
\begin{equation*}
     E:= \frac{\|u_{\Delta x}-u\|_{L^2(\R)}}{\|u\|_{L^2(\R)}},
 \end{equation*}
 where the $L^2$-norms are computed using the trapezoidal rule on the points $x_j$. For $t=n\Delta t$, we set
 $$u_{\Delta x}(x,t) = u^n(x,t) = \sum_{j=-2M}^{2M+1}u_j^nv_j(x).$$

Similar to the case of usual KdV equation, the fractional KdV equation \eqref{fkdv} possesses an infinite number of conserved quantities \cite{Galtungc,FKDV_WP_L2,HOLDEN1999}. Hereby we consider the first two specific quantities known as $mass$ and $momentum$ along with another conserved quantity involving the exponent $\alpha$. With normalization, these quantities can be expressed as follows:
\begin{align*}
     C^{\Delta}_1 &:= \frac{\int_{\mathbb{R}} u_{\Delta x}\,dx}{\int_{\mathbb{R}} u^0\,dx},\qquad
     C^{\Delta}_2 := \frac{\|u_{\Delta x}\|_{L^2(\R)}}{\norm{u^0}_{L^2(\R)}},\\
     C^{\Delta}_3:&= \frac{\int_{\R} \left((\mathcal{D}^{\alpha/2}u_{\Delta x})^2 - \frac{(u_{\Delta x})^3}{3}\right)~dx}{\int_{\R} \left((\mathcal{D}^{\alpha/2}u^0)^2 - \frac{(u^0)^3}{3}\right)~dx}.
\end{align*}
Our aim is to preserve these quantities in the discrete set up. 
We remark that within the realm of completely integrable partial differential equations, it has been noted that numerical methodologies capable of preserving a greater number of conserved quantities generally yield more accurate approximations in comparison to those preserving fewer. 
Furthermore, we have determined the convergence rates of the numerical scheme \eqref{CNscheme}, denoted as $R_E$, with the varying numbers of elements $N_1$ and $N_2$, defined by
\begin{equation*}
     \frac{\ln(E(N_1))-\ln(E(N_2))}{\ln(N_2)-\ln(N_1)},
 \end{equation*}
where $E$ is considered as a function of the number of elements $N$.
 \subsection{Benjamin Ono equation} ($\alpha\approx1$):
  In this example we consider the one soliton solution of the Benjamin Ono equation presented in \cite{Vmurty}, namely

\begin{equation}
    u(x,t) = \frac{2c\delta}{1-\sqrt{1-\delta^2}\cos(c\delta(x-ct))}, \qquad \delta = \frac{\pi}{cL}.
\end{equation}
We have applied the proposed scheme \eqref{CNscheme} with the initial data $u_0(x) = u(x,0)$ along with the parameters $c=0.25$ and $L=15$. We set the time step to $\Delta t = \Delta x/\norm{u_0}_{\infty}$ and the approximate solution is computed at $t=120$ which is a period for the exact solution. 
\begin{table} 
\begin{tabularx}{0.8\textwidth} { 
   >{\raggedright\arraybackslash}X 
   >{\raggedright\arraybackslash}X
   >{\centering\arraybackslash}X 
   >{\raggedleft\arraybackslash}X
   >{\raggedleft\arraybackslash}X 
   >{\raggedleft\arraybackslash}X }
 \hline
 N & E & $C^{\Delta}_1$ & $C^{\Delta}_2$ & $C^{\Delta}_3$ & $R_E$ \\
 \hline
 \hline
 \\
 64  & 0.2898  & 1.19 &1.310 & 3.850\\
    &  & & & & 3.590 \\
 128 & 0.0241 & 1.108 & 1.275 & 3.055\\
     &  & & & & 2.462\\
 256  & 0.0044 & 1.000 & 1.016 & 1.034\\
  &  & & & & 2.044 \\
 512 & 0.0011 & 1.000 & 1.001 & 1.001\\ 
    &  & & & & 2.002\\
 1024  & 0.0002  & 1.000 & 1.000 & 1.000\\
 \\
\hline
\end{tabularx}
\caption{Relative error for one soliton solution with $\alpha=1$.}
\label{tab_1}
\end{table}
A visualization for the results for $N=128$ and $512$ are given in the Figure \ref{fig:BO1}. Clearly the plot indicates that the approximated solution converges to the exact solution and this is confirmed by the error analysis in Table \ref{tab_1}. It demonstrates the errors are small even for fairly coarser grids and it is converging to zero at an optimal rate $2$. 
\begin{figure}
\centering
\includegraphics[width=0.7\linewidth, height=7cm]{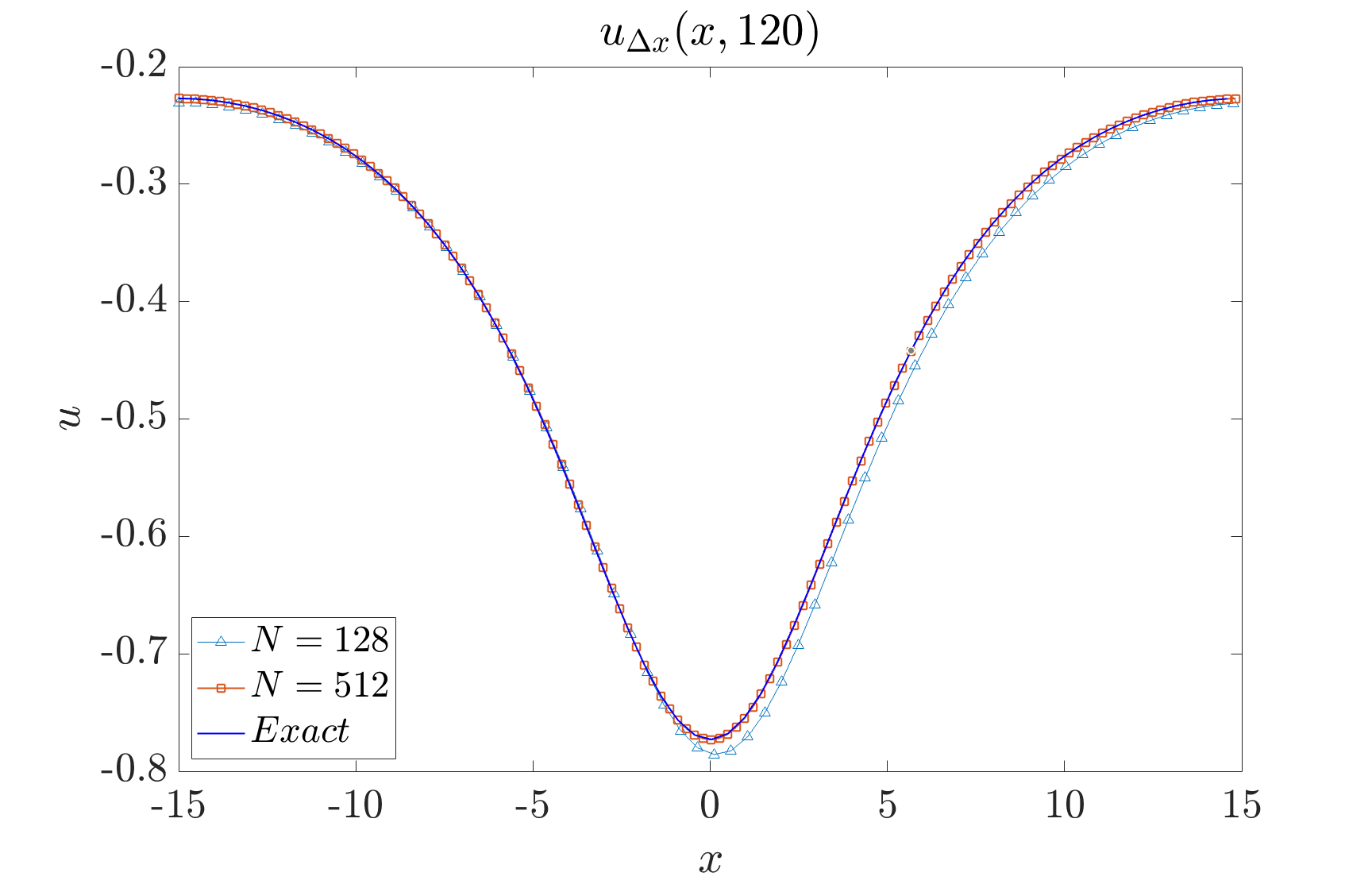}
\caption{The exact and numerical solution at $t=120$ with the initial data $u(x,0)$ using $N=128$ and $N=256$.}
\label{fig:BO1}
\end{figure}


 \subsection{KdV equation}  ($\alpha \approx 2 $):
 We would like to compare our approximated solution $u_{\Delta x}$ obtained from \eqref{CNscheme} in case of $\alpha = 1.999 $ with the exact solution of the KdV equation \(u_t + (u^2/2)_x +u_{xxx} = 0\). We test our scheme \eqref{CNscheme} for one-soliton and two-soliton solution.
 \subsubsection{One soliton }
 The family of exact solution (one soliton) of the KdV equation is given by \cite{dutta2015convergence}
 \begin{equation}\label{onesol}
     u(x,t) = 9\left(1-\tanh^2\left(\sqrt{3/2}(x-3t)\right)\right),
\end{equation}
which represents a single `bump' moving to the right with speed $3$.
We have tested our scheme with the initial data $u_0 = u(x,-1)$. The solution of \eqref{fkdv} is calculated on the uniform grid with $\Delta x = 30/N$ in the interval $[-15,15]$. The Figure \ref{fig:KDVone} depicts the convergence of the approximated solution and the Table \ref{tab_2} provides the expected rate of convergence.
\begin{table}
\begin{tabularx}{0.8\textwidth} { 
   >{\raggedright\arraybackslash}X
    >{\raggedright\arraybackslash}X
   >{\centering\arraybackslash}X 
   >{\raggedleft\arraybackslash}X
   >{\raggedleft\arraybackslash}X 
   >{\raggedleft\arraybackslash}X }
 \hline
 N & E & $C^{\Delta}_1$ & $C^{\Delta}_2$ & $C^{\Delta}_3$ & $R_E$ \\
 \hline
 \hline 
 \\
 32  & 0.0693 &  1.10 & 1.000 & 1.629\\
   & & & &  & 1.717 \\
 64  & 0.0211  & 1.02 &1.000 & 1.353\\
  &  & & & & 1.922 \\
 128 & 0.0056 & 1.01 & 1.00 & 1.076  \\
   &  & & & & 1.955 \\
 256  & 0.0014 & 1.00 & 1.00 & 1.006\\
  &  & & & & 1.990\\
 512 & 3.5613e-04  & 1.00 & 1.00 & 1.005 \\
   &  & & & & 1.998 \\
 1024  & 8.9185e-05   & 1.00 & 1.00 & 1.001\\
  &  & & & &1.992\\
 2048  & 2.2423e-05  & 1.00 & 1.00  & 1.000\\
 \\
\hline
\end{tabularx}
\caption{Relative error for one soliton solution ($\alpha=1.999$).}
\label{tab_2}
\end{table}
\begin{figure}
    \centering
    \includegraphics[width=0.8\linewidth, height=8cm]{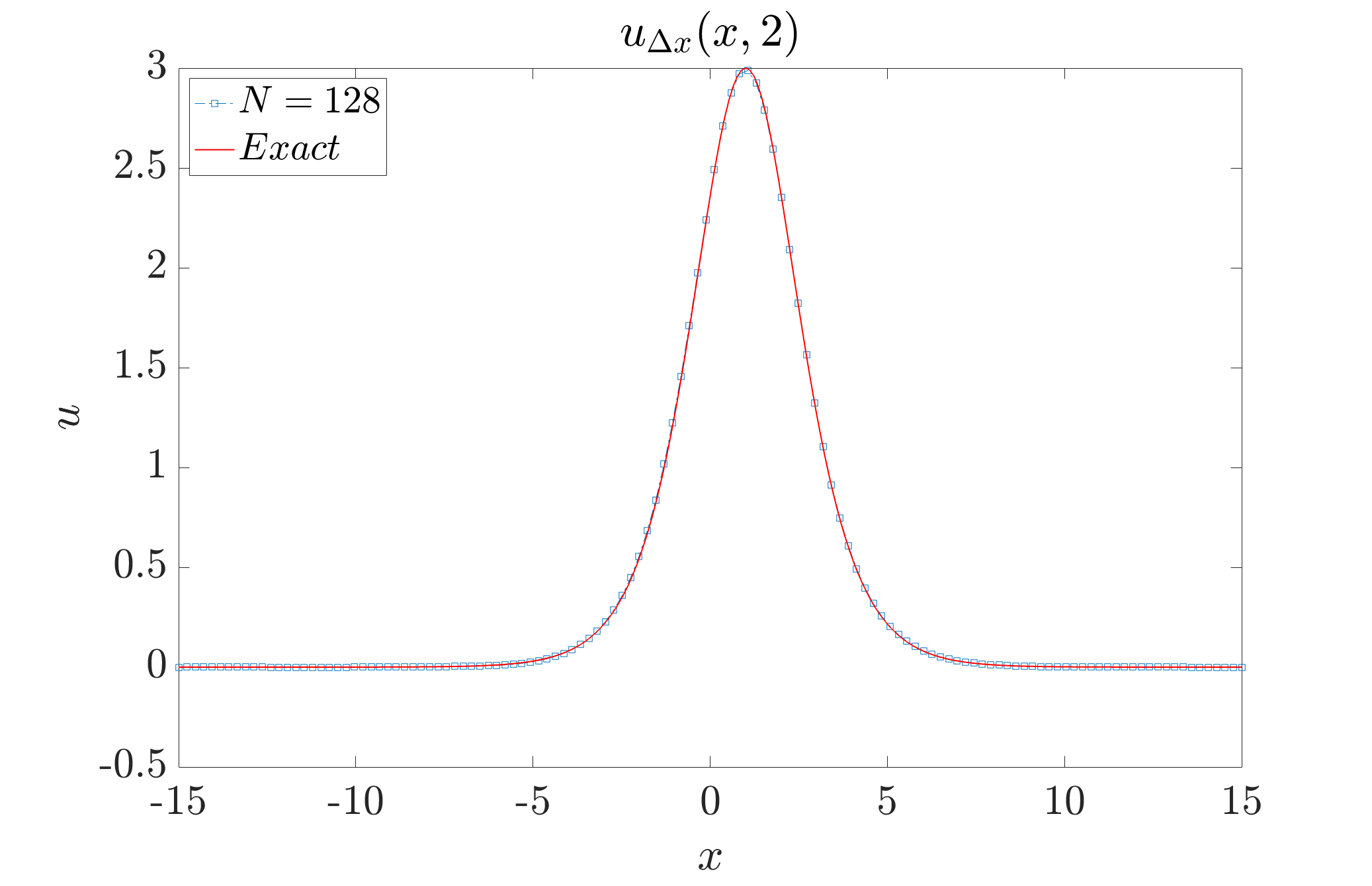}
    \caption{The exact and numerical solution at $t=2$ with the initial data $w_1(x,-1)$ with $N=128$.}
    \label{fig:KDVone}
\end{figure}
\subsubsection{Two soliton solution}
From a physical perspective, solitons of different shapes manifest diverse velocities, establishing a correlation between soliton height and speed. A taller soliton exhibits a greater swiftness compared to a shorter one. As two solitons traverse a surface, the taller soliton surpasses the shorter soliton, and both solitons emerge unaltered after the collision. This scenario presents a considerably more intricate computational challenge than solving for a single soliton solution.
In case of two soliton, the family of exact solution of the KdV equation is given by \cite{dutta2015convergence}
 \begin{equation}\label{twosol}
     u(x,t) = 6(b-a) \frac{b \csch^2\left(\sqrt{b/2}(x-2bt)\right)+a \sech^2\left(\sqrt{a/2}(x-2at)\right)}{\left(\sqrt{a}\tanh\left(\sqrt{a/2}(x-2at)\right) - \sqrt{b} \coth\left(\sqrt{b/2}(x-2bt)\right)\right)^2}.
\end{equation}
for some constants $a$ and $b$. We have considered the parameters $a=0.5$ and $b=1$, and the initial data $u_0(x) = u(x,-10)$.
We computed the approximated solution at time $t = 20$ to compare with the exact solution $u(x, 10)$. The Figure \ref{fig:KdVtwo} represents the exact solution at $t=10$ and numerical solution at $t = 20$. Nevertheless, slight inaccuracies in the placement of the larger bump, stemming from a minor discrepancy in its height, result in a slightly higher velocity for the wave compared to the corresponding wave in the exact solution. Since the wave is relatively narrow, the $L^2$-error assumes significant proportions. The Table \ref{tab_3} demonstrates the relative $L^2$-errors for the two soliton simulation.
\begin{table}
\begin{tabularx}{0.8\textwidth} { 
  >{\raggedright\arraybackslash}X 
  >{\raggedright\arraybackslash}X
   >{\centering\arraybackslash}X 
   >{\raggedleft\arraybackslash}X
   >{\raggedleft\arraybackslash}X
   >{\raggedleft\arraybackslash}X }
 \hline
 N & E  & $C^{\Delta}_1$ & $C^{\Delta}_2$ & $C^{\Delta}_3$ & $R_E$\\
 \hline
 \hline
 \\
 256  & 1.098  & 0.978 & 0.770 & 26.518  \\
  &  &  &  & & 0.57 \\
 512 & 0.741  & 0.988 & 0.889 & 10.788  \\
   &  &  &  & & 1.53 \\
 1024  & 0.257 & 1.001 & 0.934 &3.826\\
  &  &  &  &  &1.88\\
 2048 & 0.070 & 1.001 & 0.955 & 1.722\\
   &  &  &  & &1.98\\
 4096 & 0.018  & 1.000 & 0.969 & 1.042 \\
  &  &  &  &  &\\
\hline
\label{TableTwoKdV}
\end{tabularx}
\caption{Relative error for two soliton solutions ($\alpha = 1.999$)}
\label{tab_3}
\end{table}
\begin{figure}
    \centering
    \includegraphics[width=0.8\linewidth, height=8cm]{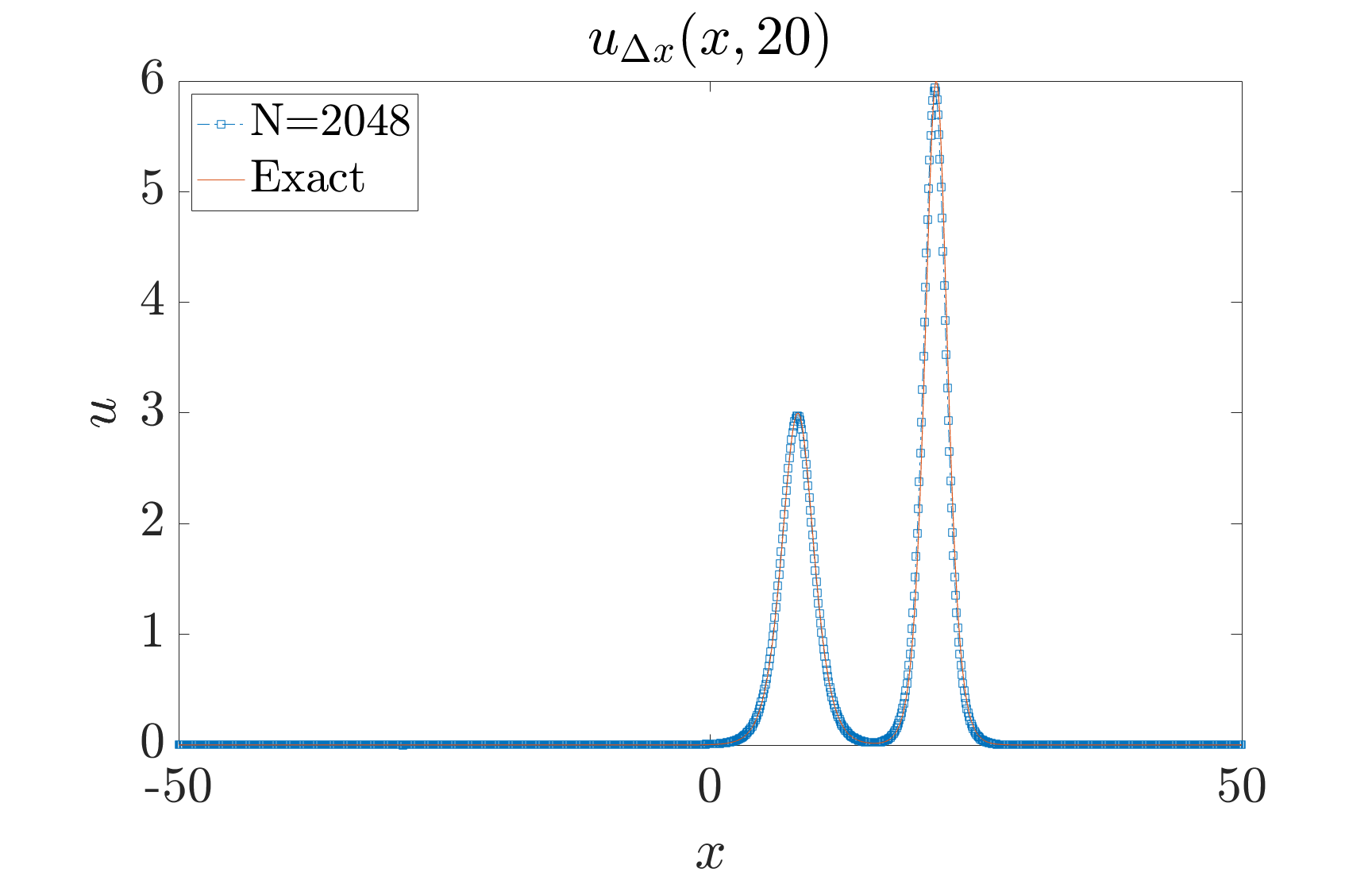}
    \caption{The exact and numerical solution at $t=20$ with the initial data $w_2(x,-10)$ with $N=2048$ $(\alpha=1.99)$}
    \label{fig:KdVtwo}
\end{figure}
\subsection{Fractional case with smooth initial data}($\alpha=1.5$) We test the convergence for the initial condition $u_0(x) = 0.5 \sin(x)$ in the interval $[0,2\pi]$. In the Table \ref{tab_4}, we have analyzed the $L^2$ errors using the approximated solution with $2^{16}$ grid points as a reference solution at time $T = 1$. The relative $L^2$-errors along with the conserved quantities converge at the optimal rates as mesh size decreases.
\begin{figure}
    \centering
    \includegraphics[width=0.8\linewidth, height=8cm]{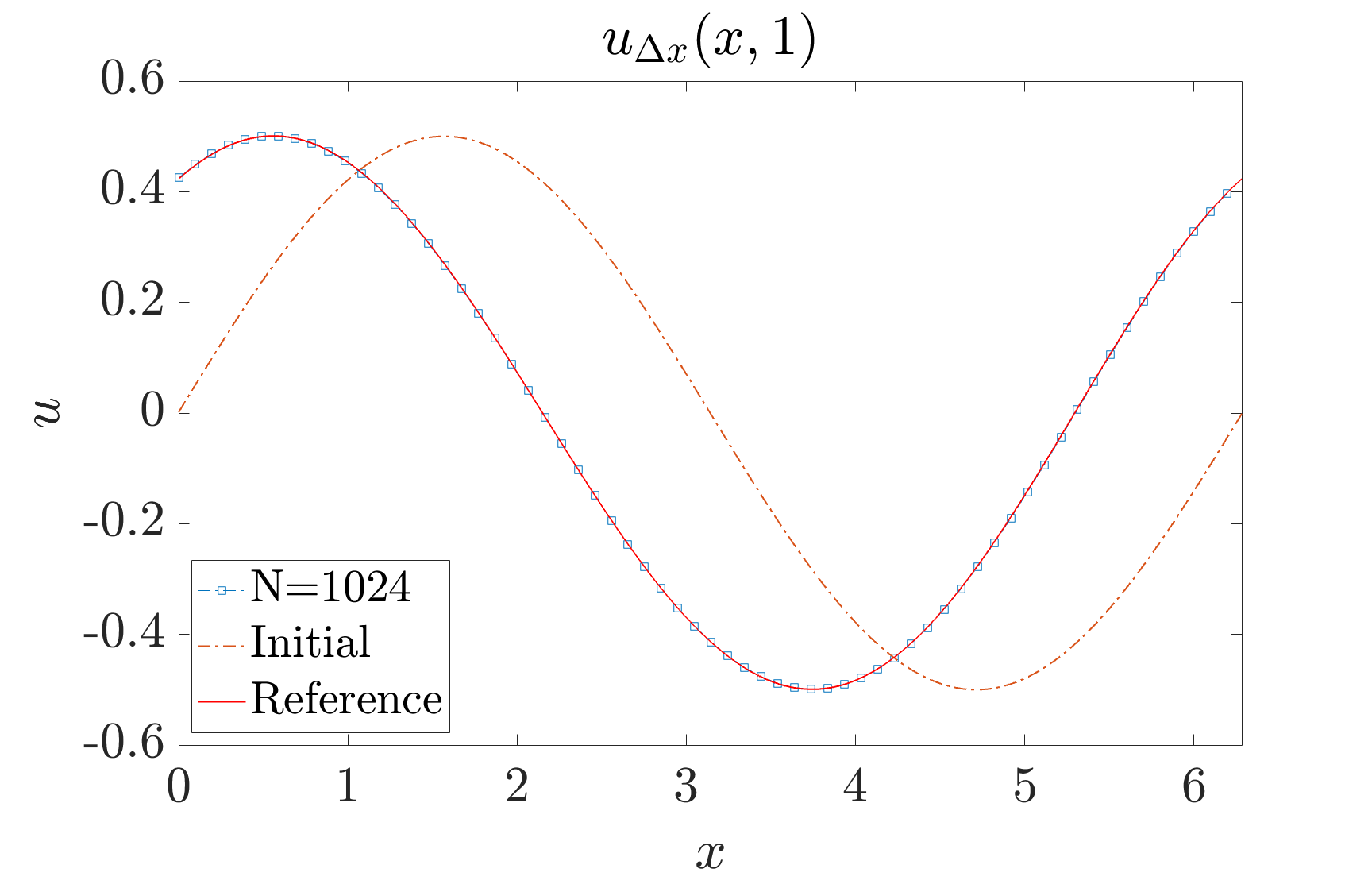}
    \caption{The exact and numerical solution at $t=1$ with the initial data $u_0(x) = 0.5 \sin(x)$ with $N=1024$ grids. $\alpha=1.5$}
    \label{fig:alpha1_5smooth}
\end{figure}
The Figure \ref{fig:alpha1_5smooth} describes that the approximated solution converges to the reference solution even for the fairly coarser grids for the parameter $\alpha=1.5$. 
\begin{table}
\begin{tabularx}{0.8\textwidth} { 
  >{\raggedright\arraybackslash}X 
  >{\raggedright\arraybackslash}X
   >{\centering\arraybackslash}X 
   >{\raggedleft\arraybackslash}X
   >{\raggedleft\arraybackslash}X
   >{\raggedleft\arraybackslash}X }
 \hline
 N & E  & $C^{\Delta}_1$ & $C^{\Delta}_2$ & $C^{\Delta}_3$ & $R_E$\\
 \hline
 \hline
\\
 512  & 0.0011 & 1.001 & 1.000 & 1.001 \\
  &  &  &  & & 1.984\\
 1024 & 0.0003 & 1.001 & 1.000 & 1.001\\
   &  &  &  & & 1.980\\
 2048 & 6.822e-05 & 1.001 & 1.000 & 1.000 \\
  &  &  &  & & 1.994 \\
  4096 & 1.7128e-05  & 1.000 & 0.998 & 1.000 \\
  &  &  & & &2.005\\
  8192 & 4.2688e-06 & 1.000 & 0.999 & 1.000\\
  &  &  &  &  &1.888 \\
  16384 & 1.1531e-06 & 1.000 & 1.000 & 1.000\\
  &  &  &  &  & \\
\hline

\label{Tablealpha15smooth}
\end{tabularx}
\caption{Relative errors for $\alpha =1.5$ with smooth initial data.}
\label{tab_4}
\end{table}
\subsection{Fractional case with non-smooth initial data} ($\alpha=1.5$):
In our final illustration, we test the convergence of our scheme \eqref{CNscheme} for the less regular initial data 
\begin{equation*}
u_0(x) =
    \begin{cases}
         \frac{1}{2} (x+1), & \qquad x\in[-1,1],\\
                  0,& \qquad \text{otherwise}
    \end{cases}
\end{equation*}
in the interval $[-10,10]$. The Figure \ref{fig:alpha1_5L2} depicts the approximated solution for various number of elements. In the Table \ref{tab_5}, we analyze the $L^2$ errors using the approximated solution obtained by \eqref{CNscheme} for the exponent $\alpha=1.5$ with $2^{16}$ grid points as a reference solution at time $T = 0.1$ due to the unavailability of the reliable reference solution. We observe that the relative $L^2$-error is decreasing. However, the large errors and slow convergence in Table \ref{tab_5} indicates that we are not yet in the asymptotic regime. 
\begin{table}
\begin{tabularx}{0.8\textwidth} { 
  >{\raggedright\arraybackslash}X 
  >{\raggedright\arraybackslash}X 
   >{\centering\arraybackslash}X 
   >{\raggedleft\arraybackslash}X
   >{\raggedleft\arraybackslash}X
   >{\raggedleft\arraybackslash}X }
 \hline
 N & E  & $C^{\Delta}_1$ & $C^{\Delta}_2$ & $C^{\Delta}_3$ & $R_E$\\
 \hline
 \hline
 \\
 2048 & 0.4482 & 0.05 & 0.176 & 0.022 \\
   &  &  &  & & 0.192\\
 4096 & 0.3925 & 0.12 & 0.250 & 0.043\\
  &  &  &  & & 0.039 \\
  8192 & 0.3820 & 0.26 & 0.354 & 0.084 \\
  &  &  &  & & 0.162 \\
  16384 & 0.3416 & 0.53 &  0.500 & 0.173\\
  &  &  &  &  & 0.117 \\
  32768 & 0.3150 & 1.08 & 0.707 & 0.333\\
  &  &  &   \\
\hline
\end{tabularx}
\caption{Relative errors for $\alpha =1.5$ with $L^2$ initial data.}
\label{tab_5}
\end{table}
\begin{figure}
    \centering
    \includegraphics[width=0.9\linewidth, height=9cm]{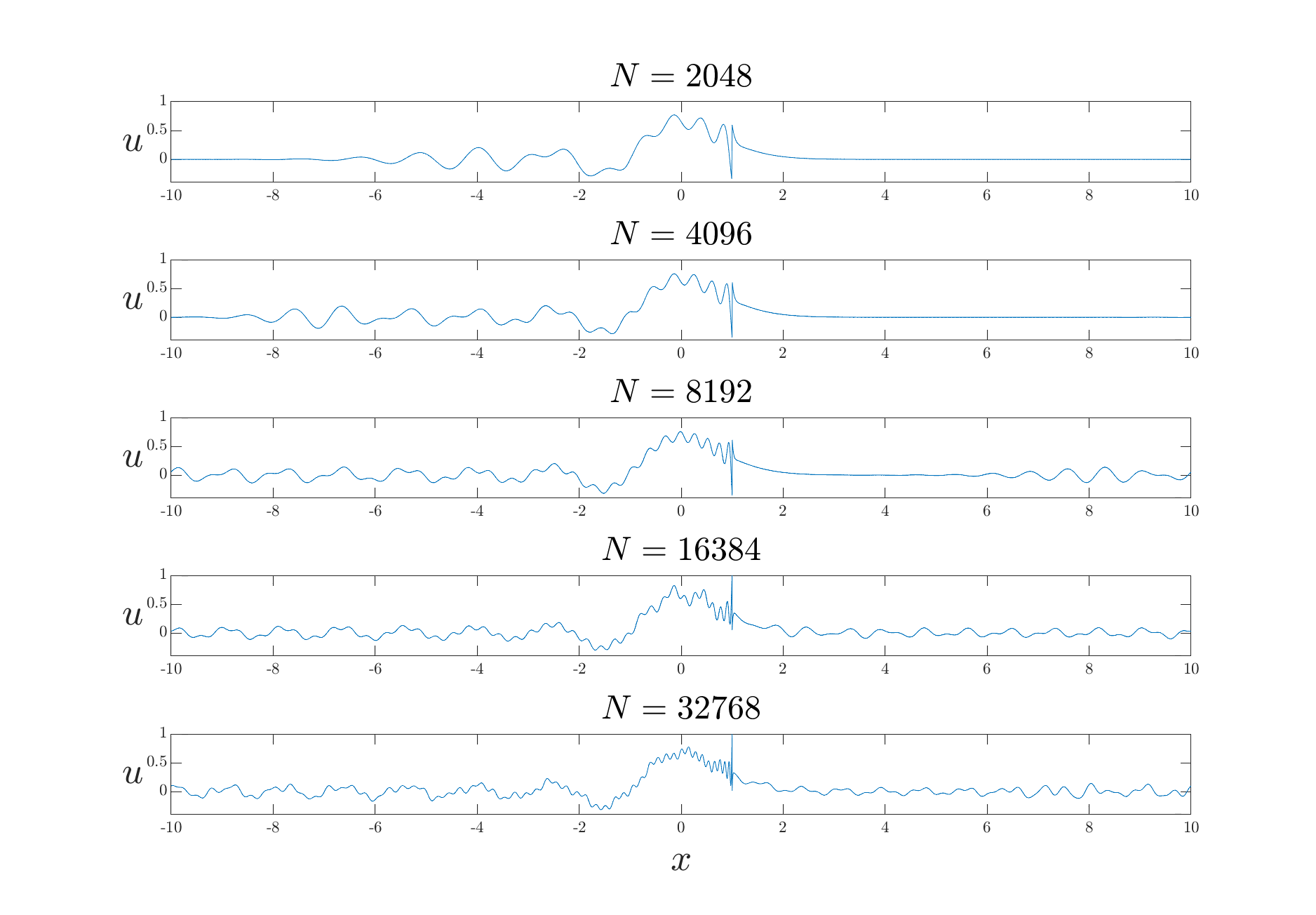}
    \caption{The approximated solution at $t=0.1$ with the $L^2$ initial data $u_0(x)$.}
    \label{fig:alpha1_5L2}
\end{figure}

We have observed that the approximated solutions of the equation \eqref{fkdv}, obtained using the devised scheme \eqref{CNscheme} demonstrates the convergence towards the exact solutions for several values of $\alpha$ numerically. This convergence exhibits an expected rate of $2$, which is consistent with our theoretical results as demonstrated in the earlier sections.
It is important to note that when dealing with non-smooth solutions, the convergence rates do not adhere to the expected behavior, and this inconsistency can be attributed to some of the evident factors, such as reference solution may not be close to the exact solution. However, even though the data is less regular, the conserved quantities are converging but a lower convergence rate. This is expected as we are not yet in an asymptotic regime.

\subsection*{Acknowledgements} 
TS is partially supported by the MATRICS Grant MTR/2021/000810 from the
Science and Engineering Research Board, Government of India.


\begin{thebibliography}{99}
	
\bibitem{Bonaf}
L.  Abdelouhab, J. L.  Bona, M. Felland, and J. -C. Saut.
\newblock{Nonlocal models for nonlinear, dispersive waves.}
\newblock{\em Physica D: Nonlinear Phenomena},  40 (1989), no. 3, 360--392.
	
\bibitem{biccari}
U. Biccari, M. Warma, and E. Zuazua.
\newblock{Local elliptic regularity for the Dirichlet fractional Laplacian.}
\newblock{\em Advanced Nonlinear Studies,}  17 (2017), no. 2, 387--409.

\bibitem{Ciarlet}
P. G. Ciarlet.
\newblock{The Finite Element Method for Elliptic Problems.}
\newblock{\em Society for Industrial and Applied Mathematics},  2002.
	
\bibitem{Hitchhiker}
E. Di Nezza, G. Palatucci, and E. Valdinoci.
\newblock{Hitchhiker's guide to the fractional Sobolev spaces}.
\newblock{\em Bulletin des sciences math{\'e}matiques}, 136 (2012), no. 5, 521--573.
	
\bibitem{RajibBO}
R. Dutta, H. Holden, U. Koley, and N. H. Risebro.
\newblock{Convergence of finite difference schemes for the Benjamin--Ono equation}.
\newblock{\em Numerische Mathematik}, 134 (2016), no. 2, 249--274.
	
\bibitem{dutta2015convergence}
R. Dutta, U. Koley, and N. H. Risebro.
\newblock{Convergence of a higher order scheme for the Korteweg–de Vries equation}.
\newblock{\em SIAM Journal on Numerical Analysis}, 53 (2015), no. 4, 1963--1983.
	
\bibitem{RDOS}
R. Dutta, H. Holden, U. Koley, and N. H. Risebro.
\newblock{Operator splitting for the Benjamin–Ono equation.}
\newblock{\em Journal of Differential Equations}, 259 (2015), no. 11, 6694--6717.

\bibitem{CNKDV}
R. Dutta, and N. H. Risebro.
\newblock{ A note on the convergence of a Crank–Nicolson scheme for the KdV equation}.
\newblock{\em Int. J. Numer. Anal. Model}, 13 (2016), no. 5, 657--675.

\bibitem{TanmayS}
R. Dutta, and T. Sarkar.
\newblock{Operator splitting for the fractional Korteweg-de Vries equation}.
\newblock{\em Numerical Methods for Partial Differential Equations}, 37 (2021), no. 6, 3000--3022.

 \bibitem{BO_ponce}
G. Fonseca, F. Linares, and G. Ponce.
\newblock{ The IVP for the dispersion generalized Benjamin--Ono equation in weighted Sobolev spaces}.
\newblock{\em  Annales de l'Institut Henri Poincar{\'e} C, Analyse non lin{\'e}aire}, 30 (2013), no. 5, 763--790.

\bibitem{CNGALTUNG}
S. T. Galtung.
\newblock{Convergent Crank–Nicolson Galerkin Scheme for the Benjamin–Ono Equation, Master’s thesis,}
\newblock{\em NTNU Norwegian University of Science and Technology},(2016) 
 
\bibitem{CRGALTUNG}
S. T. Galtung.
\newblock{Convergence rates of a fully discrete Galerkin scheme for the Benjamin–Ono equation,}
\newblock{\em XVI International Conference on Hyperbolic Problems: Theory, Numerics, Applications}, Springer, (2016), 589--601.

\bibitem{Galtungc}
S. T. Galtung.
\newblock{A convergent Crank–Nicolson Galerkin scheme for the Benjamin–Ono equation}.
\newblock{\em Discrete and Continuous Dynamical Systems}, 38 (2018), no. 3, 1243--1268.

\bibitem{VeloC}
J. Ginibre, and G. Velo.
\newblock{Commutator expansions and smoothing properties of generalized Benjamin-Ono equations}.
\newblock{\em Annales de l'IHP Physique th{\'e}orique}, 51 (1989), no. 2, 221--229.
 
\bibitem{VeloS}
J. Ginibre, and G. Velo.
\newblock{Smoothing properties and existence of solutions for the generalized Benjamin–Ono equation}.
\newblock{\em Journal of differential equations}, 93 (1991), no. 1, 150--212.

\bibitem{Grubb}
G. Grubb.
\newblock{Fourier methods for fractional-order operators}.
\newblock{\em arXiv preprint arXiv:2208.07175}, (2022).
	
\bibitem{FKDV_WP_L2}
S. Herr, A. D. Ionescu, C. E. Kenig, and H. Koch.
	\newblock{ A para-differential renormalization technique for nonlinear dispersive equations}.
	\newblock{\em Communications in Partial Differential Equations}, 35 (2010), no.10, 1827--1875.

\bibitem{HOLDEN1999}
	H. Holden, K. H. Karlsen, and N. H. Risebro.
	\newblock{Operator Splitting Methods for Generalized Korteweg–De Vries Equations.}
	\newblock{\em Journal of Computational Physics}, 153 (1999), no. 1, 203--222.
 
\bibitem{FullydiscreteKDV}
	H. Holden, U. Koley, and N. H. Risebro.
	\newblock{Convergence of a fully discrete finite difference scheme for the Korteweg–de Vries equation.}
	\newblock{\em IMA Journal of Numerical Analysis}, 35 (2015), no. 3, 1047--1077.

\bibitem{Kenig_wp_BO}
	A. D. Ionescu and C. E. Kenig.
	\newblock{Global well-posedness of the Benjamin–Ono equation in low-regularity spaces.}
	\newblock{\em J. Amer. Math. Soc.}, 20 (2007), 753-798.
 
\bibitem{podlubny1998fractional}
	 Igor Podlubny.
	\newblock{Fractional differential equations: an introduction to fractional derivatives, fractional differential equations, to methods of their solution and some of their applications.}
	\newblock{\em Elsevier}, (1998).

 
\bibitem{KATO}
	T. Kato.
	\newblock{ On the Cauchy problem for the (generalized) Korteweg–de Vries equation.}
	\newblock{\em Studies in Appl. Math. Ad. in Math. Suppl. Stud.}, (1983), no. 8, 93--128.
	
\bibitem{Kenig}
	C. E. Kenig, G. Ponce, L. Vega.
	\newblock{Well-Posedness of the Initial Value Problem for the Korteweg-de Vries Equation.}
	\newblock{\em Journal of the American Mathematical Society}, 4 (1991), no. 2, 323--347.
	
 \bibitem{Visan}
	 R. Killip and M. Vi{\c{s}}an.
	\newblock{KdV is wellposed in $H^{-1}$.}
	\newblock{\em Annals of Mathematics}, 190 (2019), no. 1, 249--305.
	
\bibitem{Saut}
	 C. Klein, and J.-C. Saut.
	\newblock{A numerical approach to blow-up issues for dispersive perturbations of Burgers' equation}.
	\newblock{\em Physica D: Nonlinear Phenomena}, 295 (2015), 46--65.

\bibitem{MonteCarlo}
U. Koley, D. Ray, and T. Sarkar.
\newblock{Multilevel Monte Carlo Finite Difference Methods for Fractional Conservation Laws with Random Data}.
\newblock{\em SIAM/ASA Journal on Uncertainty Quantification}, 9 (2021), no.1, 65--105.

\bibitem{KdV}
	 D. J.   Korteweg,  and  G.   De Vries .
	\newblock{XLI. On the change of form of long waves advancing in a rectangular canal, and on a new type of long stationary waves}.
	\newblock{\em The London, Edinburgh, and Dublin Philosophical Magazine and Journal of Science}, 39 (1895), no. 240, 422--443.

 \bibitem{Pilod}
	F. Linares, D. Pilod, and J.-C. Saut.
	\newblock{ Dispersive perturbations of Burgers and hyperbolic equations I: Local theory}.
	\newblock{\em SIAM Journal on Mathematical Analysis},  46 (2014), no. 2, 1505--1537.

\bibitem{Ponce}
	F. Linares, and G. Ponce.
	\newblock{Introduction to Nonlinear Dispersive Equations}.
	\newblock{\em University text, Springer}, 2014.

\bibitem{10Definition}
M. Kwa{\'s}nicki.
	\newblock{Ten equivalent definitions of the fractional Laplace operator}.
	\newblock{\em Fractional Calculus and Applied Analysis}, 20 (2017), no. 1, 7--51.

\bibitem{molinet2012cauchy}
L. Molinet, and D. Pilod.
\newblock{The Cauchy problem for the Benjamin--Ono equation in $L^2$ revisited}.
\newblock{\em Analysis \& PDE}, 5 (2012), no. 2, 365--395.
	
\bibitem{Vento}
	 L. Molinet, D. Pilod, and S. Vento.
	\newblock{On well-posedness for some dispersive perturbations of
Burgers' equation}.
	\newblock{\em Annales de l'Institut Henri Poincar{\'e} C, Analyse non lin{\'e}aire}, 35 (2018), no. 7, 1719--1756.

\bibitem{NumAppPDE}
A. Quarteroni, and A. Valli.
\newblock{Numerical approximation of partial differential equations}.
\newblock{\em Springer Science \& Business Media}, Vol. 23, 2008.

\bibitem{SJOBERG}
	A. Sj{\"o}berg.
	\newblock{On the Korteweg-de Vries equation: existence and uniqueness}.
	\newblock{\em Journal of Mathematical Analysis and Applications}, 29 (1970), no.3, 569--579.
	
\bibitem{StabOnProj}
O. Steinbach.
\newblock{On the stability of the $L^2$ projection in fractional Sobolev spaces}.
\newblock{\em Numerische Mathematik}, 88 (2001), no. 2, 367--379.
	
\bibitem{TaoBO}
	T. Tao.
	\newblock{Global well-posedness of the Benjamin–Ono equation in $H^1(\mathbb{R})$}.
	\newblock{\em Journal of Hyperbolic Differential Equations}, 1 (2004),  no. 1, 27--49.
	
\bibitem{Vmurty}
	V. Thom{\'e}e, and A. S. Vasudeva Murthy.
	\newblock{A numerical method for the Benjamin–Ono equation}.
	\newblock{\em BIT Numerical Mathematics}, 38 (1998), 597--611.
\end{thebibliography}
\end{document}